\documentclass{amsart}

\usepackage{amsmath}
\usepackage{amsfonts}
\usepackage{amssymb}
\usepackage{graphicx}
\usepackage{hyperref}
\usepackage{mathrsfs}
\usepackage{pb-diagram}
\usepackage[all]{xy}
\usepackage{color}
\usepackage{caption}
\usepackage{subcaption}

\newtheorem{theorem}{Theorem}[section]
\newtheorem{lemma}[theorem]{Lemma}

\newtheorem{prop}[theorem]{Proposition}
\newtheorem{defn}[theorem]{Definition}

\newtheorem{remark}[theorem]{Remark}

\numberwithin{equation}{section}

\newcommand{\Z}{\mathbb{Z}}
\newcommand{\R}{\mathbb{R}}
\newcommand{\C}{\mathbb{C}}

\newcommand{\bP}{\mathbb{P}}
\newcommand{\bT}{\mathbb{T}}

\newcommand{\cP}{\mathcal{P}}

\newcommand{\Conv}{\mathrm{Conv}}

\newcommand{\consti}{\mathbf{i}\,}

\newcommand{\bS}{\mathbb{S}}
\newcommand{\cA}{\mathcal{A}}

\begin{document}

\title{Affine elliptic surfaces with type-A singularities and orbi-conifolds}
\author[Lau]{Siu-Cheong Lau}
\address{Department of Mathematics and Statistics\\ Boston University}
\email{lau@math.bu.edu}

\begin{abstract}
Following the work of Castano-Bernard and Matessi on conifold transition in the Gross-Siebert program, we construct orbi-conifold transitions of the Schoen's Calabi-Yau threefold and their mirrors.  The construction glues together the local models for orbi-conifold transitions in the previous work with Kanazawa.
\end{abstract}

\maketitle

\section{Introduction}

Conifold transition is an important topic in the study of Calabi-Yau geometries and string theory.  By the discoveries of \cite{Reid,GH,Wang}, the moduli spaces of three-dimensional CY complete intersections in a product of projective spaces are connected by conifold transitions.  It was found by Friedman \cite{Friedman} and Smith-Thomas-Yau \cite{STY} that there exists non-trivial topological obstruction for conifold transition.  In the direction of mirror symmetry, HMS for the local resolved and deformed conifolds was proved by Chan-Pomerleano-Ueda \cite{CPU}.  The mirror of the Atiyah flop was constructed in a joint work of the auther with Fan-Hong-Yau \cite{FHLY} using a noncommutative mirror of the conifold.

More generally geometric transitions involve singularities of deeper levels than conifolds.   Singularities worse than conifolds may occur when 
several Lagrangian spheres vanish simultaneously.
For instance, generalized conifolds and orbifolded conifolds are two natural generalizations of the conifold.  In \cite{AKLM}, Aganagic-Karch-L\"ust-Miemiec deduced that these two classes of singularities are mirror to each other by gauge-theoretical methods.  In a joint work of the author with Kanazawa \cite{KL} mirror symmetry for these local singularities was realized using the SYZ program.  SYZ for general local Gorenstein singularities was derived in a previous work of the author \cite{L13} using the Lagrangian fibrations constructed by Gross \cite{Gross-eg} and Goldstein \cite{Goldstein}.  

A natural question is how to realize these geometric transitions and mirror symmetry in the compact setting.
For compact manifolds, Castano-Bernard and Matessi \cite{CM2} made a beautiful construction of conifold transitions and their mirrors using a symplectic version of the Gross-Siebert program that they developed earlier \cite{CM1}.  They studied the local affine geometries of the base of Lagrangian fibrations for local conifold transitions, and glued the local models of Lagrangian fibrations to a global geometry.

In this paper we construct compact generalized and orbifolded conifolds by extending the method of \cite{CM2}.  We construct the local affine geometries modeling the base of the SYZ fibrations on generalized and orbifolded singularities (and also their smoothings and resolutions).  Then we formulate global geometries that contain both of these singularities.  We call them to be orbi-conifolds.  

The discriminant locus of the global structure naturally contains orbifolded edges and orbifolded positive or negative vertices.  It gives an orbifold generalization of simple and positive tropical manifolds in the the Gross-Siebert program.

The Schoen's Calabi-Yau threefold \cite{Schoen} provides an excellent source of examples for orbi-conifolds.  The threefold is a resolution of the fiber product of two rational elliptic surfaces over the base $\bP^1$.  \cite{CM2} constructed compact conifolds which are degenerations of the Schoen's CY and their mirrors by using fan polytopes of toric blow-ups of $\bP^2$.   In this paper we treat all the reflexive polygons uniformly and construct compact orbi-conifolds and their mirrors.  The result is the following.

\begin{theorem}[Orbi-conifold degeneration of Schoen's CY] \label{thm:main}
Each pair of reflexive polygons $(P_1,P_2)$ (\emph{where $P_1$ and $P_2$ are not necessarily dual to each other}) corresponds to an orbifolded conifold degeneration $O^{(P_1,P_2)}$ of a Schoen's Calabi-Yau threefold, and also corresponds to a generalized conifold degeneration $G^{(P_1,P_2)}$ of a mirror Schoen's Calabi-Yau.  A resolution of $O^{(P_1,P_2)}$ is mirror to a smoothing of $G^{(\check{P}_1,\check{P}_2)}$ in the sense of Gross-Siebert, and vice versa.  ($\check{P}$ denotes the dual polygon of $P$.)
\end{theorem}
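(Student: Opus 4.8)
\medskip
\noindent\emph{Proof strategy.} The plan is to build the global geometry by gluing the local affine models of the previous sections --- following the blueprint of Castano-Bernard--Matessi \cite{CM2}, but with their conifold local models replaced by the orbi-conifold and generalized-conifold local models of \cite{KL} --- and then to obtain the mirror statement by tracking the discrete Legendre transform. The argument divides into four steps: (1) attach to each reflexive polygon $P$ an affine elliptic surface $B_P$, an integral affine $2$-sphere with type-$A$ singularities and a compatible polarization, and show that its discrete Legendre dual is $B_{\check P}$; (2) from $B_{P_1}$ and $B_{P_2}$ construct a positive simple (in the orbifold sense) integral affine $3$-manifold $B^{(P_1,P_2)}$ whose discriminant contains orbifolded edges and orbifolded vertices, together with its generalized-conifold counterpart $B^{(P_1,P_2)}_{\mathrm{gen}}$; (3) apply the (symplectic) Gross-Siebert reconstruction to these affine manifolds and to their small resolutions to produce $O^{(P_1,P_2)}$, $G^{(P_1,P_2)}$, a resolution of $O^{(P_1,P_2)}$, and the corresponding families; (4) identify $\WH{B^{(P_1,P_2)}}$ with $B^{(\check P_1,\check P_2)}_{\mathrm{gen}}$ and Legendre-dualize the resolution to get the mirror.

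For step (1) the key input is the classical identity $|\partial P\cap\Z^2|+|\partial\check P\cap\Z^2|=12$. We build $B_P\cong S^2$ by inserting, for each edge $e$ of $P$ of lattice length $\ell_e$, a type-$A_{\ell_e-1}$ cluster of focus--focus points (modelling an $I_{\ell_e}$ Kodaira fibre), and, for each vertex of $P$ --- equivalently each edge $\check e$ of $\check P$ --- a type-$A_{\ell_{\check e}-1}$ cluster; the local monodromies then multiply to the identity, so the affine structure extends over $S^2$, and the total contribution to the Euler number is $|\partial P\cap\Z^2|+|\partial\check P\cap\Z^2|=12$, matching a rational elliptic surface. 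We equip $B_P$ with the polyhedral decomposition and multivalued piecewise-linear polarization determined by $P$. Since the discrete Legendre transform interchanges the edge clusters of $B_P$ with its vertex clusters --- this being the toric $P\leftrightarrow\check P$ duality as seen on the affine base --- one obtains $\WH{B_P}\cong B_{\check P}$; this is the $2$-dimensional case of the theorem and the combinatorial core of the mirror statement.

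For step (2), recall from \cite{CM2} that a Schoen threefold $S_1\times_{\bP^1}S_2$ admits a Lagrangian $3$-torus fibration whose integral affine base is assembled from $B_{P_1}$ and $B_{P_2}$ along the common $\bP^1$-direction. To pass to the degeneration we slide the singular fibres of $S_1$ onto those of $S_2$: when an $I_m$-fibre of $S_1$ and an $I_n$-fibre of $S_2$ lie over a common point of $\bP^1$, the local model of the threefold is exactly the orbi-conifold local model of \cite{KL}, and in affine terms the two type-$A$ discriminant clusters cross along the corresponding circle precisely as in the local affine model of the earlier sections; where such a cluster meets the rest of the discriminant of the other factor one obtains the orbifolded positive and negative vertices. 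Carrying this out for all collisions prescribed by $(P_1,P_2)$ yields $B^{(P_1,P_2)}$, and replacing each orbi-conifold block by the generalized-conifold block yields $B^{(P_1,P_2)}_{\mathrm{gen}}$. The main obstacle lies here: one must verify that these glued data satisfy the orbifold generalizations of the positivity and simplicity conditions introduced above --- in particular that the monodromy representation around the whole discriminant is globally well defined and that the wall structures coming from the two elliptic factors are mutually compatible --- since these are exactly the hypotheses of the reconstruction theorem.

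Granting this, step (3) applies the reconstruction theorem, respectively the symplectic construction of \cite{CM1,CM2}, to $B^{(P_1,P_2)}$ and $B^{(P_1,P_2)}_{\mathrm{gen}}$, giving $O^{(P_1,P_2)}$ and $G^{(P_1,P_2)}$ with general fibres a Schoen and a mirror Schoen Calabi--Yau; a small resolution of each orbifolded edge, which is a purely local modification of the discriminant graph, produces a positive simple affine $3$-manifold $\WT B$, hence a resolution $\WT X$ of $O^{(P_1,P_2)}$. Step (4) then combines three interchanges: by step (1) the two $2$-dimensional factors dualize to $B_{\check P_1}$ and $B_{\check P_2}$; by the local result of \cite{KL} (the SYZ form of \cite{AKLM}) each orbi-conifold block dualizes to a generalized-conifold block; and the discrete Legendre transform interchanges the small-resolution modification of the discriminant with the smoothing one. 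Together these identify $\WH{\WT B}$ with the positive simple affine $3$-manifold underlying a smoothing of $G^{(\check P_1,\check P_2)}$, while the reconstruction on $\WH{\WT B}$ is, by construction, the SYZ mirror of $\WT X$; the symmetric statement follows by exchanging $O$ with $G$ and $P_i$ with $\check P_i$. The remaining subtlety is that these three interchanges be simultaneously compatible along the glued discriminant, which once more rests on the orbifold-simple and positive hypotheses established in step (2).
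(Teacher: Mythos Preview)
Your overall strategy --- build the two-dimensional pieces from $P$, assemble a three-dimensional affine base, realize the orbi-conifold and its mirror via local models plus Legendre duality --- matches the paper's. But the way you set up the two-dimensional building blocks, and hence the three-dimensional gluing, diverges from the paper in a way that leaves a real gap.

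In the paper, the affine object attached to a reflexive polygon $P$ is a \emph{disc} $\cA'_P$ (or $\cA_P$), not a sphere: its boundary is an affine circle of length $L=|\partial P\cap\Z^2|$, encoding the elliptic fibre at infinity (Proposition~\ref{prop:not-conv}). There are in fact two such discs, $\cA_P$ and $\cA'_P$, with the inner/outer roles of the singular points swapped, and the Legendre dual statement is that $\cA_P$ is dual to $\cA'_{\check P}$, not that a single closed $B_P$ is dual to $B_{\check P}$. The three-dimensional base is then obtained very concretely as a Heegaard splitting of $S^3$: one glues the solid tori $\cA'_{P_1}\times(\R/L_2\Z)$ and $(\R/L_1\Z)\times\cA'_{P_2}$ along their common boundary torus $(\R/L_1\Z)\times(\R/L_2\Z)$. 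The orbifolded conifold points are then precisely the $m_1\cdot m_2$ crossings of the outer discriminant circles living in that boundary torus, and the polyhedral decomposition is written down explicitly in terms of the prisms $\hat P^{(r)}_k$ and cubes $C_{ij}$. The generalized conifold $G^{(\check P_1,\check P_2)}$ is the Legendre dual, built the same way from $\cA_{\check P_r}$.

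Your $B_P\cong S^2$ loses exactly the boundary circle that makes this Heegaard gluing possible; ``assembled from $B_{P_1}$ and $B_{P_2}$ along the common $\bP^1$-direction'' is not a construction, and there is no evident way to produce a polarized integral-affine $S^3$ from two closed affine $S^2$'s. Likewise, ``sliding the singular fibres of $S_1$ onto those of $S_2$'' is good intuition for why orbi-conifold points appear, but in the paper nothing is slid: the orbi-conifold points are already present as the intersections of the outer discriminant lines in the gluing torus, with $(k,l)$ read off from the affine lengths of the corresponding edges of $P_1,P_2$. Finally, the paper proves the mirror statement not by checking an abstract compatibility of ``three interchanges'' but by explicitly writing down affine smoothings of $O^{(P_1,P_2)}$ and $G^{(P_1,P_2)}$ (using the smoothings $\tilde\cA'_{P_r}$ and $\tilde\cA_{P_r}$ of the surface pieces) and observing that their Legendre duals are resolutions on the other side. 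Replacing your closed $B_P$ by the disc $\cA'_P$ with its affine-circle boundary, and your vague assembly by the explicit solid-torus gluing, would bring your outline in line with the actual proof.
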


The connections between Calabi-Yau geometries and affine geometries play a key role.   Gross \cite{Gross-topo} found a beautiful topological realization of mirror symmetry using affine geometries with polyhedral decompositions.
   
Independently, Haase-Zharkov \cite{HZ1,HZ2,HZ3} found a brilliant construction of affine structures on spheres by using a pair of dual reflexive polytopes.  They are useful to study the geometries of Calabi-Yau complete intersections.

Based on these pioneering works, Gross-Siebert \cite{GS1,GS2,GS07} developed their celebrated program of toric degenerations and formulated an algebraic reconstruction of mirror pairs.  A symplectic version of the reconstruction was found by Castano-Bernard and Matessi \cite{CM1}.  The construction in this paper is a natural generalization of the work of \cite{CM2}.

To construct orbi-conifold degenerations of the Schoen's CY, first we study degenerations of rational elliptic surfaces via affine geometry.  

The connection between affine surfaces and symplectic four-folds is very well-understood and dates back to the early works of Symington \cite{Symington} and Leung-Symington \cite{LS}.  They defined the notion of an almost toric fourfold, which is a symplectic fourfold with a Lagrangian fibration under certain topological constraints.  They classified almost toric fourfolds by using the affine base of Lagrangian fibrations.

Rational elliptic surfaces form a subclass of almost toric fourfolds and hence are well understood by \cite{Symington,LS}.  Here we concern about singular surfaces with $A_n$-orbifold singularities, which will be important to the construction of orbi-conifolds.  We construct the following.

\begin{prop}[Rational elliptic surfaces with singularities] \label{thm:ell-intro}
Each reflexive polygon $P$ corresponds to two rational elliptic surfaces $S_P$ and $S_{P}'$ with type $A$ singularities, where the configurations of singularities depend on the integral properties of $P$.  $S_P - D_{S_P}$ and $S_{\check{P}}' - D_{S_{\check{P}}'}$ are mirror to each other in the sense that they are discrete Legendre dual to each other, where $D$ denotes an anti-canonical divisor.
\end{prop}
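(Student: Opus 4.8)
The plan is to carry out the whole construction on the affine side first, using the almost toric technology of Symington and Leung--Symington \cite{Symington,LS}, and only afterwards read off the Calabi--Yau geometry and the mirror correspondence.

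\emph{Step 1 (the affine base).} Fix a reflexive polygon $P\subset\R^2$ with polar dual $\check P$. First I would produce an integral affine surface with singularities $B_P$, a polyhedral decomposition $\mathscr P_P$, and a multivalued strictly convex piecewise linear function $\varphi_P$ on $(B_P,\mathscr P_P)$. Topologically $B_P$ is $P$; its integral affine structure is the standard one away from a finite collection of focus--focus points and a finite collection of $A_n$-orbifold cone points. The focus--focus points are introduced by nodal trades in the sense of \cite{Symington} at the corners of $P$, while the non-unimodular corners are retained as $A_n$-orbifold vertices; the discriminant is arranged so that, after resolution, the associated elliptic fibration acquires Kodaira fibers of type $I_n$ (i.e.\ type $A$), with the multiplicities read off from the edge lattice lengths of $P$ and of $\check P$. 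For $\mathscr P_P$ I would take the central subdivision coning the interior lattice point of $P$ to the faces of $P$, and for $\varphi_P$ the multivalued piecewise linear function on $\mathscr P_P$ whose Newton polytope is $\check P$. One then checks that the local models glue, that $\varphi_P$ is strictly convex, and that $(B_P,\mathscr P_P,\varphi_P)$ is simple and positive in the orbifold-generalized sense developed in this paper --- this is where the orbifolded positive and negative vertices of the introduction enter.

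\emph{Step 2 (the rational elliptic surface).} Applying the orbifold version of the Symington--Leung--Symington construction to $(B_P,\mathscr P_P,\varphi_P)$ produces an almost toric symplectic $4$-orbifold $S_P$ with a Lagrangian fibration $S_P\to B_P$. I would then identify $S_P$ as a rational elliptic surface with type-$A$ singularities: the minimal resolution $\WT{S_P}$ has $\chi(\WT{S_P})=|\partial P\cap\Z^2|+|\partial\check P\cap\Z^2|=12$ by the twelve-point theorem for reflexive polygons, so $\WT{S_P}\to\bP^1$ is elliptic; $-K$ is the class of the elliptic fiber; the singular fibers (and the $A_n$ points of $S_P$) are of type $A$, with their configuration determined by the integral geometry of $P$; and $D_{S_P}$, the preimage of $\partial B_P$, is the distinguished cycle-of-rational-curves anticanonical fiber. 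Define $S_P'$ to be the rational elliptic surface produced in the same manner from the discrete Legendre dual of the datum $(B_{\check P},\mathscr P_{\check P},\varphi_{\check P})$ attached to the (also reflexive) polygon $\check P$; since $\check{\check P}=P$, its underlying polygon is again $P$, and since the discrete Legendre transform preserves all the relevant structure, $S_P'$ is likewise a rational elliptic surface with type-$A$ singularities governed by $P$.

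\emph{Step 3 (discrete Legendre duality).} By construction $S_{\check P}'$ realizes the discrete Legendre transform of the datum attached to $\check{\check P}=P$, that is, of $(B_P,\mathscr P_P,\varphi_P)$ itself. By the surface case of the Gross and Haase--Zharkov constructions, passing to this dual datum replaces the underlying polygon $P$ by $\check P$ and interchanges the roles of the polyhedral decomposition and the piecewise linear function, so $S_P$ and $S_{\check P}'$ realize a discrete-Legendre-dual pair of affine data. Removing the anticanonical divisors replaces $B_P$ and its dual by their interiors, which are the non-compact (log Calabi--Yau) affine surfaces on which the discrete Legendre transform is a genuine involution, free of the slab subtleties at the boundary. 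Hence $S_P-D_{S_P}$ and $S_{\check P}'-D_{S_{\check P}'}$ are discrete Legendre dual, and the symmetry of the construction gives the converse.

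\emph{Main obstacle.} The real work lies in Step 1 and in the identification part of Step 2. First, one must arrange the nodal trades and the $A_n$-orbifold cone points so that the local models are mutually compatible and glue into a simple, positive integral affine manifold with singularities in the orbifold-generalized sense, which requires setting up the theory of orbifolded positive and negative vertices. Second, one must prove that the symplectic $4$-orbifold output by the almost toric machinery really is the asserted rational elliptic surface with exactly the claimed list of type-$A$ singular fibers --- i.e.\ match $\chi$, $c_1$, the fiber class, and the Kodaira types. The remaining bookkeeping, translating the edge lattice lengths of $P$ and $\check P$ into the explicit list of $A_{n_i}$'s, is routine.
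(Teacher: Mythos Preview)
Your sketch diverges from the paper's construction in a way that creates a genuine gap.

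First, the affine base is not $P$ itself. The paper builds two distinct integral affine discs $\cA_P$ and $\cA'_P$ by an explicit gluing (Proposition~\ref{prop:not-conv}): $\cA_P$ is the fan triangulation of $P$ into triangles $\Conv\{0,v_i,v_{i+1}\}$ with a parallelogram $\Conv\{0,v_i,v_{i-1}-v_i,v_{i-1}\}$ attached along each outer edge $\{v_{i-1},v_i\}$; $\cA'_P$ is $P$ with the same parallelograms attached along its edges. The parallelograms, together with the prescribed fan structures at the vertices $v_i$, push the boundary out to a smooth affine circle and place \emph{all} affine $A_{k-1}$ singularities in the interior --- one ``inner'' family and one ``outer'' family, with multiplicities equal respectively to the edge lengths and the vertex orders $\det(u_{i-1},u_i)$ of $P$. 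Your plan of performing nodal trades at some corners while ``retaining non-unimodular corners as $A_n$-orbifold vertices'' leaves singularities on $\partial B_P$ and misses the inner family entirely; the preimage of the boundary is then not the smooth symplectic torus the paper obtains. In particular $D_{S_P}$ is a smooth elliptic curve here, not a cycle of rational curves --- the paper explicitly contrasts this with the Gross--Hacking--Keel setting.

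Second, your definition of $S_P'$ is circular for the purpose at hand: you declare $S_P'$ to be what arises from the Legendre dual of the $\check P$ datum, so the mirror claim for $(S_P,S'_{\check P})$ becomes a tautology and $S_P'$ is never exhibited intrinsically from $P$. In the paper $S_P$ and $S_P'$ come from the two genuinely different bases $\cA_P$ and $\cA'_P$ above (related by an inversion that swaps inner and outer singularities), and the mirror statement is the direct verification that $\cA_P\setminus\partial\cA_P$ and $\cA'_{\check P}\setminus\partial\cA'_{\check P}$ are discrete Legendre dual (Proposition~\ref{thm:ell}).
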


The topology of elliptic surfaces is closely related to the `12 Property' for a reflexive polygon $P$.  Namely the sum of affine lengths of edges and orders of vertices of $P$ must be $12$.  They correspond to the number of singular fibers (counted with multiplicities) of a rational elliptic surface. 

Indeed the `12 Property' holds for more general objects called legal loops \cite{PR}.  We find that they correspond to topological (which may not be Lagrangian) torus fibrations $M$ over $\bS^2$ which have been extensively studied by \cite{Matsumoto,Iwase}.  The multiple of $12$ corresponds to $-3\sigma/2  = - p_1 /2$ where $\sigma$ is the signature and $p_1$ is the first Pontryagin number of the real four-fold $M$.

The organization is as follows.  We review the SYZ construction for the related local singularities in Section \ref{sec:SYZ}.  Then we construct the local affine geometries modeling the singularities in Section \ref{sec:loc-aff}.  In Section \ref{sec:surf} we focus on the relation between rational elliptic surfaces and the `12' Property for polygons.  In Section \ref{sec:Schoen} we formulate the notion of orbi-conifold and construct orbi-conifold transitions of the Schoen's CY.

\subsection*{Acknowledgment}
The author is grateful to Ricardo Casta\~no-Bernard for introducing his beautiful joint work with Diego Matessi on Schoen's Calabi-Yau threefold.  He expresses his gratitude to Atsushi Kanazawa for useful discussions and comments.  He appreciates for the continuous encouragement of Naichung Conan Leung and Shing-Tung Yau.

\section{A quick review on the SYZ mirrors for local orbifolded conifolds} \label{sec:SYZ}

In \cite{KL}, we studied SYZ mirror symmetry for the local generalized conifold $G_{k,l}$ and orbifolded conifold $O_{k,l}$, following the construction of \cite{auroux07,CLL,AAK}.  For $k=l=1$, it reduces to mirror symmetry for the local conifold (which is self-mirror).  In this section we recall the geometries of $G_{k,l}$ and $O_{k,l}$ and their SYZ mirror symmetry.

\subsection{Lagrangian fibrations and SYZ mirrors}

\begin{defn}
A local generalized conifold is given by the equation 
$$xy=(1+z)^k(1+w)^l$$ 
in $\C^4$.  
\end{defn}

It is a toric Gorenstein singularity whose fan is the three-dimensional cone spanned by the primitive vectors $(0,0,1),(k,0,1),(0,1,1),(l,1,1)$.  In other words it is a cone over the trapezoid with vertices $(0,0,1),(k,0,1),(0,1,1),(l,1,1)$ contained in the affine plane in height 1.
When $k,l \geq 2$, the set of singularities is the union of $\{x=y=0,z=-1\}$ and $\{x=y=0,w=-1\}$; when $k=1$ and $l\geq 2$, the set of singularities is $\{x=y=0,w=-1\}$; when $k=l=1$, the point $x=y=1+z=1+w=0$ is the only singularity.  

A smoothing is given by changing the polynomial $(1+z)^k(1+w)^l$ (while keeping the highest order) such that its zero set contains no critical point.  A toric crepant resolution is given by subdividing the trapezoid into standard lattice triangles whose areas achieve the smallest value $1/2$.  The resolution is small in the sense that the exceptional locus is a rational curve.
For the purpose of SYZ mirror symmetry, we remove the anti-canonical divisor $\{zw=0\}$ and denote the resulting space as $G_{k,l}$.  Its smoothing and resolution are denoted as $\tilde{G}_{k,l}$ and $\hat{G}_{k,l}$ respectively.

\begin{defn}
A local orbifolded conifold is given by the equations 
$$u_1v_1=(1+z)^k, \, u_2v_2=(1+z)^l$$
in $\C^5$.  
\end{defn}

It is another toric Gorenstein singularity whose fan is the three-dimensional cone spanned by the primitive vectors $(0,0,1),(k,0,1),(0,l,1),(k,l,1)$.  In other words it is a cone over the corresponding rectangle in the affine plane in height 1.
When $k,l \geq 2$, the set of singularities is the union of $\{u_1=v_1=0,z=-1\}$ and $\{u_2=v_2=0,z=-1\}$; when $k=1$ and $l\geq 2$, the set of singularities is $\{u_2=v_2=0,z=-1\}$; when $k=l=1$, the point $u_1=v_1=u_2=v_2=1+z=0$ is the only singularity.  

A smoothing is given by changing the polynomials $(1+z)^k$ and $(1+z)^l$ such that they do not have multiple roots.  A toric crepant resolution is given by subdividing the rectangle into standard lattice triangles.  We remove the anti-canonical divisor $\{z=0\}$ and denote the resulting space by $O_{k,l}$.  Its smoothing and resolution are denoted as $\tilde{O}_{k,l}$ and $\hat{O}_{k,l}$ respectively.

In \cite{KL} we showed the following.

\begin{theorem}[\cite{KL}] \label{localSYZ}
The local generalized conifold $G_{k,l}$ is SYZ mirror to the orbifolded conifold $O_{k,l}$.  Namely, the deformed generalized conifold $\tilde{G}_{k,l}$ is SYZ mirror to the resolved orbifolded conifold $\hat{O}_{k,l}$; the resolved generalized conifold $\hat{G}_{k,l}$ is SYZ mirror to the deformed orbifolded conifold $\tilde{O}_{k,l}$.  It is summarized by the following diagram.
$$
\xymatrix{
\tilde{G}_{k,l}\ar@{<->}[d]_{SYZ} &\ar@{~>}[l] G_{k,l} \ar@{<->}[d]^{SYZ} & \ar@{->}[l]  \hat{G}_{k,l} \ar@{<->}[d]^{SYZ}  \\
\hat{O}_{k,l} \ar@{->}[r] & O_{k,l}\ar@{~>}[r] & \tilde{O}_{k,l}. 
}
$$
\end{theorem}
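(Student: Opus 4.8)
The plan is to follow the SYZ recipe of \cite{auroux07,CLL,AAK}, producing the mirror of each of the six spaces as the moduli space of pairs (Lagrangian torus fibre, flat $U(1)$-connection), corrected by holomorphic disc counts; the resulting mirror relation refines the gauge-theoretic prediction of \cite{AKLM}. First I would write down the Lagrangian torus fibrations explicitly. Since $G_{k,l}$ and $O_{k,l}$ are toric Gorenstein threefolds with an anti-canonical divisor removed, both admit a fibration of Gross--Goldstein type \cite{Gross-eg,Goldstein,L13}, obtained by combining part of the toric moment map with the map $|W|$, where $W$ defines the removed anti-canonical divisor (or a deformation of it, so as to realize the smoothings); the crepant resolutions $\hat{G}_{k,l},\hat{O}_{k,l}$ carry the fibrations coming from the subdivision of the trapezoid/rectangle, while the smoothings $\tilde{G}_{k,l},\tilde{O}_{k,l}$ carry fibrations whose discriminant is governed by the amoeba of the deforming polynomial. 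The first bookkeeping step is to record, for each space, the base $B$, its integral affine structure, and its discriminant locus $\Delta$ --- in particular the \emph{negative} trivalent vertices produced by resolution and the \emph{positive} vertices and segments produced by smoothing --- and to observe that the base of $G_{k,l}$ and that of $O_{k,l}$ are identified after transposing the affine charts.

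Next I would T-dualize the smooth part. Over $B_0=B\setminus\Delta$ the fibration is a Lagrangian torus bundle, and its dual is a complex manifold glued from copies of $(\C^*)^3$ by transition maps given by the transpose of the affine monodromy of $B_0$; reading these off around each stratum of $\Delta$ gives an uncorrected mirror. The complex structure is then corrected by wall-crossing: each wall emanating from $\Delta$ contributes a gluing factor $1+(\text{monomial})$, where the monomial is a generating function of Maslov-index-two discs through a generic fibre. For the toric-boundary walls these are the basic disc classes and yield the $(1+z)$- and $(1+w)$-type factors, while for the smoothed models one must sum the disc counts across the amoeba wall.

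The core of the argument, and the step I expect to be the main obstacle, is to show that this a priori infinite sum of open Gromov--Witten invariants across the amoeba resums to precisely the finite polynomial occurring in the defining equation --- $(1+z)^k(1+w)^l$ in the generalized case, and $(1+z)^k$, $(1+z)^l$ in the orbifolded case. I would attack this by degenerating the amoeba into a chain of elementary pieces, each modeled on the self-mirror $k=l=1$ conifold whose disc count is the single factor $1+z$, and then invoking the consistency and multiplicativity of the associated tropical vertex / scattering calculus to conclude that the total wall-crossing is the ordered product of these elementary factors. Together with the uncorrected gluing, this identifies the corrected mirror of $\tilde{G}_{k,l}$ with the crepant resolution of $\{u_1v_1=(1+z)^k,\ u_2v_2=(1+z)^l\}$, i.e.\ with $\hat{O}_{k,l}$, and symmetrically $\hat{G}_{k,l}$ with $\tilde{O}_{k,l}$; pushing the smoothing and resolution parameters to the large-complex-structure/large-volume limit degenerates the statement to $G_{k,l}\leftrightarrow O_{k,l}$.

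Finally I would verify that the three vertical SYZ equivalences are compatible with the horizontal smoothing and contraction arrows, so that the diagram commutes. This should be essentially formal once Step 1 is in place: smoothing, resolving, and taking the singular limit are all realized by sliding the same parameters in the common affine base, so any mirror correspondence that matches the fibrations automatically intertwines these operations. (All of the above is carried out in detail in \cite{KL}; the argument sketched here is a reconstruction of that proof.)
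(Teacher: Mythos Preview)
The paper does not give a self-contained proof of this theorem; it is quoted from \cite{KL}, and the surrounding text only records the ingredients --- the explicit Lagrangian fibrations on $G_{k,l}$, $O_{k,l}$ and their resolutions and smoothings, the discriminant loci (Lemma~\ref{lem:disc-locus}), the walls, and the slab functions --- before referring to \cite{CLL,KL} for the gluing.  Your sketch follows the same SYZ framework and reconstructs essentially the same argument.

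The one point where you diverge from the paper's summary is your ``core step''.  You propose to obtain the wall-crossing factors by degenerating the amoeba into elementary $k=l=1$ pieces and multiplying via the tropical-vertex/scattering calculus, with the goal of recovering the bare polynomials $(1+z)^k(1+w)^l$ and $(1+z)^k,(1+z)^l$.  The paper instead records the slab functions directly: on $\hat{G}_{k,l}$ they are the toric products $(1+Z)(1+q_1Z)\cdots(1+q_1\cdots q_{k-1}Z)$ involving the K\"ahler parameters $q_i$ (so the mirror is a \emph{generic} smoothing $\tilde{O}_{k,l}$, not the degenerate $(1+z)^k$), and on $\hat{O}_{k,l}$ the slab function is $\sum_{i,j}(1+\delta_{ij}(q))q^{C_{ij}}Z^iW^j$ with the $\delta_{ij}$ computed from the mirror map via \cite{CCLT13}, not by a resummation argument.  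This is a difference of route rather than substance --- your scattering approach should reproduce the same factors --- but be aware that the actual slab functions carry K\"ahler parameters and are not the special polynomials defining the singular $G_{k,l}$ and $O_{k,l}$ themselves.
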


The SYZ program realizes a mirror pair as dual Lagrangian torus fibrations.  On $O_{k,l}$, we consider the Hamiltonian $T^2$-action given by $u_i \mapsto \lambda_i u_i$, $v_i \mapsto \lambda_i^{-1} v_i$ for $i=1,2$, leaving $z$ unchanged.  We also have the corresponding action on its resolution and smoothing, and let's denote the moment map to $\R^2$ by $\nu_{\mathbb{T}^2}$ (which is simply given by $(|u_1|^2 - |v_1|^2,|u_2|^2 - |v_2|^2)$ on $O_{k,l}$).  Then one can verify that
\begin{equation} \label{eq:fib_O}
(\log|z|,\nu_{\mathbb{T}^2})
\end{equation}
gives a Lagrangian fibration on $O_{k,l}$,$\hat{O}_{k,l}$ and $\tilde{O}_{k,l}$.

On $G_{k,l}$, we have the Hamiltonian $\bS^1$-action given by $x \mapsto \lambda x$, $y \mapsto \lambda^{-1} y$, leaving $z,w$ unchanged.  We also have the corresponding action on its resolution and smoothing, and let's denote the moment map to $\R$ by $\mu_{\mathbb{S}^1}$.  ($\mu_{\mathbb{S}^1}$ is simply given by $|x|^2 - |y|^2$ on $G_{k,l}$.)  Then we have the torus fibration
$$ (\mu_{\mathbb{S}^1},\log|z|,\log|w|) $$
on $G_{k,l}$,$\hat{G}_{k,l}$ and $\tilde{G}_{k,l}$.
The torus fibration is Lagrangian for $\hat{G}_{k,l}$, but not for $\tilde{G}_{k,l}$.
By the result of \cite{AAK} using the Moser argument, the torus fibration can be modified by an isotopy to a Lagrangian fibration (where the fibration map is piecewise smooth near the discriminant locus) with the first coordinate $\mu_{\bS^1}$ remains unchanged.  The issue is that the reduced symplectic space of $\tilde{G}_{k,l}$ by $\bS^1$ is just isotopic but not exactly equal to $(\C^2,\omega_{\mathrm{std}})$, and so one needs the Moser argument to connect these two by symplectomorphisms.  

From the Lagrangian fibrations on $\hat{G}_{k,l}$ and $\tilde{G}_{k,l}$, we constructed $\tilde{O}_{k,l}$ and $\hat{O}_{k,l}$ as their SYZ mirrors respectively; the reverse diection is also true.  The key ingredient in the construction is the holomorphic discs emanated from singular Lagrangian fibers (which are of Maslov index zero).  It is summarized as follows.

\begin{lemma}[Discriminant locus] \label{lem:disc-locus}
For the Lagrangian fibration on $G_{k,l}$ or the Lagrangian fibration on $O_{k,l}$, the discriminant locus is given by $\left(\{0\} \times \R \times \{0\}\right) \cup \left(\{0\}\times \{0\} \times \R\right)$.  On $\hat{G}_{k,l}$ or $\tilde{O}_{k,l}$, it becomes 
$$\left(\bigcup_{i=1}^k \left(\{s_i\} \times \R \times \{0\}\right)\right) \cup \left(\bigcup_{i=1}^l \left(\{t_i\} \times \{0\} \times \R \right)\right)$$
where $s_i$ and $t_i$ are certain constants related to the symplectic sizes of spheres in the exceptional curve of $\hat{G}_{k,l}$.  

For $\tilde{G}_{k,l}$ or $\hat{O}_{k,l}$, the discriminant locus is contained in the plane $\{0\} \times \R \times \R$ and is homotopic to the dual graph of the triangulation of the rectangle with vertices $(0,0),(k,0),(0,l),(k,l)$ in the definition of the resolution $\hat{O}_{k,l}$.
\end{lemma}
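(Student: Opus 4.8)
The plan is to treat each fibration through its defining symplectic reduction, thereby reducing the computation of the discriminant to two pieces of data: the critical values of the reduction moment map, and the singular fibres of the torus fibration induced on each reduced symplectic surface. For $G_{k,l},\hat G_{k,l},\tilde G_{k,l}$ this reduction is generated by $x\mapsto\lambda x,\ y\mapsto\lambda^{-1}y$ at level $\mu_{\bS^1}=|x|^2-|y|^2=c$; for $O_{k,l},\hat O_{k,l},\tilde O_{k,l}$ it is generated by $(u_i,v_i)\mapsto(\lambda_i u_i,\lambda_i^{-1}v_i)$ at level $\nu_{\bT^2}=(c_1,c_2)$. The three $O$-cases will run in parallel to the three $G$-cases, and by Theorem \ref{localSYZ} each SYZ mirror pair shares the same base, which I will use as a consistency check.

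First I would treat $G_{k,l}=\{xy=(1+z)^k(1+w)^l\}\setminus\{zw=0\}$. The $\bS^1$-action is free off $\{x=y=0\}\subset\mu_{\bS^1}^{-1}(0)$, so for $c\neq0$ the reduced space is smooth and, through $(z,w)$, is identified topologically with $(\C^*)^2_{z,w}$ (over each $(z,w)$ the set $\{xy=(1+z)^k(1+w)^l,\ |x|^2-|y|^2=c\}/\bS^1$ is a single point); hence $(\log|z|,\log|w|)$ is a smooth torus bundle there and contributes nothing to the discriminant. For $c=0$ the reduced space is again $(\C^*)^2_{z,w}$ (carrying the function $s=xy=(1+z)^k(1+w)^l$), but its reduced symplectic form degenerates exactly along $C:=\{(1+z)^k(1+w)^l=0\}=\{z=-1\}\cup\{w=-1\}$, and the fibre of $(\log|z|,\log|w|)$ is singular precisely over the amoeba $\mathcal A(C)=(\{0\}\times\R)\cup(\R\times\{0\})$. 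Thus the discriminant on $G_{k,l}$ is $\{0\}\times\mathcal A(C)=(\{0\}\times\R\times\{0\})\cup(\{0\}\times\{0\}\times\R)$. The same bookkeeping, now with $\{u_1=v_1=0\}$ and $\{u_2=v_2=0\}$ lying over $z=-1$ and forcing $c_1=0$ resp.\ $c_2=0$, gives the identical set for $O_{k,l}$.

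Next I would treat the resolution $\hat G_{k,l}$ together with the smoothing $\tilde O_{k,l}$, where $(1+z)^k,(1+z)^l$ are replaced by polynomials of degrees $k,l$ with distinct roots $a_1,\dots,a_k$ and $b_1,\dots,b_l$. For $\tilde O_{k,l}$ this is immediate: the first $\bS^1$ fixes $\{u_1=v_1=0\}$, lying over $z\in\{a_i\}$ and over $\nu_1=0$, and likewise for the second, so the discriminant is $\bigcup_{i=1}^k(\{\log|a_i|\}\times\{0\}\times\R)\cup\bigcup_{j=1}^l(\{\log|b_j|\}\times\R\times\{0\})$, i.e.\ $k+l$ parallel lines, which is the asserted set after matching coordinates. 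For $\hat G_{k,l}$ one uses that a toric crepant resolution triangulates the trapezoid into unimodular triangles, that the exceptional locus is a chain of $\bP^1$'s, and that the $\bS^1$ lifts with isolated fixed components; as $c$ crosses a critical value the reduced surface acquires a single node over $z=-1$ or over $w=-1$. Grouping the critical values accordingly and reading them off the Kähler class — the symplectic areas of the exceptional spheres — yields $\{t_i\}$ and $\{s_i\}$; the count $k+l$ and the identification also follow from $\tilde O_{k,l}$ via Theorem \ref{localSYZ}.

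Finally I would treat the smoothing $\tilde G_{k,l}=\{xy=h(z,w)\}\setminus\{zw=0\}$, with $h$ a Morsification of $(1+z)^k(1+w)^l$ having smooth zero curve $C\subset(\C^*)^2$ of Newton polygon $[0,k]\times[0,l]$, together with the resolved orbifolded conifold $\hat O_{k,l}$. Exactly as for $G_{k,l}$, the $\bS^1$-reduced surface has no discriminant for $c\neq0$ and, for $c=0$, is $(\C^*)^2_{z,w}$ with symplectic form degenerate along $C$, so the torus fibration $(\mu_{\bS^1},\log|z|,\log|w|)$ has discriminant $\{0\}\times\mathcal A(C)\subset\{0\}\times\R\times\R$. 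By the amoeba–tropical correspondence for plane curves, $\mathcal A(C)$ deformation retracts onto the tropical curve dual to a unimodular triangulation of $[0,k]\times[0,l]$, which is homotopy equivalent to the dual graph of a triangulation of the rectangle; by Viro patchworking one may choose $h$ so that this is the triangulation defining $\hat O_{k,l}$, finishing the identification of the homotopy type. One then passes to the Lagrangian fibration produced by the Moser argument of \cite{AAK}: since that isotopy fixes $\mu_{\bS^1}$ as first coordinate and is supported near $\mu_{\bS^1}=0$, the discriminant stays inside $\{0\}\times\R\times\R$ and only deforms within its ambient isotopy class, so its homotopy type is unchanged. For $\hat O_{k,l}$ the same argument is run on the $T^2$-reduced surfaces over a neighbourhood of $z=-1$, where the resolved space models the crepant resolution of the toric conifold over $[0,k]\times[0,l]$ and the induced Lagrangian fibration of Gross type has discriminant a thickening of the dual graph; alternatively this case follows from $\tilde G_{k,l}$ by Theorem \ref{localSYZ}. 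I expect this last case to be the main obstacle: it requires a careful choice of $h$ (equivalently of the triangulation) together with amoeba/tropical-degeneration input to identify $\mathcal A(C)$ up to homotopy with a thickened dual graph, and a verification that the Moser isotopy — respectively the explicit Lagrangian fibration on the crepant resolution over the rectangle — preserves this homotopy type; the remaining cases amount to bookkeeping of $\bS^1$- and $T^2$-fixed loci against the combinatorics of the polygons.
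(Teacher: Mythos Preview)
The paper does not give a proof of this lemma; it appears in the review Section~\ref{sec:SYZ} as a statement imported from \cite{KL}, with no argument supplied beyond the phrase ``one can verify.'' Your approach via symplectic reduction is the standard and correct route: locate the singular fibers as the images of the fixed loci of the $\bS^1$- or $T^2$-action, read off their projections under $(\log|z|,\log|w|)$ or $\nu_{\bT^2}$, and for $\tilde G_{k,l}$ invoke the amoeba--tropical retraction (with a patchworked choice of $h$) together with the observation that the Moser isotopy of \cite{AAK} fixes $\mu_{\bS^1}$ and hence keeps the discriminant inside $\{0\}\times\R^2$ up to ambient isotopy. This is essentially how one would flesh out the verification in \cite{KL}, and the self-consistency checks you run via Theorem~\ref{localSYZ} are exactly the right sanity tests.

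Two small remarks. First, in your $\tilde O_{k,l}$ computation the labeling of the two families of lines is swapped relative to the lemma's convention (your $a_i$'s land in $\{0\}\times\R$ while the lemma's $s_i$'s parametrize lines along $\R\times\{0\}$); you flag this as a coordinate match, which is fine. Second, your treatment of $\hat G_{k,l}$ is slightly compressed: rather than phrasing it as ``the reduced surface acquires a node as $c$ crosses a critical value,'' it is cleaner to note directly that $\hat G_{k,l}$ is toric, so the singular fibers of the full $T^3$-fibration sit over the images of the interior toric strata (the exceptional $\bP^1$'s and their intersection points), which after removing $\{zw=0\}$ project to the $k+l$ parallel lines with first coordinates given by the moment-map levels of those strata. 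This is what your sentence ``reading them off the K\"ahler class'' is pointing at, but making the toric picture explicit avoids any ambiguity.
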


Denote by $B_0$ the complement of the discriminant locus in the base $B$ of a Lagrangian fibration with a Lagrangian section.  $B_0$ has an induced tropical affine structure (namely it has an atlas with transition maps being elements of $\mathrm{GL}(n,\Z) \ltimes \R^n$).  Denote by $\Lambda \subset TB_0$ the corresponding local system, and $\Lambda^* \subset T^*B_0$ the dual.  Then the inverse image of $B_0$ is symplectomorphic to $T^*B_0 / \Lambda^*$.  The complex manifold $TB_0 / \Lambda$ is called the semi-flat mirror which serves as the first-order approximation.  Then the generating functions of holomorphic discs of Maslov index zero give corrections to the complex structure of the semi-flat mirror.

\begin{prop}[Wall]
Given a Lagrangian fibration, Let $H \subset B$ (called the wall) be the set of regular Lagrangian torus fibers which bound non-constant holomorphic discs of Maslov index zero.  For the Lagrangian fibration on $G_{k,l}$ or $\hat{G}_{k,l}$, $H$ is given by $\left(\R \times \R \times \{0\}\right) \cup \left(\R \times \{0\} \times \R\right)$.  On $\tilde{G}_{k,l}$, $H$ is given by $\R \times \Delta$ where $\Delta$ denotes the discriminant locus contained in the horizontal plane.

For the Lagrangian fibration on $O_{k,l}$ or $\hat{O}_{k,l}$, $H$ is given by $\{0\} \times \R^2$.  On $\tilde{O}_{k,l}$, $H$ becomes $\{s_1,\ldots,s_k,t_1,\ldots,t_l\} \times \R^2$ where $s_i$ and $t_i$ are the constants appearing in Lemma \ref{lem:disc-locus}.
\end{prop}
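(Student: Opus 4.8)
The plan is to reduce, for each of these spaces, the determination of Maslov-index-zero holomorphic discs bounded by a regular fibre to a disc count inside a single fibre of an auxiliary conic bundle, via the projection that forgets the conic directions. For $G_{k,l}$ and its resolution $\hat G_{k,l}$ I would use $\pi\colon(x,y,z,w)\mapsto(z,w)$, which exhibits the space as a (partially resolved) conic bundle over $\C^*_z\times\C^*_w$ with fibre $\{xy=(1+z)^k(1+w)^l\}$; for $O_{k,l}$ and $\hat O_{k,l}$ I would use the projection to $z$, realizing the space as the fibre product over $\C^*_z$ of the two (partially resolved) conic bundles $\{u_1v_1=(1+z)^k\}$ and $\{u_2v_2=(1+z)^l\}$; the smoothings are treated identically after replacing the defining polynomials by their deformations. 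The one elementary input is that a holomorphic map $u\colon(D,\partial D)\to(\C^*,S^1_r)$ is constant: $\log|u|$ is harmonic on $D$ and constant on $\partial D$, hence constant, so $u$ has image in the circle $S^1_r$ and is therefore constant. Applying this to the composition of a holomorphic disc with $\pi$ (whose image avoids $\{z=0\}$ and $\{w=0\}$ since those divisors have been removed) shows that every non-constant holomorphic disc with boundary on a regular fibre $L_b$ lies in a single conic fibre, over a point $(z_0,w_0)$ lying on the coordinate torus over which $L_b$ sits.

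The next step is to analyze discs inside one conic fibre. When the fibre is a smooth conic $\{xy=c\}$ with $c\neq 0$ it is a copy of $\C^*$, and the fibre circle of $L_b$ bounds no non-constant holomorphic disc; the same holds for the resolved fibre when $c\neq 0$. When $c=0$ the conic degenerates to $\{x=0\}\cup\{y=0\}$ (a chain of rational curves together with two affine components after resolution) and the fibre circle bounds a small disc in the component singled out by the sign of the moment-map coordinate, of the same nature as a Lefschetz thimble. This disc remains in a compact part of one fibre, hence is disjoint from the removed anticanonical divisor and from infinity, so by Auroux's index formula \cite{auroux07} its Maslov index is twice its intersection number with the boundary divisor, namely $0$. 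One checks along the same lines that no other relative homotopy class --- multiple covers, discs passing through a node, or discs running into an exceptional $\bP^1$ of a resolution --- produces a \emph{new} Maslov-zero class; in particular the resolutions $\hat G_{k,l},\hat O_{k,l}$ contribute no Maslov-zero discs over points outside $\{z=-1\}$ and $\{w=-1\}$. Putting this together: a regular fibre $L_b$ bounds a non-constant Maslov-zero disc if and only if the coordinate torus carrying $L_b$ meets the zero locus of the defining polynomial.

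It remains to read off the formulas. For $G_{k,l}$ and $\hat G_{k,l}$ the zero locus of $(1+z)^k(1+w)^l$ is $\{z=-1\}\cup\{w=-1\}$, and the torus $\{|z|=e^{b_2},\,|w|=e^{b_3}\}$ meets it exactly when $b_2=0$ or $b_3=0$, so $H=(\R\times\R\times\{0\})\cup(\R\times\{0\}\times\R)$. For $O_{k,l}$ and $\hat O_{k,l}$ the zero locus of both $(1+z)^k$ and $(1+z)^l$ is $\{z=-1\}$, the condition is $b_1=0$, and $H=\{0\}\times\R^2$. For the smoothing $\tilde O_{k,l}$ the two polynomials acquire simple roots $a^{(1)}_1,\dots,a^{(1)}_k$ and $a^{(2)}_1,\dots,a^{(2)}_l$ near $-1$, so the condition becomes $\log|z|\in\{\log|a^{(1)}_1|,\dots,\log|a^{(1)}_k|\}\cup\{\log|a^{(2)}_1|,\dots,\log|a^{(2)}_l|\}=\{s_1,\dots,s_k,t_1,\dots,t_l\}$, giving $H=\{s_1,\dots,s_k,t_1,\dots,t_l\}\times\R^2$.

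The remaining case $\tilde G_{k,l}$ does not follow directly, because there the torus fibration $(\mu_{\bS^1},\log|z|,\log|w|)$ is not Lagrangian and $L_b$ no longer sits literally over a coordinate torus; this is where I expect the main difficulty. Here I would pass to the Lagrangian fibration furnished by the Moser-type isotopy of \cite{AAK}, which keeps $\mu_{\bS^1}$ fixed, and work with the induced Lagrangian fibration on the reduced four-manifold $\tilde G_{k,l}/\!\!/\bS^1\cong(\C^2\setminus\{zw=0\},\tilde\omega)$. By the analysis of \cite{AAK,CLL} the wall of that fibration is the piecewise-linear spine of the amoeba of $\{f=0\}$, i.e. the tropical graph $\Delta$ dual to the triangulation of the rectangle appearing in Lemma \ref{lem:disc-locus}, and pulling it back along the $\bS^1$-quotient yields $H=\R\times\Delta$. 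The hard part is controlling this straightening: one must show that the Moser isotopy turning the non-Lagrangian model into a Lagrangian fibration carries the ``curvy'' amoeba wall onto the piecewise-linear graph $\Delta$ and neither creates nor destroys boundaries of Maslov-zero discs --- this is the content that rests on \cite{AAK,CLL,KL}. A secondary, but uniform, technical point is the verification (flagged above) that the small thimble-like discs, including those produced by the crepant resolutions, exhaust all Maslov-zero classes and all have index $0$; this is handled with the explicit local models together with the index formula of \cite{auroux07}.
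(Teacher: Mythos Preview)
The paper does not actually prove this proposition: it appears in Section~\ref{sec:SYZ}, which is explicitly a review of \cite{KL}, and the statement is simply asserted (with the surrounding text pointing to \cite{CLL,KL} for details). So there is no in-paper argument to compare against.

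That said, your proposed argument is correct and is precisely the standard one that underlies the cited references. The reduction step---composing a disc with the projection to the $(z,w)$-torus (resp.\ to $z$) and using that a holomorphic map $(D,\partial D)\to(\C^\times,S^1_r)$ is constant---is exactly how one localizes discs to a single conic fibre in \cite{auroux07,CLL,AAK,KL}. Your fibrewise analysis (smooth conic $\cong\C^\times$ bounds nothing; degenerate conic gives a thimble disc of Maslov index $0$ via Auroux's formula) is the right mechanism, and the bookkeeping that translates ``the base torus meets $\{f=0\}$'' into the stated loci for $G_{k,l},\hat G_{k,l},O_{k,l},\hat O_{k,l},\tilde O_{k,l}$ is accurate.

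You are also right to single out $\tilde G_{k,l}$ as the case that is genuinely different. The paper itself flags this (``the torus fibration is Lagrangian for $\hat G_{k,l}$ but not for $\tilde G_{k,l}$\ldots by \cite{AAK} the torus fibration can be modified\ldots''), and the content you defer to \cite{AAK,CLL}---that after the Moser isotopy the wall becomes the piecewise-linear spine $\Delta$ rather than the curved amoeba---is exactly what those references supply. One small sharpening: strictly speaking, the AAK construction does not ``carry the curvy amoeba wall onto $\Delta$''; rather, it builds a \emph{different} Lagrangian fibration whose reduced base is by design the tropical/PL picture, so the wall is $\R\times\Delta$ from the outset in those coordinates. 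But this is a matter of phrasing, not of substance.
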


The discriminant loci and walls are shown in Figure \ref{fig:local-walls-gencfd}.  

\begin{theorem}[Slab function]
Each component of $H$ is attached with a function defined by wall-crossing of the open Gromov-Witten potential.  For $\hat{G}_{k,l}$, the function attached to $\R \times \R \times \{0\}$ is given by $(1+Z)(1+q_1Z)\ldots(1+q_1\ldots q_{k-1}Z)$, and that attached to $\R \times \{0\} \times \R$ is given by $(1+cZ)(1+q_1'cZ)\ldots(1+q_1'\ldots q_{l-1}'cZ)$, where $q_1,\ldots,q_{k-1},q_1',\ldots,q_{l-1}',c$ are given in the form $e^{- \int_C \omega}$ for certain rational curves $C$, and $Z$ is the semi-flat complex coordinate corresponding to the third coordinate $\mu_{\bS^1}$ of the base of the Lagrangian fibration.  For $\tilde{G}_{k,l}$ the function is $1+Z$.

For $\tilde{O}_{k,l}$, the slab function attached to each $\{s_i\} \times \R^2$ is $(1+Z)$, and that attached to each $\{t_i\} \times \R^2$ is $(1+W)$, where $Z,W$ are the semi-flat complex coordinates corresponding to the base direction $\nu_{T^2}$ of the Lagrangian fibration.  For $\hat{O}_{k,l}$, the slab function is of the form 
$$\sum_{i=0}^k \sum_{j=0}^l (1+\delta_{ij}(q)) q^{C_{ij}}  Z^i W^j$$
where for each $i,j$, $\delta_{ij}(q)$ is a certain generating function of open Gromov-Witten invariants, and $C_{ij}$ is a certain rational curve in $\hat{O}_{k,l}$.
\end{theorem}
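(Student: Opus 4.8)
The plan is to extract each slab function from the open Gromov--Witten potential constructed in \cite{KL}, which is assembled from the disc-counting machinery of \cite{auroux07,CLL,AAK}. The organizing principle, uniform across all six spaces, is that the function decorating a component of the wall $H$ equals the generating series
\[ \sum_{\beta} n_{\beta}\, q^{\beta}\, \mathbf{z}^{\partial\beta}, \]
where $\beta$ ranges over relative classes of Maslov-index-zero holomorphic discs bounded by a regular Lagrangian fibre lying on that component, $n_\beta$ is the one-pointed open GW invariant, $q^{\beta}=e^{-\int_\beta\omega}$ records the disc area, and $\mathbf{z}^{\partial\beta}$ is the monomial in the semi-flat coordinates transverse to the wall determined by $\partial\beta$. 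Thus the proof reduces to (i) enumerating the classes $\beta$ that contribute and reading off $\partial\beta$, and (ii) evaluating $n_\beta$; both are carried out by localizing near the singular fibres over the discriminant of Lemma \ref{lem:disc-locus} — i.e.\ near the exceptional curves of the resolutions and the vanishing cycles of the smoothings — and reducing to known toric computations.

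On the $G$-side I would reduce by the Hamiltonian $\bS^1$-action $x\mapsto\lambda x$, $y\mapsto\lambda^{-1}y$; as in \cite{AAK,CLL}, a Maslov-zero disc with boundary on $H$ corresponds to a disc in the symplectic reduction bounded by the reduced Lagrangian, with matching boundary class and with the area shift along the $\mu_{\bS^1}$-direction recorded precisely by the semi-flat coordinate $Z$. For $\hat G_{k,l}$ the crepant resolution subdivides the trapezoid $(0,0),(k,0),(0,1),(l,1)$ into unit triangles, so the exceptional locus is a chain of $k+l-1$ rational curves; along the wall component $\R\times\R\times\{0\}$ one finds, besides the constant disc, exactly one Maslov-zero disc through the $i$-th node of the length-$k$ sub-chain, of boundary class $Z$ and of area the accumulated size $q_1\cdots q_{i-1}$ of the spheres up to that node. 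Summing yields $(1+Z)(1+q_1Z)\cdots(1+q_1\cdots q_{k-1}Z)$, and symmetrically for $\R\times\{0\}\times\R$, with $c$ the area separating the two sub-chains. For $\tilde G_{k,l}$ the reduction is isotopic but not equal to $(\C^2,\omega_{\mathrm{std}})$, so one first transports the count along the isotopy via the Moser argument of \cite{AAK}; the resulting count is the elementary one, a single basic disc of boundary class $Z$, giving the slab function $1+Z$.

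On the $O$-side I would instead reduce by the Hamiltonian $T^2$-action on $(u_1,v_1,u_2,v_2)$ fixing $z$, so the reduction is one complex-dimensional and the base directions $\nu_{T^2}$ become the semi-flat coordinates $Z,W$. For $\tilde O_{k,l}$ a simple root of the deformed $u_1v_1=g_1(z)$ produces a single node over each sheet $\{s_i\}\times\R^2$, carrying one disc of boundary class $Z$, so the attached function is $1+Z$; each simple root of $u_2v_2=g_2(z)$ gives $1+W$ over $\{t_j\}\times\R^2$. For $\hat O_{k,l}$ the crepant resolution triangulates the rectangle with vertices $(0,0),(k,0),(0,l),(k,l)$; over the single wall $\{0\}\times\R^2$ the fibre now bounds discs realizing every lattice point $(i,j)$ of the rectangle as a boundary class, the class $C_{ij}$ of the swept-out toric curves fixing the area, so the slab function takes the stated form $\sum_{i=0}^{k}\sum_{j=0}^{l}(1+\delta_{ij}(q))\,q^{C_{ij}}Z^iW^j$ with $\delta_{ij}(q)$ the open GW correction. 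Up to a monomial change of coordinates this is the defining polynomial of the mirror $xy=f(z,w)$ of $\tilde G_{k,l}$, which is exactly the compatibility predicted by Theorem \ref{localSYZ}.

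The main obstacle is pinning down the genuine corrections $\delta_{ij}(q)$ for $\hat O_{k,l}$, together with the companion fact that the linear factors for $\hat G_{k,l}$ are not further corrected: these record contributions of non-basic discs — multiple covers and discs sweeping several toric divisors of the resolved surface — which are invisible to dimension counting and positivity alone. Rather than computing them by hand I would invoke the toric disc-counting results of \cite{CLL} and the identification in \cite{AAK} of such a wall function with a hypersurface equation, and then match the combinatorics of the trapezoid and rectangle subdivisions defining $\hat G_{k,l}$ and $\hat O_{k,l}$ to those formulas. The remaining work — tracking which curve classes $C_{ij}$ and $C_j$ occur and with which symplectic areas — is routine bookkeeping.
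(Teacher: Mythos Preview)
The paper does not actually prove this theorem. It appears in Section~\ref{sec:SYZ}, which is explicitly a review section; the statement is lifted from the prior work \cite{KL} (together with \cite{CLL,AAK}), and the only justification offered in the paper is the sentence ``$\delta_{ij}(q)$ can be explicitly computed from the mirror map \cite{CCLT13}'' and the reference ``We refer to \cite{CLL,KL} for detail.'' So there is no in-paper proof to compare against.

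Your sketch is consistent with the approach of those references: symplectic reduction by the Hamiltonian torus action, identification of Maslov-zero discs with discs in the reduced space, and appeal to the toric open GW computations of \cite{CLL,CCLT13} (and the Moser/isotopy argument of \cite{AAK} for $\tilde G_{k,l}$). That is exactly the machinery the paper is citing. Two small points of language worth tightening: first, for $\hat G_{k,l}$ the $k$ linear factors on the wall $\R\times\R\times\{0\}$ correspond to the $k$ parallel components $\{s_i\}\times\R\times\{0\}$ of the discriminant locus in Lemma~\ref{lem:disc-locus}, i.e.\ to the $k$ singular fibres emanating basic discs, rather than to ``nodes of the sub-chain''; second, the product form arises because wall-crossing automorphisms compose, so ``summing yields the product'' is slightly misleading---the generating series you wrote down is the expansion of that product, and the factorisation is what reflects the independence of the $k$ sources. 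Neither point is a genuine gap; your outline is a fair reconstruction of the argument in \cite{KL}.
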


$\delta_{ij}(q)$ can be explicitly computed from the mirror map \cite{CCLT13}.  The discriminant loci, walls and slab functions are shown in Figure \ref{fig:local-walls-gencfd}.

\begin{figure}[h]
\begin{center}
\includegraphics[scale=0.5]{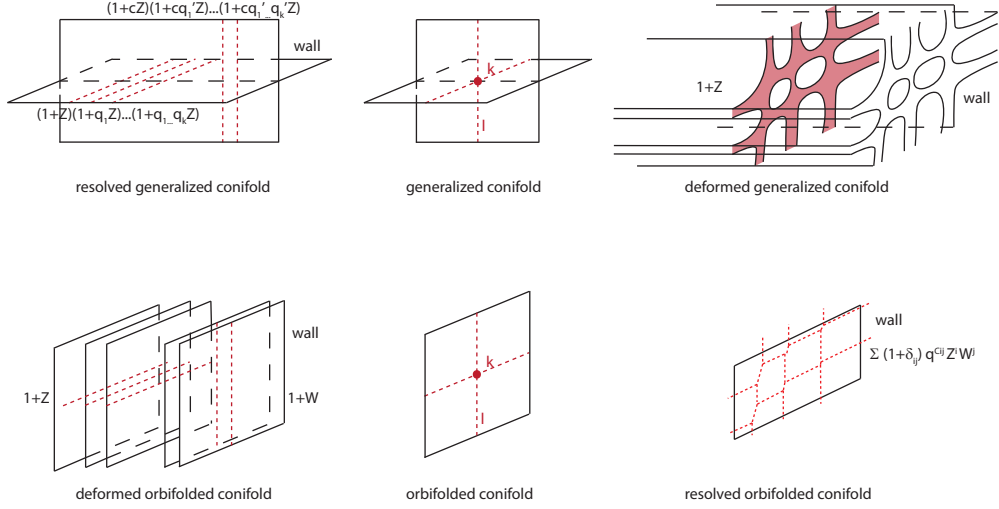}
\caption{The discriminant loci, walls and slab functions for the local generalized and orbifolded conifold transitions.}
\label{fig:local-walls-gencfd}
\end{center}
\end{figure}

By gluing local pieces of semi-flat mirrors using the slab functions, we obtain the SYZ mirrors given in Theorem \ref{localSYZ}.  We refer to \cite{CLL,KL} for detail.
In Section \ref{sec:loc-aff}, we shall follow \cite{GS07,CM1,CM2} and use tropical geometry to encode the data.  The Gross-Siebert program has the important advantage that it can handle global geometries by combinatorics.  The walls and generating functions of the local geometry given above serve as initial input data to the Gross-Siebert program.

\begin{remark}
In an ongoing work we shall construct the quiver mirror of $\tilde{O}_{k,l}$ by using the construction of \cite{CHL2}, see Figure \ref{fig:ncgencfd}, which is useful in studying stability conditions and flop along the line of \cite{FHLY}.  It is a noncommutative resolution of $G_{k,l}$ (which was extensively studied by \cite{Nagao,MN}, and can be derived from the construction of Bocklandt \cite{Bocklandt}). 
\end{remark}

\begin{figure}[h]
\begin{center}
\includegraphics[scale=0.5]{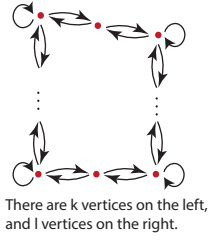}
\caption{A noncommutative resolution of the generalized conifold $G_{k,l}$, which also serves as a mirror of the orbifolded conifold.}
\label{fig:ncgencfd}
\end{center}
\end{figure}

In summary, we have SYZ mirror symmetry for the resolutions and smoothings of the local generalized and orbifolded conifold transitions near large volume limits.  There is also a noncommutative mirror construction near the conifold limit.  It is schematically depicted by Figure \ref{fig:gen-con-MS-mod}.

\begin{figure}[h]
\begin{center}
\includegraphics[scale=0.7]{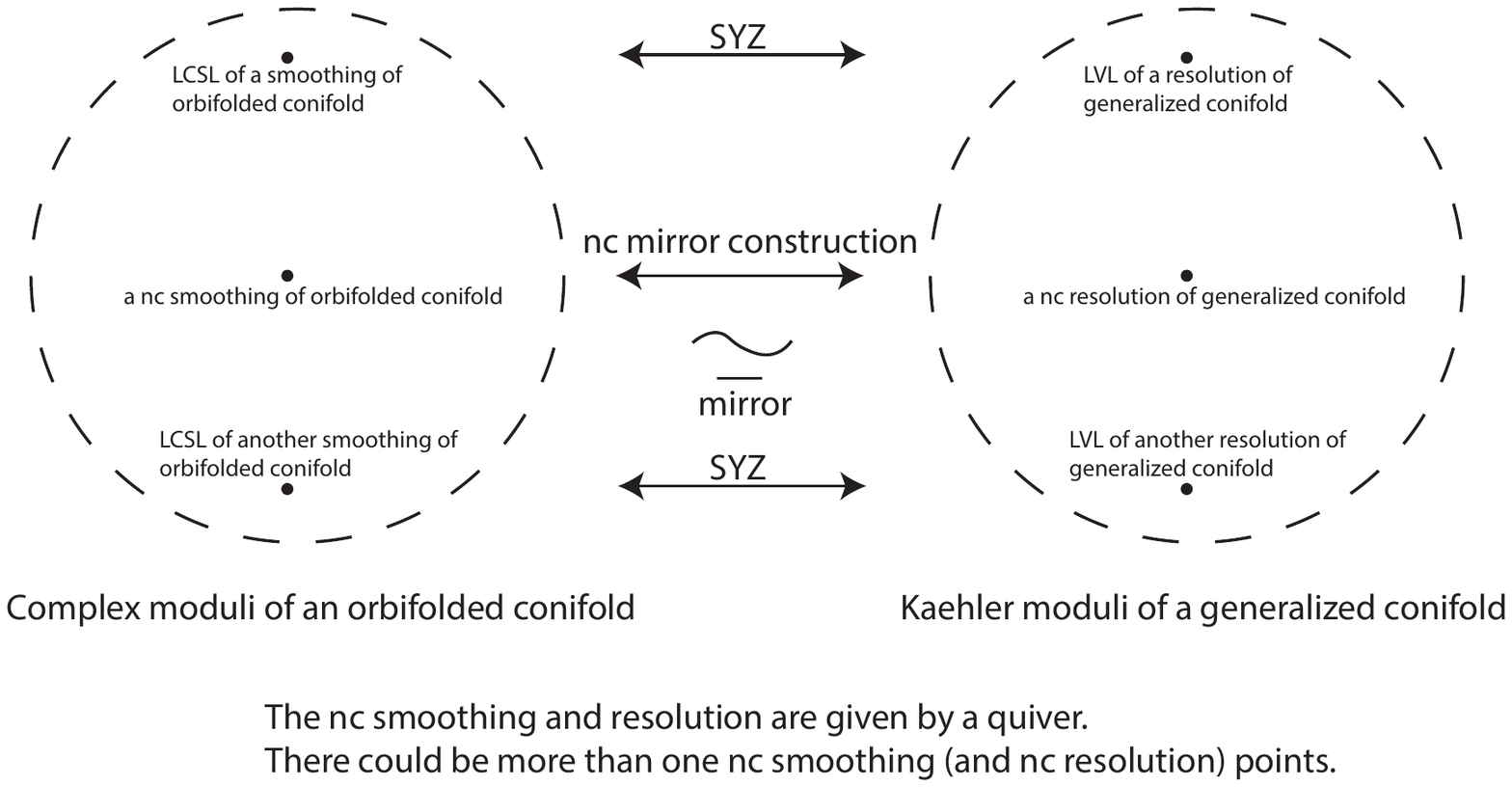}
\caption{The moduli spaces.  Around a large complex structure limit we have the SYZ construction using Lagrangian torus fibrations.  Around a conifold limit we have the noncommutative mirror construction in \cite{CHL2} using certain Lagrangian cycles.}
\label{fig:gen-con-MS-mod}
\end{center}
\end{figure}

\subsection{Monodromy computation for local generalized and orbifolded conifolds}
We have reviewed Lagrangian fibrations and SYZ for the local generalized conifold $G_{k,l}$ and orbifolded conifold $O_{k,l}$.  Now we compute the monodromies of the fibrations for later purpose.  

Recall that $B_0$ denotes the complement of discriminant locus in the base $B$ of the Lagrangian fibration.  For $G_{k,l}=\{xy=(1+z)^k(1+w)^l\}$, fix two contractible open sets $U_+$ and $U_-$ covering $B_0$, where $U_+ = B_0 - \R_{\leq 0} \times \Delta$ and $U_- = B_0 - \R_{\geq 0} \times \Delta$.  The torus bundles over $B_0$ trivialize over $U_+$ and $U_-$.  Then fix a basis of the fundamental group of each fiber at $(a,b,c) \in U_+$ as follows.  
\begin{align*}
\gamma_1(t) &\textrm{ defined by } z=e^b,w=e^c,x= r e^{2\pi\consti t} \textrm{ for } r \in \R_+;\\
\gamma_2(t) &\textrm{ defined by } z=e^b e^{2\pi\consti t},w=e^c,x \in \R_+;\\ 
\gamma_3(t) &\textrm{ defined by } z=e^b,w=e^c e^{2\pi\consti t},x \in \R_+
\end{align*}
under the constraints $|x|^2-|y|^2=e^a$ and $xy=(1+z)^k(1+w)^l$).  (Note that $x\not=0$ over $U_+$.)
$[\gamma_i]$ for $i=1,2,3$ defines a basis of the fundamental group.  Similarly we fix a basis over $U_-$ (where $y\not=0$) by taking 
\begin{align*}
\tilde{\gamma}_1(t) &\textrm{ defined by } z=e^b,w=e^c,y= r e^{-2\pi\consti t} \textrm{ for } r \in \R_+;\\
\tilde{\gamma}_2(t) &\textrm{ defined by } z=e^b e^{2\pi\consti t},w=e^c,y \in \R_+;\\ 
\tilde{\gamma}_3(t) &\textrm{ defined by } z=e^b,w=e^c e^{2\pi\consti t},y \in \R_+.
\end{align*}
In the pre-image of $U_-$, $y \not=0$ and so the above is well-defined.

\begin{prop}[Monodromy for $G_{k,l}$] \label{prop:mon-gencfd}
For the Lagrangian fibration on $G_{k,l}$, the monodromy around $\{0\} \times \R \times \{0\}$ is $[\gamma_1] \mapsto [\gamma_1]$, $[\gamma_2] \mapsto [\gamma_2]$ and $[\gamma_3] \mapsto [\gamma_3] + l [\gamma_1]$; the monodromy around $\{0\} \times \{0\} \times \R$ is $[\gamma_1] \mapsto [\gamma_1]$, $[\gamma_2] \mapsto [\gamma_2]- k [\gamma_1]$ and $[\gamma_3] \mapsto [\gamma_3]$.
\end{prop}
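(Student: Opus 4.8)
The plan is to compute the monodromy by tracking how the three loops $\gamma_1,\gamma_2,\gamma_3$ (defined over $U_+$) are continued around each component of the discriminant locus and re-expressed in terms of the basis $\{\gamma_i\}$ once one returns to the starting fiber. Since the fibration map has first coordinate $\mu_{\bS^1}=|x|^2-|y|^2$ and the remaining coordinates are $\log|z|,\log|w|$, a small loop around $\{0\}\times\R\times\{0\}$ is a loop in the $\mu_{\bS^1}$-direction crossed with a loop around the locus where the $w$-coordinate vanishes — concretely a loop where $1+w$ winds once around $0$ while $1+z$ and $\mu_{\bS^1}$ stay fixed at generic values. Similarly a small loop around $\{0\}\times\{0\}\times\R$ is one where $1+z$ winds once around $0$. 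So first I would set up these two explicit loops in the base and note that along each of them $x$ never vanishes for an appropriate choice of branch (this is where crossing from $U_+$ toward $U_-$ is or is not needed), so the $\{\gamma_i\}$ trivialization can be used throughout.

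Next, the key computation: over a fiber, $\gamma_1$ is the orbit of the $\bS^1$-action $x\mapsto\lambda x$, so it is intrinsic and monodromy-invariant — that gives $[\gamma_1]\mapsto[\gamma_1]$ in both cases immediately. For $\gamma_2$ (the loop winding $z$ once) and $\gamma_3$ (the loop winding $w$ once), the subtlety is the constraint $xy=(1+z)^k(1+w)^l$: as we drag the fiber around the discriminant, the constraint surface twists. Concretely, consider the monodromy around $\{0\}\times\{0\}\times\R$, i.e. $1+z\mapsto e^{2\pi\consti s}(1+z)$ as $s$ goes $0\to1$. Along $\gamma_2$, which itself winds $z$ once, the identification of the endpoint fiber with the starting fiber must absorb the fact that $xy$ has picked up $e^{2\pi\consti k s}$; to keep $x$ single-valued and in $\R_+$ one is forced to let $y$ wind, i.e. to add a multiple of $[\gamma_1]$. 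Carefully bookkeeping the winding numbers shows $[\gamma_2]\mapsto[\gamma_2]-k[\gamma_1]$ and $[\gamma_3]\mapsto[\gamma_3]$, because $w$ is untouched by this loop so $\gamma_3$ is carried to itself. The symmetric argument, swapping the roles of $z$ and $w$ (hence $k$ and $l$), gives the monodromy around $\{0\}\times\R\times\{0\}$: $[\gamma_2]\mapsto[\gamma_2]$, $[\gamma_3]\mapsto[\gamma_3]+l[\gamma_1]$. (The sign discrepancy — $-k$ versus $+l$ — is an artifact of the orientation conventions for $\gamma_2$ versus $\gamma_3$ and the choice of which half-plane $U_\pm$ one passes through; I would fix conventions once and check the sign is forced.)

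An alternative, and probably cleaner, route is to use the toric description: $G_{k,l}$ minus the anti-canonical divisor is an affine toric-type variety and the Lagrangian fibration is (after the AAK/Auroux modification) the one coming from a toric moment map composed with the $\log|\cdot|$ map on the ``complexified'' directions. The discriminant components correspond to the two toric divisors cut out by $1+z=0$ and $1+w=0$, and the monodromy is computed by the standard focus-focus / almost-toric rule: crossing the wall produces a shear by the primitive normal covector to the corresponding edge of the defining trapezoid, with multiplicity equal to the affine length ($k$ or $l$) of that edge. I would phrase the argument this way and simply invoke the almost-toric monodromy computation of Leung–Symington together with the explicit description of the trapezoid (vertices $(0,0,1),(k,0,1),(0,1,1),(l,1,1)$) already recorded in the excerpt, reading off that the two edges meeting the ``vertical'' direction have lattice lengths $k$ and $l$.

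The main obstacle I expect is the careful sign and basis bookkeeping at the point where one must decide whether passing around a given discriminant component stays inside $U_+$ or requires transiting through $U_-$ (and hence re-expressing $\tilde\gamma_i$ in terms of $\gamma_i$); getting the direction of the shear and the sign of $k$ right is the only genuinely delicate part, and everything else is a routine unwinding of definitions.
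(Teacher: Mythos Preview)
Your overall strategy --- compute winding numbers forced by the constraint $xy=(1+z)^k(1+w)^l$ as one transports the fiber around a component of the discriminant --- is the same as the paper's. But your description of the base loop is confused, and this is not just a cosmetic issue. You write that a loop around $\{0\}\times\R\times\{0\}$ is one ``where $1+w$ winds once around $0$ while $1+z$ and $\mu_{\bS^1}$ stay fixed at generic values.'' This is wrong on both counts: the base coordinates are $(\mu_{\bS^1},\log|z|,\log|w|)$, so a small loop encircling the line $\{0\}\times\R\times\{0\}$ must vary \emph{both} $\mu_{\bS^1}$ and $\log|w|$ (it is a circle in the $(a,c)$-plane about the origin), and $w$ itself does not wind --- only $|w|$ moves, crossing $1$. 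In particular $\mu_{\bS^1}$ necessarily changes sign along the loop, so you cannot stay in a region where $x\neq 0$ throughout; you are forced to pass between $U_+$ and $U_-$.

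The paper handles exactly this by making the two-chart structure explicit. It records, in each of the four chambers of $U_+\cap U_-$ (the quadrants $\R\times\R_\pm\times\R_\pm$), the relation between $[\gamma_i]$ and $[\tilde\gamma_i]$ coming from the winding of $xy=(1+z)^k(1+w)^l$; then the monodromy is read off as the composition ``start in one chamber in $U_+$, cross to an adjacent chamber still in $U_+$, switch to the $\tilde\gamma$ basis, cross back in $U_-$, switch back.'' Your ``alternative'' almost-toric route could be made to work, but would also require identifying the correct focus--focus lines in the base (again varying $\mu_{\bS^1}$), so it does not avoid the issue. Fix the description of the loop and use the $U_\pm$ transition explicitly; after that your winding-number bookkeeping goes through and gives the stated shears.
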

\begin{proof}
First we consider $G_{k,l}$.  $U_+ \cap U_-$ consists of four connected components which we call chambers.  We have the following by considering the winding numbers of the variables $x,y,z,w$ with the constraint $xy=(1+z)^k(1+w)^l$.  For a base point in a chamber,
\begin{align*}
[\gamma_1]=[\tilde{\gamma}_1], [\gamma_2]=[\tilde{\gamma}_2],[\gamma_3]=[\tilde{\gamma}_3] \textrm{ in } \R \times \R_- \times \R_-;\\
[\gamma_1]=[\tilde{\gamma}_1], [\gamma_2]=[\tilde{\gamma}_2]-k[\tilde{\gamma}_1],[\gamma_3]=[\tilde{\gamma}_3] \textrm{ in } \R \times \R_+ \times \R_-;\\
[\gamma_1]=[\tilde{\gamma}_1], [\gamma_2]=[\tilde{\gamma}_2],[\gamma_3]=[\tilde{\gamma}_3]-l[\tilde{\gamma}_1] \textrm{ in } \R \times \R_- \times \R_+;\\
[\gamma_1]=[\tilde{\gamma}_1], [\gamma_2]=[\tilde{\gamma}_2]-k[\tilde{\gamma}_1],[\gamma_3]=[\tilde{\gamma}_3]-l[\tilde{\gamma}_1] \textrm{ in } \R \times \R_+ \times \R_+.
\end{align*} 

Now take a loop around $\{0\} \times \R \times \{0\}$.  It goes from the chamber $\R \times \R_+ \times \R_+$ to $\R \times \R_+ \times \R_-$ in $\R_+ \times \R \times \R$, and then goes back to the original chamber in $\R_- \times \R \times \R$.  In $\R_+ \times \R \times \R$ we use the basis $[\gamma_i]$; in $\R_- \times \R \times \R$ we use the basis $[\tilde{\gamma_i}]$ for $i=1,2,3$.  Then the monodromy around the loop is $[\gamma_1] \mapsto [\gamma_1]$, $[\gamma_2] \mapsto [\gamma_2]$ and $[\gamma_3] \mapsto [\gamma_3] + l [\gamma_1]$.  The computation for the monodromy around $\{0\} \times \{0\} \times \R$ is similar.
\end{proof}

For $O_{k,l}=\{u_1v_1=(1+z)^k,u_2v_2=(1+z)^l\}$, we take four contractible open subsets $U_{\pm\pm}$ covering $B_0$, where
\begin{align*}
U_{++} =& ((\R-\{0\})\times \R\times\R) \cup (\R \times \R_{>0} \times \R_{>0});\\
U_{+-} =& ((\R-\{0\})\times \R\times\R) \cup (\R \times \R_{>0} \times \R_{<0});\\
U_{-+} =& ((\R-\{0\})\times \R\times\R) \cup (\R \times \R_{<0} \times \R_{>0});\\
U_{--} =& ((\R-\{0\})\times \R\times\R) \cup (\R \times \R_{<0} \times \R_{<0}).
\end{align*}

We take a basis over $(a,b,c) \in U_{++}$ to be:
\begin{align*}
\gamma_1(t)=\gamma^{++}_1(t) &\textrm{ defined by } z=e^a e^{2\pi\consti t},u_1, u_2 \in \R_+;\\
\gamma_2(t)=\gamma^{++}_2(t) &\textrm{ defined by } z=e^a,u_1=r e^{2\pi\consti t} \textrm{ for } r \in \R_+,u_2 \in \R_+;\\ 
\gamma_3(t)=\gamma^{++}_3(t) &\textrm{ defined by } z=e^a,u_1\in \R_+,u_2=r e^{2\pi\consti t} \textrm{ for } r \in \R_+
\end{align*}
under the constraints $|u_1|^2-|v_1|^2=b$ and $|u_1|^2-|v_1|^2=c$.
It is well-defined since $u_1,u_2 \not= 0$ over $U_{++}$.
We replace $(u_1,u_2)$ by $(u_1,v_2^{-1})$ for $U_{+-}$, by $(v_1^{-1},u_2)$ for $U_{-+}$, and by $(v_1^{-1},v_2^{-1})$ for $U_{--}$.  Then we have a basis $\{\gamma^{\pm\pm}_i:i=1,2,3\}$ over each open set.

\begin{prop}[Monodromy for $O_{k,l}$] \label{prop:mon-orbcfd}
For the Lagrangian fibration on $O_{k,l}$, the monodromy around $\{0\} \times \R \times \{0\}$ is 
$[\gamma_1] \mapsto [\gamma_1] - l [\gamma_3]$, $[\gamma_2] \mapsto [\gamma_2]$ and $[\gamma_3] \mapsto [\gamma_3]$;
the monodromy around $\{0\} \times \{0\} \times \R$ is $[\gamma_1] \mapsto [\gamma_1] + k [\gamma_2]$, $[\gamma_2] \mapsto [\gamma_2]$ and $[\gamma_3] \mapsto [\gamma_3]$.
\end{prop}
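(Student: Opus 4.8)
The plan is to follow the proof of Proposition~\ref{prop:mon-gencfd} closely, now with the four charts $U_{\pm\pm}$ and their frames $\{\gamma^{\pm\pm}_i\}$. Since $(\log|z|,\nu_{\mathbb{T}^2})$ is an honest Lagrangian fibration on $O_{k,l}$ (unlike the $\tilde G_{k,l}$ case, no Moser modification intervenes), the monodromy only sees the smooth torus bundle over $B_0$, so the computation is purely a matter of tracking winding numbers. Conceptually the answer should be: over the component $\{0\}\times\R\times\{0\}$ (the locus $\log|z|=0,\ c=0$) the vanishing cycle is $[\gamma_3]$ (the class of the second circle factor of the $\mathbb{T}^2$-orbit), which degenerates at $z=-1,\ u_2=v_2=0$; over $\{0\}\times\{0\}\times\R$ the vanishing cycle is $[\gamma_2]$; and in each case the monodromy fixes the vanishing cycle, fixes the remaining $\mathbb{T}^2$-orbit class, and shears $[\gamma_1]$ (the $z$-circle) by a multiple of the vanishing cycle, the multiple being the exponent ($l$, resp.\ $k$) appearing in $u_2v_2=(1+z)^l$, resp.\ $u_1v_1=(1+z)^k$.

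To make this precise I would first record the overlap combinatorics: any two of the $U_{\pm\pm}$ meet exactly in $(\R-\{0\})\times\R\times\R$, which has two connected components according to the sign of $a:=\log|z|$; call these the $a>0$ and $a<0$ chambers. On such a chamber all four frames are defined, and to compare them it is convenient to express each loop $\gamma^{\pm\pm}_i$ in the basis of $H_1$ of the fiber given by the winding numbers of $(z,u_1,u_2)$. The one local input needed is that the circle $\{|z|=e^{a}\}$ encloses $z=-1$ precisely when $a>0$, so that a once-around loop of $z$ makes $1+z$ wind once around $0$ when $a>0$ and not at all when $a<0$; combined with $u_1v_1=(1+z)^k$ and $u_2v_2=(1+z)^l$ this determines the winding of every coordinate along every $\gamma^{\pm\pm}_i$. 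The outcome is that in the $a<0$ chamber all four frames agree, while in the $a>0$ chamber one finds
\[
\gamma^{+-}_1=\gamma^{++}_1+l\,\gamma^{++}_3,\qquad \gamma^{-+}_1=\gamma^{++}_1+k\,\gamma^{++}_2,\qquad \gamma^{--}_1=\gamma^{++}_1+k\,\gamma^{++}_2+l\,\gamma^{++}_3,
\]
and $\gamma^{\pm\pm}_2=\gamma^{++}_2$, $\gamma^{\pm\pm}_3=\gamma^{++}_3$ throughout. (The $l$-shift is exactly the phenomenon that, on $U_{+-}$, keeping $v_2$ real instead of $u_2$ shifts the full $l$-fold winding of $(1+z)^l$ onto the $u_2$-factor; symmetrically for $k$ on $U_{-+}$.)

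Next I would assemble the monodromy around $\{0\}\times\R\times\{0\}$ by running a small loop in the $(\log|z|,c)$-plane at a fixed $b=b_0\neq 0$: for $b_0>0$ this loop is covered by $U_{++}$ on the $c\ge 0$ side and $U_{+-}$ on the $c\le 0$ side, the two chart changes happening once in the $a<0$ chamber (where $\gamma^{+-}=\gamma^{++}$, so nothing happens) and once in the $a>0$ chamber (where $\gamma^{+-}_1=\gamma^{++}_1+l\gamma^{++}_3$). Composing these two transitions around the loop returns the frame changed only in the first slot, giving $[\gamma_1]\mapsto[\gamma_1]-l[\gamma_3]$ with $[\gamma_2],[\gamma_3]$ fixed; taking $b_0<0$ instead (and the pair $U_{-+},U_{--}$) gives the same transformation, the promised consistency check. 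The monodromy around $\{0\}\times\{0\}\times\R$ comes out the same way from the pair $U_{++},U_{-+}$ (or $U_{+-},U_{--}$) and yields $[\gamma_1]\mapsto[\gamma_1]+k[\gamma_2]$, the others fixed.

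The magnitudes ($k$, $l$) and the directions of the shears fall out of the winding count immediately; the delicate point is the sign bookkeeping — extracting the signs $-l$ and $+k$ rather than $+l$, $-k$ requires orienting the meridian loops and choosing the direction of each chart transition consistently with the conventions of Proposition~\ref{prop:mon-gencfd}, and this is where I expect to spend the most care. As an independent check, the formulas above are exactly the inverse transposes of those of Proposition~\ref{prop:mon-gencfd}, as must be the case for the dual Lagrangian fibrations related by the SYZ duality of Theorem~\ref{localSYZ}; indeed one could alternatively \emph{deduce} Proposition~\ref{prop:mon-orbcfd} from Proposition~\ref{prop:mon-gencfd} this way, once the two fibration bases and the dual bases of $H_1$ are matched up carefully.
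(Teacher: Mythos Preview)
Your proposal is correct and follows essentially the same approach as the paper: you compare the four frames $\{\gamma_i^{\pm\pm}\}$ on the two chambers $\R_\pm\times\R\times\R$ of the common overlap, obtain the identical transition relations (the paper writes them as $[\gamma_1^{++}]=[\gamma_1^{+-}]-l[\gamma_3^{+-}]$ etc., equivalent to your $\gamma^{+-}_1=\gamma^{++}_1+l\gamma^{++}_3$), and then trace a loop through the charts $U_{++}$ and $U_{+-}$ to read off the monodromy. Your extra remarks on the vanishing-cycle interpretation and the inverse-transpose consistency with Proposition~\ref{prop:mon-gencfd} are pleasant sanity checks that go a bit beyond what the paper records, but the core argument is the same.
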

\begin{proof}
The intersection of all the open sets is the union of the two disjoint open subsets $\R_\pm \times \R \times \R$.  In the chamber $\R_- \times \R \times \R$, for each $i$ we have $[\gamma_i^{\pm\pm}]$ to be all the same.  In the chamber $\R_+ \times \R \times \R$, for $i=2,3$, we have $[\gamma_i^{\pm\pm}]$ to be all the same; for $i=1$, $[\gamma_1^{++}]=[\gamma_1^{-+}]-k[\gamma_2^{-+}]=[\gamma_1^{+-}]-l[\gamma_3^{+-}]=[\gamma_1^{--}]-k[\gamma_2^{--}]-l[\gamma_3^{--}]$.  Now consider the monodromy around $\{0\} \times \R \times \{0\}$.  We start with the basis $[\gamma_i^{++}]$ in the chamber $\R_+ \times \R \times \R$, change to the basis $[\gamma_i^{+-}]$, move to the chamber $\R_- \times \R \times \R$ in $U_{+-}$, change back to the original basis, and move back to the original chamber in $U_{++}$.  Obviously $[\gamma_2^{++}],[\gamma_3^{++}]$ remains the same.  We have $[\gamma_1^{++}]\mapsto [\gamma_1^{++}] - l [\gamma_3^{++}]$.  The monodromy computation around $\{0\} \times \{0\} \times \R$ is similar.
\end{proof}

\section{Local affine geometries} \label{sec:loc-aff}

In this section, we formulate the local singularities, their resolutions and smoothings in the language of affine geometry.  Such a formulation has the great advantage that it can be easily globalized, thanks to the groundbreaking works of Gross-Siebert \cite{GS1,GS2,GS07}.

The notion of a tropical manifold is central in the Gross-Siebert program.  We briefly recall it below.

\begin{defn}
A polarized tropical manifold is a triple $(B,\cP,\phi)$, where $B$ is an integral affine manifold with singularities, $\cP$ is a toric polyhedral decomposition of $B$ (where the singularities occur in facets of polyhedrons in $\cP$),
and $\phi$ is a strictly convex multivalued piecewise linear function on $B$.  
\end{defn}

In their reconstruction program they also assume $(B,\cP,\phi)$ is positive and simple.  In dimension three it means that $\Delta$ is a trivalent graph and the vertices of $\Delta$ are of either positive or negative types (with certain simple monodromies of the affine connection around each edge), this is the case $d=1$ in Section \ref{sec:orb_vert}.  

Gross-Siebert defined discrete Legendre transform which associates a polarized tropical manifold $(B,\cP,\phi)$ to another one $(\check{B},\check{\cP},\check{\phi})$, with the property that the discrete Legendre transform of $(\check{B},\check{\cP},\check{\phi})$ goes back to $(B,\cP,\phi)$.  Their groundbreaking work constructs a toric degeneration of a Calabi-Yau manifold $X$ from each compact positive and simple polarized tropical manifold.  The Calabi-Yau manifolds associated to a polarized tropical manifold and its discrete Legendre transform form a mirror pair.

By gluing local models of Lagrangian fibrations with prescribed discriminant loci, Castano-Bernard and Matessi \cite{CM1} constructed a symplectic manifold (equipped with a Lagrangian fibration and a Lagrangian section) which contains $T^* B_0 / \Lambda^*$ as an open subset.  Here $B_0 = B - \Delta$ and $\Lambda^* \subset T^* B_0$ is a fiberwise lattice given by the affine structure.  In \cite{CM2}, they performed conifold transitions for the symplectic manifolds constructed from polarized tropical manifolds.

In this paper we follow the method of Castano-Bernard and Matessi \cite{CM1,CM2} to construct orbi-conifold transitions from an affine manifold with singularities.  First we construct the affine structures for the local generalized conifolds, orbifolded conifolds, and their resolutions and smoothings.  Then in Section \ref{sec:Schoen} we consider global affine structures with generalized or orbifolded conifold points.  The discriminant locus has four-valent vertices, and we also relax the simplicity condition (which allows orbifolded singularities).

Throughout this section $\{e_i:i=1,\ldots,n\}$ denotes the standard basis of $\R^n$.

\subsection{Local affine $A_{k-1}$ singularity in dimension two}
Let's begin with the local $A_{k-1}$ singularity in dimension two, where $k>1$.  

\begin{defn}[Affine $A_{k-1}$ singularity]
A singular point in an oriented affine surface is called an affine $A_{k-1}$ singularity (for $k>1$) if in a certain oriented basis, it has monodromy $\left(\begin{array}{cc}
1 & k \\
0 & 1
\end{array}\right)$.
A singular point with such monodromy (even when $k\leq 1$) is said to have multiplicity $k$.  When $k=\pm 1$ it is called simple.
\end{defn}

We can cook up two tropical manifolds with an affine $A_{k-1}$ singularity.  They are related to each other by discrete Legendre transform.  This reflects the fact that the $A_{k-1}$ singularity is self-mirror.

Define the tropical manifold $(B,\cP,\phi)$ as follows.  Take the lattice triangle 
$$\Conv\{(0,0),(0,k),(-1,k)\}$$ 
and the rectangle 
$$\Conv\{(0,0),(0,k),(1,0),(1,k)\}.$$ 
Glue them along the edge $\Conv\{(0,0),(0,k)\}$.  Then we have a manifold $B$ with corners and a polyhedral decomposition $\cP$.  See the top middle of Figure \ref{fig:affA_n}.  The fan structures at vertices are given as follows.  Denote the standard basis of $\R^2$ by $\{e_1,e_2\}$.  We have the fan generated by the primitive vectors $e_1,e_2,-e_1$.  At the vertex $(0,0)$, the tangent vectors $(1,0),(0,1),(-1,k)$ of the polytopes are mapped to the primitive generators $e_1,e_2,-e_1$ respectively.  At the vertex $(0,k)$, the tangent vectors $(-1,0),(0,-1),(1,0)$ are mapped to the primitive generators $e_1,e_2,-e_1$ respectively.  The multivalued piecewise linear function $\phi$ is defined by 
\begin{equation} \label{eq:phi1}
\phi(x,y) = \left\{ \begin{array}{ll}
x & x \geq 0 \\
0 & x \leq 0
\end{array}\right.
\end{equation}
in the fan generated by $\{e_1,e_2,-e_1\}$ at each of the two vertices.

\begin{figure}[h]
\begin{center}
\includegraphics[scale=0.7]{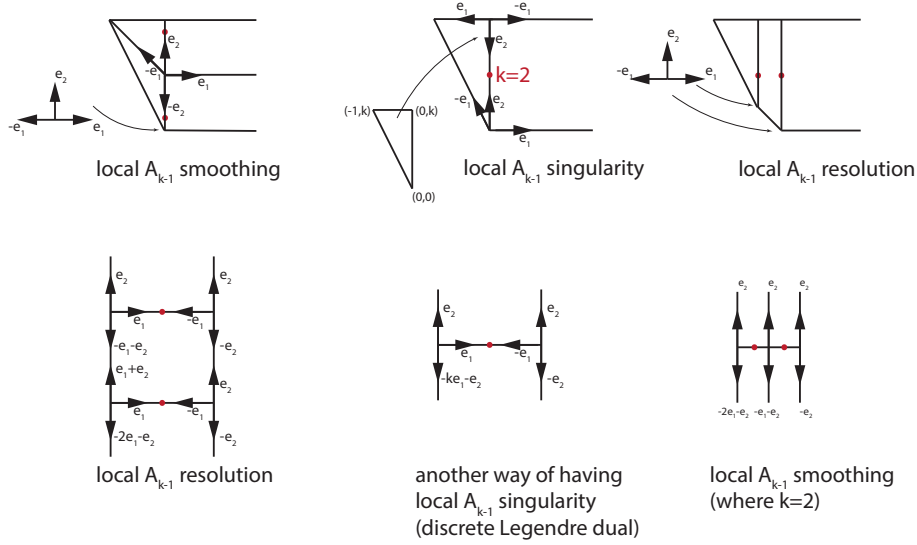}
\caption{Affine structures of the $A_{k-1}$ singularity, its resolution and smoothing.}
\label{fig:affA_n}
\end{center}
\end{figure}

The discriminant locus is a point $\Delta=\{p\}$ in the edge $\Conv\{(0,0),(0,k)\}$.  In other words $B-\Delta$ has an affine structure induced from the fans at the vertices.  The monodromy around $p$ is given as follows.  

\begin{prop}
For the tropical manifold $(B,\cP,\phi)$ given above, the monodromy matrix around $p$ in the standard basis $\{e_1,e_2\}$ at $(0,0)$ is
$\left(\begin{array}{cc}
1 & 0 \\
k & 1
\end{array}\right).$
\end{prop}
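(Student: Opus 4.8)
The plan is to compute the monodromy directly by tracking how the two fan charts at the vertices $(0,0)$ and $(0,k)$ are glued across the interior of the edge $e := \Conv\{(0,0),(0,k)\}$, with the single discriminant point $p$ removed. First I would fix the two charts: near $(0,0)$ the affine chart is the fan generated by $e_1, e_2, -e_1$ with the assignment $(1,0)\mapsto e_1$, $(0,1)\mapsto e_2$, $(-1,k)\mapsto -e_1$; near $(0,k)$ it is the fan generated by the same vectors with $(-1,0)\mapsto e_1$, $(0,-1)\mapsto e_2$, $(1,0)\mapsto -e_1$. On the open triangle side (the maximal cell $\Conv\{(0,0),(0,k),(-1,k)\}$) both charts must agree on an overlap, and on the open rectangle side (the maximal cell $\Conv\{(0,0),(0,k),(1,0),(1,k)\}$) they must also agree; these two identifications differ, and their discrepancy is exactly the monodromy around $p$.

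The key computation is to write down the change-of-basis matrix on each side. On the rectangle side, the edge direction $(0,k)-(0,0) = (0,k)$ is sent to $e_2$ in the $(0,0)$-chart, and the edge direction $(0,0)-(0,k)=(0,-k)$ is sent to $e_2$ in the $(0,k)$-chart; since these are negatives of each other in $B$ but both become $e_2$, the transition on this side is (up to the sign bookkeeping forced by reversing the edge orientation) the identity on the edge direction, and one reads off that it is the identity matrix in $\{e_1,e_2\}$ — this is the side where the fan structures were set up to be compatible. On the triangle side the $(-1,k)$ ray of the $(0,0)$-fan corresponds to $-e_1$, while in the $(0,k)$-fan the image of the triangle's third edge direction $(-1,0)$ (i.e. $(-1,k)-(0,k)$) is $e_1$; comparing the two parametrizations of the shared $2$-dimensional cell and transporting the basis $\{e_1,e_2\}$ from $(0,0)$ around through the triangle side to $(0,k)$ and back along the rectangle side produces the composite, which I expect to come out as $\left(\begin{smallmatrix} 1 & 0 \\ k & 1\end{smallmatrix}\right)$. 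Concretely I would express $e_1, e_2$ in the $(0,k)$-chart via the triangle gluing, then re-express the result in the $(0,0)$-chart via the rectangle gluing, and read the resulting matrix; the integer $k$ enters precisely because the triangle has height $k$ and its slanted edge is $(-1,k)$.

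The main obstacle is purely a matter of orientation and ordering conventions: getting the direction of the loop around $p$ consistent with the chosen orientation of $B$, and being careful that reversing the edge $e$ when passing from the $(0,0)$-based description to the $(0,k)$-based one does not introduce a spurious sign that would flip the off-diagonal entry to $-k$ or transpose the matrix. I would resolve this by fixing once and for all a counterclockwise loop in the chosen orientation and checking the answer against the mirror expectation: the $A_{k-1}$ singularity is self-mirror and the companion tropical manifold (the "resolution/smoothing" picture) should have the transpose monodromy $\left(\begin{smallmatrix} 1 & k \\ 0 & 1\end{smallmatrix}\right)$ appearing in the Definition of an affine $A_{k-1}$ singularity, so a sign or transpose error would be immediately visible. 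Everything else — trivializing the torus bundle over the two simply connected pieces of $B - \Delta$, matching primitive generators to polytope edge vectors — is routine linear algebra once the charts are pinned down.
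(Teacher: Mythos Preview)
Your proposal is correct and follows essentially the same route as the paper: fix the fan charts at the two vertices $(0,0)$ and $(0,k)$, transport the basis $\{e_1,e_2\}$ from one vertex to the other through the rectangle and back through the triangle, and read off the composite. The paper carries this out more tersely by noting that $e_2$ is visibly invariant and tracking only $e_1$: along the rectangle $e_1$ becomes $-e_1$ in the $(0,k)$-chart, and then returning through the triangle the identity $(1,0)=-(-1,k)+k(0,1)$ forces $-e_1\mapsto e_1+k e_2$, giving the claimed matrix. Your worry about the $-I$ on the rectangle side is exactly the ``sign bookkeeping'' the paper absorbs silently; once you actually write out both transitions you will see the two minus signs cancel.
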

\begin{proof}
Consider a loop starting from the vertex $(0,0)$, going to the vertex $(0,k)$ in the rectangle, and going back to the vertex $(0,0)$ in the triangle.  It is easy to see that the vector $e_2$ of the fan at the vertex $(0,0)$ is monodromy invariant.  Consider the vector $e_1$.  Transporting it to the vertex $(0,k)$ in the rectangle, it is identified with $-e_1$ in the fan at the vertex $(0,k)$.  Transporting it back to the vertex $(0,0)$ in the triangle, it is identified with $e_1+ke_2$ in the fan at the vertex $(0,0)$ (since $(1,0) = -(-1,k)+k(0,1)$).  Thus $e_1$ is sent to $e_1+ke_2$ under monodromy.  
\end{proof}

When $k=1$ the singularity is removable, and the affine manifold is simple in the sense of Definition 1.60 of \cite{GS1}.  However the monodromy is no longer simple for $k>1$.

Next, we define $(\check{B},\check{\cP})$ by gluing two squares
$$\Conv\{(0,0),(1,0),(0,1),(1,1)\} \textrm{ and } \Conv\{(0,0),(1,0),(0,-1),(1,-1)\}$$ 
along the edge $\Conv\{(0,0),(1,0)\}$.  See the bottom middle of Figure \ref{fig:affA_n}.  The fan at the vertex $(0,0)$ is given by mapping the tangent vectors $(0,-1),(1,0),(0,1)$ to $-ke_1-e_2, e_1, e_2$ respectively.  The fan at the vertex $(1,0)$ is given by mapping the tangent vectors $(0,1),(-1,0),(0,-1)$ to $e_2,-e_1,-e_2$ respectively.  The discriminant locus is a point $\Delta=\{p\}$ in the edge $\Conv\{(0,0),(1,0)\}$.  One can similarly check that the monodromy is given as follows and the proof is omitted.

\begin{prop}
For $(\check{B},\check{\cP})$, the monodromy matrix around $p$ in the standard basis $\{e_1,e_2\}$ at $(0,0)$ is
$\left(\begin{array}{cc}
1 & -k \\
0 & 1
\end{array}\right).$
\end{prop}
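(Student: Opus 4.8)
The plan is to mimic the proof of the previous proposition, computing the monodromy by parallel transport around a loop through the two squares that make up $\check{B}$. First I would fix the loop: start at the vertex $(0,0)$, travel through the upper square $\Conv\{(0,0),(1,0),(0,1),(1,1)\}$ to the vertex $(1,0)$, then return to $(0,0)$ through the lower square $\Conv\{(0,0),(1,0),(0,-1),(1,-1)\}$. Since the discriminant point $p$ lies on the shared edge $\Conv\{(0,0),(1,0)\}$, this loop encircles $p$ and the induced affine parallel transport is well-defined on $\check{B}-\Delta$ using the fan structures at the two vertices.

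Next I would track the two basis vectors $e_1,e_2$ of the fan at $(0,0)$. The edge direction $(1,0)$ along the shared edge is sent to $e_1$ at $(0,0)$ and, at the vertex $(1,0)$, the incoming edge direction $(-1,0)$ is sent to $-e_1$; hence $e_1$ is transported to $e_1$ around the loop — it is monodromy invariant, being tangent to the edge containing $p$. For $e_2$: transporting $e_2$ from $(0,0)$ into the upper square and over to $(1,0)$, it is identified with the primitive generator $e_2$ there (the tangent vector $(0,1)$ of the upper square maps to $e_2$ at both vertices). Coming back through the lower square, the tangent vector $(0,-1)$ of the lower square maps to $-ke_1-e_2$ in the fan at $(0,0)$, so $-e_2$ (the direction into the lower square at $(1,0)$, which maps to $-e_2$ there) is identified, after transport to $(0,0)$, with $-ke_1-e_2$; equivalently $e_2 \mapsto ke_1 + e_2$. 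Combining, the monodromy sends $e_1 \mapsto e_1$ and $e_2 \mapsto ke_1+e_2$, which in the basis $\{e_1,e_2\}$ is the matrix $\left(\begin{array}{cc} 1 & k \\ 0 & 1 \end{array}\right)$; traversing the loop in the opposite orientation (matching the orientation convention used for the statement) yields the stated inverse $\left(\begin{array}{cc} 1 & -k \\ 0 & 1 \end{array}\right)$.

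The only subtle point — the main obstacle, such as it is — is bookkeeping the orientation of the loop and the identifications at the vertex $(1,0)$ consistently, so that the sign of $k$ and the direction of the shear come out correctly; this is exactly the place where the previous proof, which the authors chose to write out in full, differs from this one only by the role of the two glued polytopes, and it is why they remark that the computation is analogous and omit it. I would present the argument in one short paragraph paralleling the proof of the first proposition, being careful to state which vertex's fan and which orientation of the loop the matrix is written with respect to, and then simply record that the remaining identifications are the routine ones dictated by the prescribed fan structures.
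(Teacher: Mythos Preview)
Your proposal is correct and is precisely the approach the paper intends: the authors write ``One can similarly check that the monodromy is given as follows and the proof is omitted,'' so the argument is meant to parallel the previous proposition's proof exactly as you have done. The one cosmetic improvement would be to choose the counterclockwise loop (lower square first, then upper) from the outset, matching the orientation used in the previous proposition, so that the matrix $\left(\begin{smallmatrix}1 & -k\\ 0 & 1\end{smallmatrix}\right)$ falls out directly without needing to invert at the end.
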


\begin{remark}
This is just equivalent to the affine structure of $(B,P)$ if we switch to the basis $(-e_2,e_1)$.
\end{remark}

Define the multivalued piecewise linear function $\check{\phi}$ by
$$
\check{\phi}(x,y) = \left\{ \begin{array}{ll}
0 & \textrm{ in } \R_{\geq 0}\{e_1,-k e_1 - e_2\} \\
ky & \textrm{ in } \R_{\geq 0}\{e_1,e_2\} \\
\end{array}\right.
$$
on the fan at the vertex $(0,0)$, and
$$
\phi(x,y) = \left\{ \begin{array}{ll}
0 & \textrm{ in } \R_{\geq 0}\{-e_1,-e_2\} \\
ky & \textrm{ in } \R_{\geq 0}\{-e_1,e_2\} \\
\end{array}\right.
$$
on the fan at the vertex $(1,0)$.  (The differentials, or so called discrete Legendre transform, give the corners of the polytopes of $(B,P)$.)

It is easy to check the following.

\begin{prop}
$(B,\cP,\phi)$ and $(\check{B},\check{\cP},\check{\phi})$ given above are discrete Legendre dual to each other.
\end{prop}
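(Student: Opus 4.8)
The plan is to verify directly the defining data of the discrete Legendre transform of Gross--Siebert \cite{GS1}, in the form adapted to tropical manifolds with boundary used in \cite{GS07,CM1,CM2}. Recall that such a transform of $(B,\cP,\phi)$ consists of: a dimension-complementary, inclusion-reversing correspondence $\tau\mapsto\check\tau$ between the (essential) cells of $\cP$ and $\check\cP$; an identification, for each maximal cell $\sigma\in\cP$, of the fan structure of $\check B$ at the vertex $\check\sigma$ with the normal fan of $\sigma$, and dually; and a strictly convex multivalued piecewise linear function $\check\phi$ whose differentials at the vertices of $\check B$ reconstruct the polytopes and heights of $(B,\cP,\phi)$, and symmetrically. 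Since the construction is involutive, it suffices to establish these in one direction, say from $(B,\cP,\phi)$ to $(\check B,\check\cP,\check\phi)$, together with the reverse reconstruction of $\phi$ from $\check\phi$; the remaining step $(\check B,\check\cP,\check\phi)\mapsto(B,\cP,\phi)$ is then formal.

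First I would fix the cell correspondence: the two maximal cells $\Conv\{(0,0),(0,k),(-1,k)\}$ and $\Conv\{(0,0),(0,k),(1,0),(1,k)\}$ of $\cP$ correspond to the two vertices $(0,0)$ and $(1,0)$ of $\check\cP$ (the two vertices carrying a nontrivial fan structure); the interior singular edge $\Conv\{(0,0),(0,k)\}$ of $\cP$ corresponds to the interior singular edge $\Conv\{(0,0),(1,0)\}$ of $\check\cP$; its two endpoints correspond to the two squares of $\check\cP$; and the remaining boundary edges and corners are matched by the evident flat identifications. One checks this is order-reversing and dimension-complementary. The duality of the affine structures on $B-\Delta$ and $\check B-\check\Delta$ is then exactly the content of the two monodromy propositions already established above: the monodromies around the singular point, $\left(\begin{smallmatrix}1&0\\k&1\end{smallmatrix}\right)$ on $B$ and $\left(\begin{smallmatrix}1&-k\\0&1\end{smallmatrix}\right)$ on $\check B$, are transpose-inverse to one another, which is precisely what the discrete Legendre transform demands of the transition functions on the complement of the discriminant.

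Next, the polarization. From \eqref{eq:phi1}, at each of the vertices $(0,0)$ and $(0,k)$ of $B$ the function $\phi$ has differential $0$ on the cone coming from the triangle and $e_1^\ast$ on the cone coming from the rectangle, so the kink of $\phi$ across the singular edge $\Conv\{(0,0),(0,k)\}$ is $1$, equal to the lattice length of the dual edge $\Conv\{(0,0),(1,0)\}$; dually, $\check\phi$ has differentials $0$ and $ke_2^\ast$ on the two cones at each of $(0,0)$ and $(1,0)$ in $\check B$, so its kink across $\Conv\{(0,0),(1,0)\}$ is $k$, equal to the lattice length of $\Conv\{(0,0),(0,k)\}$. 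Reading off the differentials of $\check\phi$ on the maximal cones of the fans at $(0,0)$ and $(1,0)\in\check B$ recovers the vertices of the triangle and of the rectangle in their correct affine positions, and the gluing prescribed by $\check\cP$ reassembles $(B,\cP)$; together with the kink computation this is exactly the ``differentials give the corners of the polytopes'' remark preceding the statement, and it identifies the discrete Legendre transform of $(\check B,\check\cP,\check\phi)$ with $(B,\cP,\phi)$. Strict convexity of both $\phi$ and $\check\phi$ across their singular edges is immediate from the positivity of the kinks $1$ and $k$.

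The only delicate point is the boundary bookkeeping: because $B$ and $\check B$ are manifolds with corners, ``normal fan'' and ``differential/Newton polytope'' must be read in the boundary-truncated sense of \cite{GS07,CM1}, so the extra boundary cells of $\check\cP$ that have no counterpart among the cells of $\cP$ (and vice versa) have to be matched by hand, and one must check that the singular point $p$ sits at a consistent position on both interior edges. Once these conventions are fixed, the interior content is the single linear identity $(1,0)=-(-1,k)+k(0,1)$ used already, together with its mirror on the $\check\cP$ side, so the verification is routine --- which is why the author flags it as easy.
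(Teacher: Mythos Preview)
Your proof is correct and takes essentially the same approach as the paper: direct verification that the polytopes of $(\check B,\check\cP)$ arise as the differentials of $\phi$ at the vertices of $(B,\cP)$, that the fan structures are the normal fans of the corresponding polytopes, and vice versa. You supply considerably more detail than the paper---the explicit cell correspondence, the transpose-inverse monodromy check, and the kink computations---but these are elaborations of the same two-line argument the author gives.
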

\begin{proof}
One can directly check that the polytopes in $(\check{B},\check{\cP})$ are Legendre dual polytopes of the piecewise linear function $\phi$ around vertices of $(B,\cP)$ (by taking the differential of $\phi$ restricted on each cone), and vice versa.  Moreover the fan structure at each vertex of $(\check{B},\check{\cP})$ is given by the normal fan of the corresponding polytope in $(B,\cP)$, and vice versa.
\end{proof}

\begin{remark} \label{rem:edge}
By taking a product of the affine $A_{k-1}$ singularity $(B,\cP)$ (or $(\check{B},\check{\cP})$) with the affine line $\R$, one obtains a tropical threefold with discriminant locus being a line.  The multiplicity of such a discriminant locus is defined to be $k$.
(As in \cite{CM1,CM2}, one can perturb the discriminant locus to be a curve.)
\end{remark}

\subsection{Smoothing and resolution of a local affine $A_{k-1}$ singularity}
The tropical manifolds corresponding to the resolution and smoothing of an $A_{k-1}$ singularity are given by the left and right sides of Figure \ref{fig:affA_n}.  The singularity in the affine base (which has multiplicity $k$) separates into $k$ simple singularities.  In a smoothing the simple singularities lie in the same monodromy-invariant affine hyperplane (which is formed by some edges of the polytopes in the decomposition).  A resolution is Legendre dual to a smoothing.

The readers can easily understand the polyhedral decompositions and fan structures from the figures and we omit the detailed descriptions.  The monodromy around each critical point is simple, namely it equals 
$\left(\begin{array}{cc}
1 & \pm 1 \\
0 & 1
\end{array}\right)$
up to conjugation.

For the fan generated by $\{e_1,e_2,-e_1\}$, the restriction of the multivalued piecewise linear function is given by Equation \eqref{eq:phi1}; for that generated by $\{e_1,e_2,-e_1,-e_2\}$ in the top left of Figure \ref{fig:affA_n}, it is given by
$$
\phi(x,y) = \left\{ \begin{array}{ll}
x+y & x, y \geq 0 \\
x & x \geq 0 \textrm{ and } y \leq 0 \\
y & y \geq 0 \textrm{ and } x \leq 0 \\
0 & x, y \leq 0;
\end{array}\right.
$$
for that generated by $\{e_1,e_2,-e_1,-j e_1 - e_2\}$ in the bottom right of Figure \ref{fig:affA_n}, it is given by
$$
\phi(x,y) = \left\{ \begin{array}{ll}
0 & \textrm{ in } \R_{\geq 0}\{e_1,-j e_1 - e_2\} \\
\frac{(j+k)(j+k-1)y}{2} & \textrm{ in } \R_{\geq 0}\{e_1,e_2\} \\
-x+\frac{(j+k)(j+k-1)y}{2} & \textrm{ in } \R_{\geq 0}\{e_2,-e_1\} \\
-x+j y & \textrm{ in } \R_{\geq 0}\{-e_1,-j e_1 - e_2\} \\
\end{array}\right.
$$
$\phi$ restricted to other strata are similar.

\begin{remark} \label{rem:subdivide}
At the vertex $(-1,k)$ of $(B,\cP)$ for the local $A_{k-1}$ singularity, we have an orbifolded fan structure (see the top middle of Figure \ref{fig:affA_n}).  The same happens for its local $A_{k-1}$ resolution (top right of Figure \ref{fig:affA_n}).  It can be easily resolved by a subdivision of the polyhedral decomposition.  See Figure \ref{fig:affA_nres}.  The fan structures at the additional vertices are taken to be trivial.
\end{remark}

\begin{figure}[h]
\begin{center}
\includegraphics[scale=0.6]{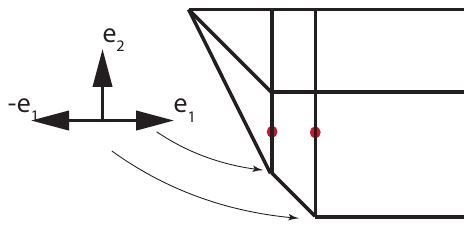}
\caption{A subdivision to resolve the orbifold fan.  It shows the case for $k=2$, and it is similar for general $k$.}
\label{fig:affA_nres}
\end{center}
\end{figure}

\begin{remark} \label{rem:edge-sm}
Again by taking a product with $\R$, one obtains a smoothing or a resolution of the singularity given in Remark \ref{rem:edge}.
\end{remark}

\subsection{Lagrangian fibration on local $A_{k-1}$ singularity}
The $A_{k-1}$ surface singularity is toric.  Its fan is the cone in $\R^2$ generated by $(0,1)$ and $(k,1)$.  Let $X$ be the corresponding toric variety.  Picking up a toric K\"ahler form, one has the moment map to $\R^2$ whose image is a (non-compact) polyhedral set.  Let $\mu_1$ be the horizontal component of the moment map.  Then 
\begin{equation} \label{eq:fib_An}
(\mu_1,\log|\nu-1|)
\end{equation}
gives a Lagrangian fibration on $X - \{\nu=1\}$ where $\nu$ is the toric holomorphic function corresponding to the $(0,1)$ lattice point.  The affine base of the fibration is isomorphic to the top middle of Figure \ref{fig:affA_n}.  See \cite{LLW} for more detail.

Alternatively we can take its mirror variety $\{(u,v,z)\in \C^2 \times \C^\times:uv=(1+z)^k\}$ which is again an $A_{k-1}$ singularity at $z=-1,u=v=0$.  
\begin{equation} \label{eq:fib_An'}
(|u|^2-|v|^2,\log|z|)
\end{equation}
gives a Lagrangian fibration whose affine base is isomorphic to the bottom middle of Figure \ref{fig:affA_n}.  

The affine smoothings and resolutions in Figure \ref{fig:affA_n} are base of Lagrangian fibrations on resolution and smoothing of the $A_{k-1}$ singularity.  We have the toric resolution whose fan is generated by $(j,1)$ for $j=0,\ldots,k$.  A smoothing is given by deforming the right hand side of the equation $uv=(1+z)^k$ such that all roots are simple.  Their Lagrangian fibrations are given by the same equations as above.

By taking product with $\C^\times$, we have Lagrangian fibrations whose base are isomorphic to the affine threefolds in Remark \ref{rem:edge} and \ref{rem:edge-sm}.  These Lagrangian fibrations serve as local models which can be glued to give more interesting geometries.

\subsection{Affine models for local generalized and orbifolded conifolds} \label{sec:aff-loc}
Now we construct tropical manifolds modeling the base of the Lagrangian fibrations in Section \ref{sec:SYZ}.  They will have the same discriminant loci and monodromies as in the last subsection.  The singularities of the local generalized and the orbifolded conifolds are said to be negative and positive respectively, according to the Euler characteristics of the corresponding singular Lagrangian fibers.  For $k=l=1$, this is the local affine geometry studied by \cite{CM2}.

First we consider the local generalized conifold.  Take the triangular prisms 
$$\Conv\{(0,0,0),(l,0,0),(0,0,k),(l,0,k),(0,-1,k),(l,-1,k)\}$$ 
and 
$$\Conv\{(0,0,0),(l,0,0),(0,0,k),(l,0,k),(0,1,0),(0,1,k)\}$$
and glue them together along the rectangle $\Conv\{(0,0,0),(l,0,0),(0,0,k),(l,0,k)\}$.  See the left of Figure \ref{fig:loc-orb-cfd}.

Take the fan given by the product of $\R_{\geq 0}\{e_1\}$ with the fan generated by $\{e_2,e_3,-e_2\}$ in the plane.  Then the fan at the vertex $(0,0,0)$ is given by mapping the tangent vectors $(1,0,0),(0,1,k),(0,0,1),(0,-1,0)$ to the generators $e_1,e_2,e_3,-e_2$ respectively.  The fans at the other three vertices $(l,0,0),(0,0,k),(l,0,k)$ are similar and the reader can understand from the left of Figure \ref{fig:loc-orb-cfd}.

\begin{figure}[h]
\begin{center}
\includegraphics[scale=0.7]{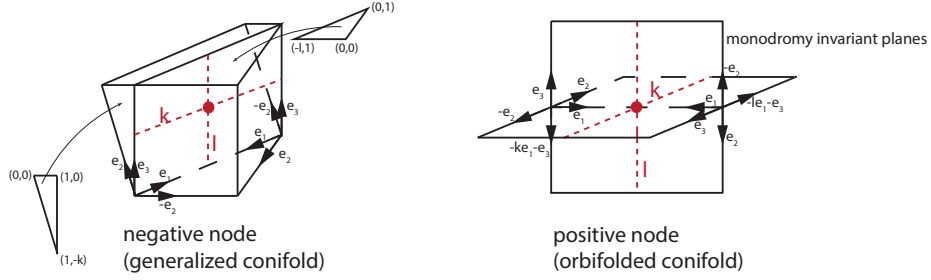}
\caption{Affine structures of the local generalized conifold and orbifolded conifold.}
\label{fig:loc-orb-cfd}
\end{center}
\end{figure}

The discriminant locus $\Delta$ is a union of the lines 
$$\left([0,l] \times \{0\} \times \{k/2\}\right) \cup \left(\{l/2\} \times \{0\} \times [0,k] \right)$$ 
in the rectangle $\Conv\{(0,0,0),(l,0,0),(0,0,k),(l,0,k)\}$.  This gives an affine manifold with singularities and polyhedral decomposition $(B,\cP)$.  The monodromy is given as follows.

\begin{prop} \label{prop:mono-match}
For the tropical manifold $(B,\cP)$ given above, the monodromy matrix around the component $[0,l] \times \{0\} \times \{k/2\}$ of the discriminant locus in the standard basis $\{e_1,e_2,e_3\}$ at $(0,0)$ equals to
$$\left(\begin{array}{ccc}
1 & 0 & 0\\
0 & 1 & 0\\
0 & k & 1\\
\end{array}\right)$$
and the monodromy matrix around the component $\{l/2\} \times \{0\} \times [0,k]$ equals to
$$\left(\begin{array}{ccc}
1 & -l & 0\\
0 & 1 & 0\\
0 & 0 & 1\\
\end{array}\right).$$
It matches with the monodromy in Proposition \ref{prop:mon-orbcfd}.
\end{prop}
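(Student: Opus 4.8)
The plan is to compute the monodromy directly from the fan structures at the four vertices of the prisms, in exactly the same style as the two-dimensional computation in the Proposition preceding Figure~\ref{fig:affA_n} (the one giving the matrix $\left(\begin{smallmatrix}1&0\\k&1\end{smallmatrix}\right)$). The key observation is that the local affine geometry here is, near each component of $\Delta$, a product of the two-dimensional $A$-type picture with an affine line, so the two monodromies can be read off from the planar computations by inserting a trivial row and column in the appropriate slot.

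First I would fix the loop around the component $[0,l]\times\{0\}\times\{k/2\}$: start at the vertex $(0,0,0)$, travel to the vertex $(0,0,k)$ through the prism containing the face $\Conv\{(0,0,0),(l,0,0),(0,0,k),(l,0,k),(0,1,0),(0,1,k)\}$ (the ``$y\geq 0$'' prism), then return to $(0,0,0)$ through the other prism (the ``$y\leq 0$'' prism). This loop lies in the slice where the first coordinate is constant, so it is the planar $A_{k-1}$ loop crossed with the $e_1$-direction; the vector $e_1$ and the vector $e_3$ playing the role of the ``$e_2$'' in the planar picture should be monodromy invariant, while $e_2$ (playing the role of the planar ``$e_1$'') picks up $k$ times $e_3$. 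Concretely, transporting $e_2$ to $(0,0,k)$ through the $y\geq 0$ prism it is identified with $-e_2$ in the fan there (using that the tangent vector $(0,1,0)$ at $(0,0,0)$ and the corresponding tangent vector at $(0,0,k)$ both map to $e_2$, while $(0,1,k)$ maps to $e_2$ as well at the appropriate vertex); transporting $-e_2$ back through the $y\leq 0$ prism it becomes $e_2+ke_3$, since in that prism the tangent vector $(0,-1,0)$ maps to $-e_2$ and the relation $(0,1,k)=-(0,-1,0)+k(0,0,1)$ forces the shift by $ke_3$. This yields exactly the first matrix.

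For the component $\{l/2\}\times\{0\}\times[0,k]$, I would run the analogous loop in the slice where the third coordinate is constant: this is again a planar $A_{l-1}$-type picture crossed with the $e_3$-line. Here $e_1$ (the ``planar $e_1$'') is the one that changes, picking up $-l$ times $e_2$ after going around, while $e_2$ and $e_3$ are invariant; the sign and the coefficient $-l$ come from comparing the relation $(1,0,0)=-(-1,0,\cdot)+\cdots$ across the two prisms, just as in the $\left(\begin{smallmatrix}1&-k\\0&1\end{smallmatrix}\right)$ planar computation. This gives the second matrix. Finally, to verify the match with Proposition~\ref{prop:mon-orbcfd}, I would write down the explicit change of basis identifying $\{e_1,e_2,e_3\}$ here with the fiber-homology basis $\{[\gamma_1],[\gamma_2],[\gamma_3]\}$ used there — the natural guess is $e_1\leftrightarrow[\gamma_1]$ up to the obvious permutation/sign dictated by which toric coordinate each $\gamma_i$ winds around — and check that conjugating the two matrices above by this identification reproduces $[\gamma_1]\mapsto[\gamma_1]-l[\gamma_3]$ and $[\gamma_1]\mapsto[\gamma_1]+k[\gamma_2]$.

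The main obstacle I anticipate is purely bookkeeping: getting the orientation of the loops and the identification of the fiber basis $\{[\gamma_i]\}$ with $\{e_i\}$ consistent, so that the signs ($-l$ versus $+l$, and the placement of $k$ below versus above the diagonal) come out exactly as stated rather than as the transpose or inverse. Since both the affine side (Propositions in Section~\ref{sec:loc-aff}) and the symplectic side (Proposition~\ref{prop:mon-orbcfd}) are computed here, the safest route is to track a single explicitly parametrized loop on the Lagrangian fibration of $O_{k,l}$, read its image in the affine base under the fibration map \eqref{eq:fib_O}, and confirm it is homotopic to the polyhedral loop I used above; this pins down the dictionary with no sign ambiguity. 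I do not expect any conceptual difficulty beyond this.
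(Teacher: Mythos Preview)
Your approach---transporting basis vectors along a loop through the two prisms and reducing to the planar $A$-type computations---is exactly the paper's method, and your treatment of the first component ($e_2\mapsto e_2+ke_3$ with $e_1,e_3$ fixed) is correct.

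There is, however, a concrete error in your sketch for the second component $\{l/2\}\times\{0\}\times[0,k]$. You assert that $e_1$ is the vector that changes, picking up $-l\,e_2$; in fact it is again $e_2$ that changes, now by $-l\,e_1$, while $e_1$ and $e_3$ remain fixed. This is not merely a transpose/sign ambiguity of the sort you flag at the end: the geometry forces it. The two prisms are glued along the rectangle spanned by the $e_1$ and $e_3$ directions, and $e_2$ is the direction transverse to that rectangle; the monodromy around \emph{either} component of $\Delta$ comes from the mismatch of fan charts across this transverse direction, so in both cases it is $e_2$ that acquires a shift lying in the $(e_1,e_3)$-plane. (Even in the planar $\left(\begin{smallmatrix}1&-k\\0&1\end{smallmatrix}\right)$ computation you invoke, it is the planar $e_2$ that changes, not $e_1$.) The paper makes precisely this explicit: transporting $e_2$ from $(0,0,0)$ to $(l,0,0)$ through one prism and back through the other yields $e_2\mapsto e_2-l\,e_1$, using the relation $(0,1,0)=l(-1,0,0)-(-l,-1,0)$ in the fan at $(l,0,0)$. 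Once you correct this, the rest of your plan---including the dictionary with Proposition~\ref{prop:mon-orbcfd}---goes through and coincides with the paper's argument.
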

\begin{proof}
$e_1$ and $e_3$ are obviously monodromy invariant.  Let's consider the monodromy of $e_2$ around $\{l/2\} \times \{0\} \times [0,k]$, and the other case is similar.  First we transport $e_2$ from the vertex $(0,0,0)$ to $(l,0,0)$ in the prism on the right, which is identified with $l e_1-e_2$ in the fan at $(l,0,0)$ (since $(0,1,0)=l(-1,0,0)-(-l,-1,0)$).  It is identified with the vector $(-l,0,0)+(0,1,k)$ in the prism on the left.  Transporting it back to $(0,0,0)$, it is identified with $-l e_1 + e_2$ in the fan at $(0,0,0)$.  Thus the monodromy maps $e_2$ to $-l e_1 + e_2$.
\end{proof}

The multivalued piecewise linear function $\phi$ is defined by
\begin{equation} \label{eq:phi-gc}
\phi(x,y,z) = \left\{ \begin{array}{ll}
y & y \geq 0 \\
0 & y \leq 0 \\
\end{array}\right.
\end{equation}
on the fan generated by $e_1,e_2,e_3,-e_2$.  $(B,\cP,\phi)$ defines a tropical manifold.

The discrete Legendre transform of $(B,\cP,\phi)$ is a union of four rectangles as shown in the right of Figure \ref{fig:loc-orb-cfd}.  The readers can work out the multivalued piecewise linear function $\check{\phi}$ from the figure, and check that the monodromy matches with that in Proposition \ref{prop:mon-gencfd}.

\begin{remark}
Proposition 6.4 of \cite{CM2} extends to this case by the same proof using action coordinates.  Namely, the affine structure induced on the base of the Lagrangian fibration \eqref{eq:fib_O} on $O_{k,l}$ is isomorphic to the affine manifold $(\check{B},\check{\cP})$.

The monodromy of $(B,\cP)$ matches with that of the Lagrangian fibration on the orbifolded conifold, while the monodromy of $(\check{B},\check{\cP})$ matches with that of the Lagrangian fibration on the generalized conifold.  It may look confusing that the roles of $(B,P)$ and $(\check{B},\check{P})$ get switched.  This is due to the fact that $T^*B_0 \cong T\check{B}_0$ and $T^*\check{B}_0 \cong TB_0$.  (Away from singular fibers a Lagrangian fibration is modeled by $T^*B_0 / \Lambda^*$.)  $TB_0/\Lambda$ and $T^*B_0 / \Lambda^*$ are related by discrete Legendre transform.  
\end{remark}

\subsection{Smoothings and resolutions of generalized and orbifolded conifolds}

The affine generalized conifolds are not simple in the sense of \cite[Definition 1.60]{GS1}.  They can be smoothed or resolved as shown by the left and right sides of Figure \ref{fig:affA_n} (for the case $k=l=2$).  Smoothing and resolution of the orbifolded conifold are given by the Legendre transform.  There are different choices of smoothing corresponding to different ways of refining the rectangle $[0,k]\times [0,l]$ into standard triangles; the discriminant locus is given by taking the dual graph of the triangulation.  (There are further choices of extending the triangulation to a refinement of the polyhedral decomposition, but it does not affect the affine structure.)  Similarly there are different choices of resolution corresponding to the different orders of horizontal and vertical line components of the discriminant locus.  In the description below we have fixed a particular choice.

\begin{figure}[h]
\begin{center}
\includegraphics[scale=0.7]{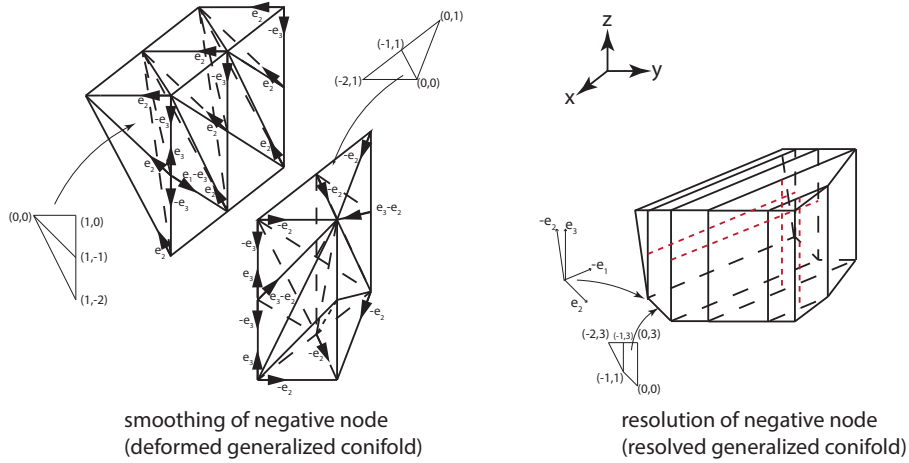}
\caption{Affine base of the smoothing and resolution of the generalized conifold.  In the figure $k=l=2$.}
\label{fig:loc-orb-con-res}
\end{center}
\end{figure}

For the smoothed generalized conifold, the polyhedral decomposition is shown in the left figure.  The non-trivial affine structure comes from the assignment of a fan structure at each lattice point in the rectangle in the direction $(1,0,0),(0,0,1)$.  Let's take the coordinate system such that these lattice points are given by $(j,0,i)$ for $i=0,\ldots,k$ and $j=0,\ldots,l$.  The key point is that, at the lattice point $(j,0,i)$, the vector $e_2$ in the local chart induced by the fan is identified with $(0,-1,k-i)$ in the polyhedral decomposition, and the vector $-e_2$ is identified with $(l-j,1,0)$.  Other directions are trivially identified.  

The triangulation of the rectangle $[0,k]\times [0,l]$ is associated with a dual graph.  To fix the positions of the vertices in the dual graph, we fix an integral piecewise linear function supported on the triangulation.  Then we define $\phi$ to be the sum of this function and that given by Equation \eqref{eq:phi-gc}.  This completes the definition of the tropical manifold $(B,P,\phi)$ corresponding to a deformed generalized conifold.  Its Legendre dual corresponds to resolution of a resolved orbifolded conifold.

The resolved affine generalized conifold is shown in the right figure.  The discriminant locus consists of vertical lines contained in the planes $y=1,\ldots,l$ and horizontal lines contained in the planes $y=0,\ldots,-k+1$.  Let's fix the coordinates such that the polyhedral decomposition contains the vertices $((j-1)j/2,j,0)$, $((j-1)j/2,j,k(k+1)/2)$ for $j=1,\ldots,l$, and $(0,-i+1,(i-1)i/2)$ or $(l(l+1)/2,-i+1,(i-1)i/2)$ for $i=1,\ldots,k$.  At the vertex $((j-1)j/2,j,0)$ or $((j-1)j/2,j,k(k+1)/2)$, $e_2$ is identified with $(j,1,0)$ and $-e_2$ is identified with $(-j+1,-1,0)$.  At the vertex $(0,-i+1,(i-1)i/2)$ or $(l(l+1)/2,-i+1,(i-1)i/2)$, $e_2$ is identified with $(0,1,-i+1)$ and $-e_2$ is identified with $(0,-1,i)$.

The multivalued piecewise linear function $\phi$ is defined by asserting that in the fan of each vertex, $\phi(e_2)=1$ and $\phi(-e_2)=\phi(\pm e_3)=\phi(\pm e_1)=0$.  This defines the tropical manifold $(B,P,\phi)$ corresponding to a resolved generalized conifold, whose Legendre dual corresponds to a deformed orbifolded conifold.

One can directly verify the following.

\begin{prop}
The tropical manifolds given above are positive and simple in the sense of \cite{GS1}.
\end{prop}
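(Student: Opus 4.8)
The plan is to verify the two conditions in Gross--Siebert's Definition 1.60 of \cite{GS1} --- positivity and simplicity --- directly from the explicit combinatorial data (polyhedral decomposition, fan structures, and multivalued piecewise linear function $\phi$) recorded in the preceding two subsections for the smoothed and resolved generalized conifolds. Since the orbifolded conifolds and the resolved/deformed orbifolded conifolds are obtained by discrete Legendre transform, and the Legendre transform exchanges positivity/simplicity with itself (it is built into the Gross--Siebert formalism that the transform of a positive simple tropical manifold is again positive and simple), it suffices to check the assertion for the two generalized-conifold models; I would state this reduction first.

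First I would treat the \emph{smoothed} generalized conifold. The discriminant locus $\Delta$ is now a trivalent graph --- the dual graph of a triangulation of the rectangle $[0,k]\times[0,l]$ crossed appropriately --- with vertices of only two combinatorial types: a vertex where an edge meets two edges (corresponding to a triangle of the triangulation) and a vertex where two line segments cross transversally coming from the extra subdivision. Actually, after refining to standard triangles the relevant vertices are the trivalent ones dual to the triangles; I would check that around each edge of $\Delta$ the monodromy, in a suitable oriented basis, is the elementary matrix $\begin{pmatrix}1&1&0\\0&1&0\\0&0&1\end{pmatrix}$ up to conjugation --- this follows from the local computation that at a lattice point $(j,0,i)$ the vector $e_2$ is identified with $(0,-1,k-i)$ and $-e_2$ with $(l-j,1,0)$, so each elementary transition picks up a single unit. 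Then at each trivalent vertex I would verify that the three incoming edge-directions together with their monodromy data satisfy the positivity condition (the vertex is a positive or negative node in the sense of the local models of \cite{CM1}) and the simplicity condition of \cite[Def.\ 1.60]{GS1}, which amounts to checking that the monodromy polytopes $\Delta_e$ and $\check\Delta_e$ associated to the three edges are elementary simplices --- here line segments --- and that the fan structure at the vertex is the standard one for a conifold point. For the resolved generalized conifold I would do the dual computation: the discriminant locus is a disjoint union of straight lines lying in the planes $y=1,\dots,l$ and $y=0,\dots,-k+1$, and at the listed vertices $e_2$ is identified with $(j,1,0)$ resp.\ $(0,1,-i+1)$, again yielding elementary monodromy around each line; positivity is immediate from $\phi(e_2)=1$, $\phi(-e_2)=0$, which makes the kink of $\phi$ across each wall equal to $1$ (the positivity sign), and simplicity holds because each line component is a product edge of multiplicity one.

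The main obstacle I anticipate is bookkeeping rather than conceptual: one must confirm that the various \emph{orbifold} fan structures that appeared at the auxiliary vertices (as in Remark \ref{rem:subdivide}, the vertex $(-1,k)$-type points) have genuinely been removed by the subdivisions, so that every vertex of the refined decomposition carries either a smooth toric fan or the standard conifold fan; otherwise simplicity in the strict sense of \cite{GS1} would fail. I would handle this by noting, as in Remark \ref{rem:subdivide}, that the problematic orbifold cones only occur at vertices lying off the discriminant locus, that they are resolved by the stated refinement with trivial fan structures at the new vertices, and that such refinements do not change the affine structure on $B_0$ and hence do not affect the monodromy, positivity, or simplicity data. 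Once this is in place, the verification of positivity (the kink of $\phi$ across each wall has the correct sign, computed from the two displayed local formulas for $\phi$) and of simplicity (each edge of $\Delta$ has monodromy conjugate to a single elementary transvection, equivalently its monodromy polytope is a primitive segment) is a finite, direct check over the two explicitly described models, completing the proof.
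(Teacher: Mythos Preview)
The paper does not actually give a proof of this proposition: it simply prefaces the statement with ``One can directly verify the following'' and moves on. Your proposal is therefore not being compared against an argument but against an omission; what you have written is precisely the kind of direct combinatorial check the paper is leaving to the reader, and your overall strategy --- reduce via Legendre duality to the two generalized-conifold models, then verify edge monodromies are elementary transvections and vertex data are standard positive/negative trivalent --- is the natural (and essentially only) way to carry out that verification.

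A few small corrections to your plan. Positivity and simplicity are separate definitions in \cite{GS1} (Definitions~1.54 and~1.60 respectively), not both contained in 1.60. For the \emph{resolved} generalized conifold the discriminant lines lie in pairwise distinct affine planes $y=\mathrm{const}$, so they are genuinely disjoint and there are no vertices of $\Delta$ at all; simplicity then reduces entirely to the edge condition, and your remark about ``a vertex where two line segments cross transversally'' does not apply here. For the \emph{smoothed} generalized conifold the discriminant is the dual graph of a triangulation into standard triangles, hence purely trivalent, and again no 4-valent crossings occur. With those adjustments your bookkeeping plan is sound.
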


\subsection{Orbifolded trivalent vertex} \label{sec:orb_vert}

In this subsection we introduce an orbifolded version of the trivalent vertex in the Gross-Siebert program.  This will be used in Definition \ref{def:orbi-cfd}.  See \cite[Example 2.2 and 2.3]{CM2} for the usual trivalent vertex.

Take a lattice triangle $T \subset \R^2$ with vertices $v_0, v_1, v_2 \in \Z^2$ (labeled clockwise, and the indices are taken in $\Z/3\Z$).  We may simply take $v_0=0$.  It gives orbifolded positive and negative vertices as follows.  The negative vertex consists of the prism $T \times [0,1] \subset \R^3$ and the simplex 
$$\mathrm{Conv}\{(v_0,0),(v_1,0),(v_2,0),(v_0,-1)\} \subset \R^3.$$  
They are glued along the face $T \times \{0\}$ as shown in Figure \ref{fig:orb_vert}.  The fan structure at $(v_0,0)$ is generated by $(u_1,0),(-u_3,0),e_3,-e_3$ (which can be orbifolded) where $u_i$ are the primitive vectors in the directions of $v_i-v_{i-1}$, and they are mapped to the tangent vectors of the polytopes at $(v_0,0)$ trivially.  The fan structure at $(v_1,0)$ is generated by $(u_2,0),(-u_1,0),e_3,-e_3$ where $(u_2,0),(-u_1,0)$ are mapped to the tangent vectors of the polytopes trivially, while $e_3$ is mapped to $(0,0,1)$ and $-e_3$ is mapped to $(v_0-v_1,-1)$.  Similarly the fan structure at $(v_2,0)$ is generated by $(u_2,0),(-u_1,0),e_3,-e_3$ where $(u_2,0),(-u_1,0)$ are mapped to the tangent vectors of the polytopes trivially, $e_3$ is mapped to $(0,0,1)$ and $-e_3$ is mapped to $(v_0-v_2,-1)$.

\begin{figure}[h]
\begin{center}
\includegraphics[scale=0.6]{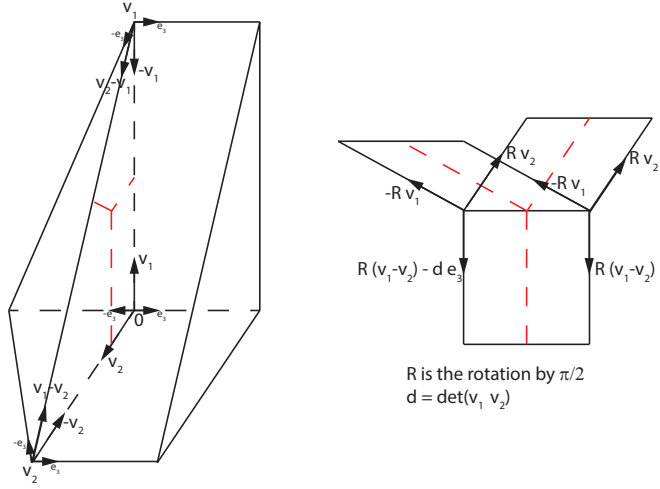}
\caption{The negative (the left) and positive (the right) orbifolded trivalent vertices.}
\label{fig:orb_vert}
\end{center}
\end{figure}

The discriminant locus is a Y-shape which is the dual graph of $T$ embedded in $T\times\{0\}$.  It is easy to verify the following.

\begin{prop}
$\R^2\times\{0\}$ is monodromy invariant.  The monodromy around the leg dual to the edge $\{v_{i-1},v_i\}$ of $T$ sends $e_3$ to $e_3 + (v_i-v_{i-1})$.  ($v_3:=v_0$.)  In particular the multiplicity of the legs is equal to the affine lengths of $v_i-v_{i-1}$.
\end{prop}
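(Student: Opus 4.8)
The plan is to compute the monodromy directly from the gluing data, following the same bookkeeping used in the proof of Proposition \ref{prop:mono-match}. First I would observe that the claim ``$\R^2\times\{0\}$ is monodromy invariant'' is immediate: every face along which the prism $T\times[0,1]$ and the simplex $\mathrm{Conv}\{(v_0,0),(v_1,0),(v_2,0),(v_0,-1)\}$ are glued lies in the plane $\R^2\times\{0\}$, and at each of the three vertices $(v_i,0)$ the fan structure sends the ``horizontal'' generators $(u_i,0)$ to the tangent vectors of the polytopes trivially. Hence a loop around any leg of the Y-shaped discriminant locus fixes the horizontal tangent plane pointwise, so the only possibly nontrivial part of the monodromy is the action on $e_3$, i.e.\ the vertical direction. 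This reduces the statement to tracking $e_3$ around each leg.

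Next I would fix a leg of the Y, say the one dual to the edge $\{v_{i-1},v_i\}$ of $T$, and choose a loop that crosses from the prism chamber to the simplex chamber and back, passing near the vertex $v_{i-1}$ on one side and the vertex $v_i$ on the other (this is the standard choice, as in \cite[Example 2.2]{CM2} and in Proposition \ref{prop:mono-match} above). On the prism side, $e_3$ is identified with the vertical vector $(0,0,1)$. Transporting into the simplex across $T\times\{0\}$ and then back to a vertex of the prism on the other side of the leg, I would use the prescribed identifications of $-e_3$ with $(v_0-v_i,-1)$ at the vertex $(v_i,0)$ (and with $(v_0-v_{i-1},-1)$ at $(v_{i-1},0)$). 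Concretely: at the vertex $(v_i,0)$ the fan sends $e_3\mapsto(0,0,1)$ and $-e_3\mapsto(v_0-v_i,-1)$, so the vertical generator pointing into the simplex is $(v_0-v_i,-1)$ there; carrying this through the simplex to the vertex $(v_{i-1},0)$ and re-expressing in that fan, where $-e_3$ corresponds to $(v_0-v_{i-1},-1)$, the discrepancy is exactly the horizontal vector $(v_i-v_{i-1},0)$. Therefore $e_3$ returns as $e_3+(v_i-v_{i-1})$, which is the asserted formula. The cyclic convention $v_3:=v_0$ handles the third leg uniformly.

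Finally, the statement about multiplicity follows by definition: the monodromy around the leg, written in a basis adapted to the $2$-dimensional transverse slice (the vertical direction $e_3$ together with the lattice direction spanned by $v_i-v_{i-1}$), is the elementary matrix $\left(\begin{smallmatrix}1&m\\0&1\end{smallmatrix}\right)$ where $m$ is the lattice length of $v_i-v_{i-1}$, since $v_i-v_{i-1}=m\,u_i$ with $u_i$ primitive. Comparing with the Definition of multiplicity of a discriminant edge (Definition of affine $A_{k-1}$ singularity together with Remark \ref{rem:edge}), the multiplicity of this leg equals the affine length of $v_i-v_{i-1}$.

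I expect the only real subtlety — and hence the ``hard part'' — to be the sign/orientation bookkeeping in the transport step: making sure the loop is traversed in the orientation consistent with the clockwise labeling of $v_0,v_1,v_2$, so that the correction picked up is $+(v_i-v_{i-1})$ rather than its negative, and checking that the choice of which prism vertex one returns to does not change the answer (it does not, since the two identifications of $-e_3$ at $(v_{i-1},0)$ and $(v_i,0)$ differ precisely by the edge vector). Everything else is a routine unwinding of the fan identifications already spelled out before the proposition, entirely parallel to the computation in Proposition \ref{prop:mono-match}.
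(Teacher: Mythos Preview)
Your proposal is correct. The paper does not actually give a proof of this proposition; it just writes ``It is easy to verify the following'' and states the result. Your direct transport computation, modeled on the proof of Proposition~\ref{prop:mono-match}, is precisely the intended verification: horizontal vectors are trivially identified at every vertex so $\R^2\times\{0\}$ is fixed, and the discrepancy between the identifications $-e_3\leftrightarrow(v_0-v_i,-1)$ at $(v_i,0)$ and $-e_3\leftrightarrow(v_0-v_{i-1},-1)$ at $(v_{i-1},0)$ is exactly $(v_i-v_{i-1},0)$, giving $e_3\mapsto e_3+(v_i-v_{i-1})$.
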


The piecewise linear function $\phi$ is given by Equation \eqref{eq:phi-gc}.  This finishes the definition of an orbifolded negative vertex as a tropical manifold.

The orbifolded positive vertex associated to $T$ is the following.  Let 
$$R = \left( 
\begin{array}{ll} 0 & -1\\
1 & 0 
\end{array}
\right).$$
$-R\cdot (v_i-v_{i-1})$ are outward normal vectors of the triangle $T$.  The polyhedral decomposition consists of the three polyhedral sets $\left(\R_{\geq 0} \cdot \{-R\cdot (v_i-v_{i-1}),-R\cdot (v_{i+1}-v_i)\}\right) \times [0,1]$ for $i=1,2,3$, and they are glued as shown on the right of Figure \ref{fig:orb_vert}.  The fan at $(0,1)$ is generated by the vectors $-R \cdot u_1,-R \cdot u_2, -R \cdot u_3, -e_3$.  The fan at $(0,0)$ is generated by the directions $-R \cdot u_1,-R \cdot u_3, -R \cdot (v_2-v_1) - d e_3, e_3$ for $d = \det (v_1, v_2)$ (two times the area of $T$), where $-R \cdot u_1,-R \cdot u_3, e_3$ are mapped trivially to the tangent vectors of the polytopes and $-R \cdot (v_2-v_1) - d e_3$ is mapped to $-R \cdot (v_2-v_1)$. The discriminant locus is the same Y-shape which is union of the intersection of the facets of the polytopes with $\R^2 \times \{0\}$.

The multivalued piecewise linear function restricts to the fan at each of the two vertices as $v_0,v_1,v_2$ (regarded as linear function on the dual vector space) on the three maximal cones of the fan respectively.  This gives an orbifolded positive vertex as a tropical manifold.

\begin{prop}
Take the leg in the direction $-R \cdot u_i$.  The plane $\R \cdot \{(-R \cdot (v_i-v_{i-1}),0), (0,1)\}$ is monodromy invariant.  Let $w \in \Z^2\times\{0\}$ such that $\det (u_i,w) =1$.  Then the monodromy sends $-R \cdot w$ to $-R \cdot w - h e_3$ where $h$ is the affine length of $v_i-v_{i-1}$.  In particular the multiplicity is equal to the affine length of $v_i-v_{i-1}$.

The orbifolded positive and negative vertices defined above form a Legendre dual pair.
\end{prop}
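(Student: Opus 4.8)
The plan is to establish both claims by the same kind of direct combinatorial verification used for the ordinary trivalent vertices in \cite{CM2} and for the affine $A_{k-1}$ singularity above; the only genuinely new ingredients are the orbifold (non-unimodular) cones and the tilt term $-d\,e_3$ appearing in the fan at $(0,0)$.

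For the Legendre duality I would follow the template of the proof that $(B,\cP,\phi)$ and $(\check{B},\check{\cP},\check{\phi})$ are discrete Legendre dual above: at each vertex one checks (i) that the fan structure of one model is the normal fan of the corresponding polytope of the other model, and (ii) that the restriction of the multivalued piecewise linear function to that fan is the support function of that polytope — equivalently, that its differential on each maximal cone recovers the vertices of the dual polytope. Concretely, the negative vertex carries the two maximal cells $T\times[0,1]$ and $\mathrm{Conv}\{(v_0,0),(v_1,0),(v_2,0),(v_0,-1)\}$ with $\phi$ as in \eqref{eq:phi-gc}; computing the Legendre transform of $\phi$ at the two vertices $(0,0)$ and $(0,1)$ of the positive model must reproduce exactly the three cells $(\R_{\geq 0}\{-R\cdot(v_i-v_{i-1}),-R\cdot(v_{i+1}-v_i)\})\times[0,1]$, along with the prescribed $\check{\phi}$ (restricting to $v_0,v_1,v_2$ on the three maximal cones). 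Conversely, the normal fans of the prism $T\times[0,1]$ and of the simplex must recover the fans at $(0,1)$ and at $(0,0)$ respectively; the one point requiring care is that the vertex $(v_0,-1)$ of the simplex together with the height data of $\phi$ is exactly what forces the generator $-R\cdot(v_2-v_1)-d\,e_3$ with $d=\det(v_1,v_2)$. Since the discrete Legendre transform is an involution, carrying out one direction carefully and invoking symmetry suffices.

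For the monodromy statement I would run the loop argument used in the proof of Proposition \ref{prop:mono-match}: fix a small loop around the leg $\R_{\geq 0}(-R\cdot u_i)$, transport a vector from the vertex $(0,0)$ to the vertex $(0,1)$ through one of the two maximal cells adjacent to this leg and back through the other, and read off the resulting identification in the fan charts at the two vertices. The vectors $(-R\cdot(v_i-v_{i-1}),0)$ and $e_3$ spanning the claimed invariant plane are identified trivially all along this loop, hence are monodromy invariant; transporting $-R\cdot w$ (with $\det(u_i,w)=1$) through the single nontrivial identification — the one across the face carrying the tilted generator at $(0,0)$ — yields the shear $-R\cdot w\mapsto -R\cdot w-h\,e_3$, where $h$ is the affine length of $v_i-v_{i-1}$, after which the multiplicity assertion is immediate. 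As a consistency check (and an alternative derivation) one may instead invoke the monodromy of the negative vertex computed above together with the fact that the discrete Legendre transform intertwines the affine monodromy with its contragredient on the dual lattice (reflecting the identification $T^*B_0\cong T\check{B}_0$ noted earlier): the inverse transpose of $e_3\mapsto e_3+(v_i-v_{i-1})$, rewritten in terms of the generators of the positive vertex's fan, is precisely $-R\cdot w\mapsto -R\cdot w-h\,e_3$ with invariant plane $\R\{-R\cdot u_i,\,e_3\}$.

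The main obstacle is bookkeeping rather than anything conceptual: pinning down the tilt term $-d\,e_3$ with the correct sign and in the correct maximal cone, and keeping the orbifold cones consistent with the integral affine lattice when taking Legendre transforms and computing monodromies — exactly the care already needed for the ordinary positive vertex in \cite{CM2}. Once the normalization $d=\det(v_1,v_2)$ is fixed, both parts reduce to a routine check, in the spirit of the other easy verifications in this section.
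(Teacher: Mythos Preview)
Your proposal is correct and follows essentially the same route as the paper: the Legendre duality is dismissed there as ``easy to check'' (exactly the polytope/normal-fan verification you outline), and the monodromy is read off directly from the two fan charts, with the tilt term $-d\,e_3$ at $(0,0)$ as the source of the shear. The one concrete step the paper makes explicit that you leave as ``bookkeeping'' is how $h$ rather than $d$ appears: the paper first notes that the monodromy sends $-R\cdot(v_{i+1}-v_i)$ to $-R\cdot(v_{i+1}-v_i)-d\,e_3$ (immediate from the tilted generator), then writes $v_{i+1}-v_i=-a\,u_i+b\,w$ and uses $d=\det(v_{i+1}-v_i,-(v_i-v_{i-1}))=bh$ to extract the action on $-R\cdot w$ by linearity --- your contragredient argument is a clean alternative to this linear-algebra step.
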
 

\begin{proof}
It is obvious that the plane $\R \cdot \{(-R \cdot (v_i-v_{i-1}),0), (0,1)\}$ is monodromy invariant, and the monodromy sends $(-R \cdot (v_{i+1}-v_i),0)$ to $(-R \cdot (v_{i+1}-v_i),-d)$.  Write $v_{i+1}-v_i = -a u_i + b w$ for some integers $a,b$.  Also $v_i-v_{i-1}= h u_i$.  We have $d = \det (v_{i+1}-v_i,-(v_i-v_{i-1})) = bh$.  Then the monodromy sends $-R \cdot w = -R\cdot (v_{i+1}-v_i)/b - a R \cdot u_i / b$ to $-R\cdot (v_{i+1}-v_i)/b - d e_3/b - a R \cdot u_i / b = -R \cdot w - he_3$.  The remaining statements are easy to check.
\end{proof}

The orbifolded positive vertex corresponds to the toric CY orbifold $\C^3/G$ for a finite group $G$, whose fan is the cone over $T \times \{1\} \subset \R^3$.  Its Lagrangian fibration is again given by Equation \eqref{eq:fib_An}, where in this case $\mu_1$ is the first two components of the moment map (with respect to a fixed toric K\"ahler form).  The orbifolded negative vertex corresponds to the mirror variety $\{(u,v,z_1,z_2) \in \C^2 \times (\C^\times)^2: uv=\sum_{i=1}^3 z_1^{a^{(i)}_1}z_2^{a^{(i)}_2}\}$ where $v_i=(a^{(i)}_1,a^{(i)}_2)$ are the vertices of the triangle $T$.  One can cook up a piecewise smooth Lagrangian fibration by using the construction of \cite{AAK} by Moser argument.

\subsection{Local Gorenstein singularity} \label{sec:Gor}

A natural further generalization is the tropical manifolds corresponding to toric Gorenstein singularity and its mirror.  Orbifolded trivalent vertices and conifolds are included as special cases.  SYZ mirror symmetry for geometric transitions associated to toric Gorenstein singularities was studied in \cite{L13}.

The construction is very similar to the last subsection, and so we will not go into detail.  The lattice triangle $T$ in the last subsection is replaced by a lattice polygon $P \subset \R^2$ with vertices $v_0,\ldots,v_{m-1}$.  Then the discriminant locus consists of legs in directions $-R\cdot (v_i-v_{i-1})$ stemming from a single vertex.  The integer $d$ is defined to be two times the area of $P$ here.  We obtain the Gorenstein positive and negative vertices.  See the example in Figure \ref{fig:Gor-sing}.

\begin{figure}[h]
\begin{center}
\includegraphics[scale=0.6]{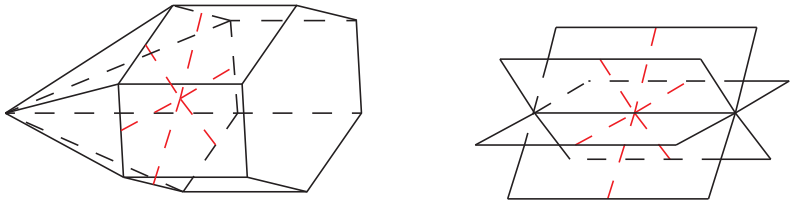}
\caption{An example of affine structure for toric Gorenstein singularity.  The right hand side (positive vertex) is the toric Gorenstein singularity and the left hand side is its mirror.}
\label{fig:Gor-sing}
\end{center}
\end{figure}

An interesting class of geometries is hyperconifold studied by physicists \cite{Davies1,Davies2}.  It gives an instance that a conifold transition (at several nodes simultaneously) is not mirror to a reverse conifold transition (but to a hyperconifold transition instead).  In this case $P$ is taken to be a parallelogram spanned by two vectors $v,w$.  It includes orbifolded conifolds as special cases.  

\begin{remark}
A Minkowski decomposition of the polytope $P$ gives a smoothing of the corresponding toric Gorenstein singularity \cite{altmann} (or a resolution of its mirror).  However it may not exist in general.  (In contrast a triangulation of $P$ into standard triangles which gives a resolution of the toric Gorenstein singularity always exists.)
\end{remark}

\section{Rational elliptic surfaces and the `12' Property} \label{sec:surf}

In this section we study tropical surfaces corresponding to rational elliptic surfaces with $A_n$ singularities.  A large part of this section is well-known to experts.  It is a reformulation of the works of \cite{Symington,LS} using the terminologies of \cite{GS07,CM1}.

First we introduce an easy generalization of the well-known `12' Property for non-convex polygons.  It is a special case of the `legal loops' in \cite{PR}.  Then we construct the corresponding tropical surfaces and obtain mirror pairs of symplectic rational elliptic surfaces with singularities from dual reflexive polygons.  We also generalize the construction for legal loops and relate with elliptic surfaces.  By using the classical result of Matsumoto \cite{Matsumoto}, it gives a topological proof of the generalized `12' property.

The significant work of Gross-Hacking-Keel \cite{GHK} developed mirror symmetry for log Calabi-Yau surfaces whose anti-canonical divisor is a nodal cycle of holomorphic spheres.  For rational elliptic surfaces the corresponding anti-canonical divisor is a smooth elliptic curve.


\subsection{The `12' Property}

The following is a special case of the `12' Property for legal loops in \cite[Section 9.1]{PR} when the winding number is $1$ and there is no clockwise move in the legal loop.  (See Theoreom \ref{thm:12-legal}.)

\begin{prop} \label{prop:12}
Let $v_1, \ldots, v_m \in \Z^2-\{0\}$ be distinct primitive vectors, labeled in the counterclockwise manner, such that $\R_{\geq 0}\cdot\{v_1,\ldots,v_m\}=\R^2$, and the simplex $\{0,v_i, v_{i+1}\}$ does not contain any interior lattice point for every $i \in \Z/m\Z$.  Let $P$ be the union of all these simplices. 
Then
$$ 2 \, \mathrm{Area}(P) + \sum_{i\in\Z/m\Z} \det (u_{i-1},u_{i}) = 12 $$
where $u_i$ is the primitive vector in the direction of $v_{i+1}-v_i$.  
\end{prop}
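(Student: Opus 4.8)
The plan is to reduce the statement to the classical ``12'' theorem for a reflexive (or more generally, a lattice) polygon whose vertices are exactly the $v_i$, by showing that the sum on the left-hand side is a lattice-point count that is invariant under the relevant moves. First I would set $Q = \Conv\{v_1,\ldots,v_m\}$, the convex hull of the primitive vectors. Since the $v_i$ are the vertices of the rays spanning $\R^2$ and each cone $\R_{\geq 0}\{v_i,v_{i+1}\}$ contains no interior lattice point other than multiples on its boundary edges, the polygon $Q$ has $0$ in its interior and has exactly $v_1,\ldots,v_m$ as lattice points on its boundary in cyclic order — i.e. $Q$ is a lattice polygon with no boundary lattice point strictly between consecutive $v_i$. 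The union $P$ of the simplices $\{0,v_i,v_{i+1}\}$ is then precisely $Q$, so $2\,\mathrm{Area}(P) = 2\,\mathrm{Area}(Q)$, which by Pick's theorem equals $2(\#(\mathrm{int}\,Q\cap\Z^2)) + (\#\partial Q\cap\Z^2) - 2$.

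Next I would interpret the second sum. For consecutive edge-direction primitive vectors $u_{i-1}$ (direction of $v_i - v_{i-1}$) and $u_i$ (direction of $v_{i+1}-v_i$), the quantity $\det(u_{i-1},u_i)$ is the standard local contribution at the vertex $v_i$ of $Q$; it is $0$ when $v_i$ is not a genuine vertex (i.e. $v_{i-1},v_i,v_{i+1}$ are collinear) and is a positive integer measuring the ``index'' of the cone at $v_i$ otherwise. The key classical fact I would invoke is that for a reflexive polygon $Q$ one has $\#(\partial Q\cap\Z^2) + \sum_{i}\det(u_{i-1},u_i) = 12$ — equivalently, writing $\ell_i$ for the lattice length of the $i$-th edge of $Q$ and noting $\#\partial Q\cap\Z^2 = \sum \ell_i$, this is the ``12 Property'' that the sum of the affine edge-lengths and the orders of the vertices of a reflexive polygon is $12$, as stated in the paragraph after Proposition \ref{thm:ell-intro}. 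So for reflexive $Q$ the proposition follows immediately once one also checks $2\,\mathrm{Area}(Q) = \#\partial Q\cap\Z^2$, i.e. that $Q$ has no interior lattice point besides $0$; by Pick this is exactly $2\,\mathrm{Area}(Q) = 2\cdot 1 + \#\partial Q - 2 = \#\partial Q$.

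The remaining work — and the main obstacle — is that the hypotheses do \emph{not} force $Q$ to be convex-at-every-$v_i$ or to be reflexive; the $v_i$ need only be distinct primitive vectors whose rays fill the plane, and a ``reflex'' configuration is possible where $Q=\Conv\{v_i\}$ has some $v_j$ in its interior. To handle this I would argue by a sweep/continuity argument: start from any reflexive polygon realizing a subset of the rays and perform elementary moves — either (a) replacing a vertex $v_i$ by $v_i + (v_{i+1}-v_i)$ or $v_i+(v_{i-1}-v_i)$ (sliding along an edge), or (b) inserting a new primitive vector on an edge, or (c) the inverse operations — and check that each move leaves $2\,\mathrm{Area}(P) + \sum\det(u_{i-1},u_i)$ unchanged. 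Move (b)/(c) is immediate (areas and the full vertex-sum are unaffected by subdividing a straight edge, since the inserted $\det = 0$). For move (a) one does a direct local computation: the change in $2\,\mathrm{Area}$ is $\pm\det$ of two adjacent edge vectors, and the change in the vertex-sum at $v_{i-1},v_i,v_{i+1}$ exactly cancels it — this is the one genuinely computational lemma, and it is where the ``legal loop'' bookkeeping of \cite{PR} is really doing the work; in the reflex case one simply allows the hull to become non-convex and tracks signed contributions, all of which reorganize into the same total. Since every admissible configuration $\{v_1,\ldots,v_m\}$ is connected to a standard reflexive one (e.g. the square with vertices $\pm e_1,\pm e_2$, for which $2\,\mathrm{Area}=8$ and $\sum\det = 4$, giving $12$) by a finite sequence of such moves, the invariance yields the value $12$ in general. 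I expect the delicate point to be verifying the move-invariance uniformly across convex and reflex local configurations and confirming connectivity of the move-graph; everything else is Pick's theorem and the classical reflexive ``12'' identity.
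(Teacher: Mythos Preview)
Your overall strategy---check a base case, then show invariance under elementary moves---matches the paper's, but the execution has genuine gaps. First, you set $Q=\Conv\{v_i\}$ and claim $P=Q$, but the hypotheses allow non-convex $P$ (see the Hirzebruch $\mathbb{F}_3$ example in Figure~\ref{fig:ell-non-Fano-eg}); in that case $P\subsetneq Q$ and $\det(u_{i-1},u_i)$ can be negative, so your Pick-theorem reduction and your assertion that the vertex contributions are positive both fail. Second, invoking the reflexive ``12 Property'' from the introduction is circular: that sentence is exactly the convex special case of the present proposition, and this is where the paper proves it. Third, your move (a) as written is vacuous, since $v_i+(v_{i+1}-v_i)=v_{i+1}$ is already in the list; and even with a repaired notion of move, the connectivity of all admissible configurations under such moves is asserted without argument.

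The paper's proof sidesteps all of this via toric geometry. One first subdivides so that every $\det(v_i,v_{i+1})=1$; this amounts to inserting the lattice points lying on each segment $v_iv_{i+1}$, which preserves both the area and the vertex sum (the new vertices contribute $\det(u,u)=0$). The resulting primitive vectors are then the rays of a smooth complete toric surface with toric divisors $D_i$, and a direct determinant computation (equivalently, adjunction on $D_i\cong\bP^1$) gives $\det(u_{i-1},u_i)=2+D_i^2$, so the left side becomes $3m+\sum_i D_i^2$. This quantity is visibly invariant under toric blow-up at a fixed point ($m\mapsto m+1$, one new $(-1)$-curve, and the two adjacent $D_i^2$ each drop by $1$), and since every smooth complete toric surface is connected to $\bP^2$ by blow-ups and blow-downs, the value equals the one for $\bP^2$, namely $9+3=12$. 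The toric interpretation thus supplies exactly the ingredients---an independent base case and a connectivity statement---that your proposal leaves open.
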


\begin{proof}
First we consider the simplest case where $v_1=(1,0)$, $v_2=(0,1)$ and $v_3=(-1,-1)$.  It is easy to check the equality directly.  To get better geometric understanding, we consider the complete fan generated by these vectors, which gives the toric manifold $\bP^2$.  Then $2 \, \mathrm{Area}(P) = \sum_{i=1}^m\det(v_i,v_{i+1})=m=3$, the number of irreducible toric divisors.  Moreover $\det (u_{i-1},u_{i}) = c_1 \cdot D_i$ where $c_1 = \sum_{i=1}^m D_i$ and $D_i$ denotes the irreducible toric divisors corresponding to $u_i$.  We have $c_1 \cdot D_i = 2+D_i\cdot D_i$ where $2$ comes from the Euler characteristic of each irreducible toric divisor (which is topologically a sphere).  Hence the LHS of the above equals
$$ m + \sum_{i=1}^m (2+D_i^2) = 3m + \sum_{i=1}^m D_i^2 $$
which is equal to $12$ for $\bP^2$.

In general, by further subdividing $P$, we can always assume that $\det (v_i, v_{i+1}) = 1$.  Then the LHS always equals $3m + \sum_{i=1}^m D_i^2$ of the corresponding toric manifold.  We want to prove that it is always $12$.  Since every toric manifold can be obtained from $\bP^2$ by successively blowing up and down at toric points, it suffices to prove that $3m + \sum_{i=1}^m D_i^2$ is invariant under blow-up.  Blowing up at a toric point increases $m$ by $1$, adds a new exceptional curve of self-intersection $-1$, and decreases the self-intersection numbers of the two adjacent divisors by $1$.  Thus the total effect is $0$ and $3m + \sum_{i=1}^m D_i^2$ is $12$.
\end{proof}

\begin{defn}
Assume the notations in Proposition \ref{prop:12}.  The integer $\det (u_{i-1},u_{i})$ is called the order of the vertex $v_i$.
\end{defn}

Suppose $P$ in Proposition \ref{prop:12} is a convex polygon.  Regarding $P$ as the moment polygon of a toric orbifold, $\det (u_{i-1},u_{i})$ is the order of the isotropy group of the orbifold point corresponding to $v_i$.

\subsection{Affine rational elliptic surfaces with singularities}

\begin{prop}[Affine rational elliptic surfaces] \label{prop:not-conv}
Define $P$ (which is not necessarily convex) as in Proposition \ref{prop:12}.  There exist two tropical surfaces with singularities $(\cA,\cP)$ and $(\cA',\cP')$ associated to $P$ having the following properties.
\begin{enumerate}
\item Both $\cA$ and $\cA'$ are discs whose boundaries are affine circles, with the affine length being the number of lattice points contained in the boundary of $P$.  (An affine circle is a topological circle in $\cA$ whose intersection with $\cA-\Delta$ is an affine submanifold.)
\item The interior of $(\cA,\cP)$ contains an affine circle $C$ formed by some edges in $\cP$.  The interior of $(\cA',\cP')$ contains the polygon $P$ whose edges are affine line segments which form a part of $\cP'$.
\item The affine circle $C \subset \cA$ contains singular points (called `outer') which are in one-to-one correspondence with the edges of $P$.  The boundary of the polygon $P \subset \cA'$ contains singular points (called `inner') which are in one-to-one correspondence with the edges of $P$.  The multiplicity of such a singular point is equal to the affine length of the corresponding edge of $P$.
\item The open disc bounded by $C \subset \cA$ contains singular points (called `inner') which are in one-to-one correspondence with the corners of $P$.  The complement of the  polygon $P \subset \cA'$ contains singular points (called `outer') which are in one-to-one correspondence with the corners of $P$.  The multiplicity of such a singular point equals the determinant of the two primitive tangent vectors of $P$ at the corresponding corners (which is negative when the corresponding corner is non-convex).
\item The total number of singular points (counted with multiplicities) in $\cA$ or $\cA'$ equals $12$.
\end{enumerate}
\end{prop}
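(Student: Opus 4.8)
The plan is to construct the two tropical surfaces $(\cA,\cP)$ and $(\cA',\cP')$ explicitly as gluings of local pieces, and then verify properties (1)--(5) by tracking the combinatorics of $P$. The building blocks are the affine $A_{k-1}$ singularity of Section 3.1 (the ``edge'' pieces, attached to each edge of $P$) and the orbifolded trivalent/Gorenstein vertices of Sections 3.7--3.8 restricted to one dimension lower (the ``vertex'' pieces, attached to each corner of $P$, where the relevant monodromy multiplicity is $\det(u_{i-1},u_i)$, i.e.\ the order of the vertex). For $\cA$ I would place an affine circle $C$ formed by edges of $\cP$, putting one outer singular point of multiplicity equal to the affine length of the $i$-th edge of $P$ on the segment of $C$ dual to that edge, and one inner singular point of multiplicity $\det(u_{i-1},u_i)$ inside the disc bounded by $C$, near the cone over the $i$-th corner. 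The surface $\cA'$ is defined to be the discrete Legendre dual, which by the self-dual nature of the $A_{k-1}$ pieces and the positive/negative duality of the vertex pieces automatically swaps ``inner'' and ``outer'' and realizes $P$ itself as a sub-polyhedron; this gives (2), (3), (4) once the local duality is in place. The boundary behavior in (1) is read off from the outermost layer of polytopes: the boundary affine circle has affine length equal to the number of edges of the unit-subdivided boundary of $P$, which is the number of lattice points on $\partial P$.

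The key steps, in order, are: (i) fix a unit subdivision of each edge and each triangle $\{0,v_i,v_{i+1}\}$ of $P$ so that all relevant determinants are $1$ (this is harmless by the subdivision remarks, Remark 3.6 and the opening of the proof of Proposition 4.1, and it lets me assemble the surface from standard local charts); (ii) glue the edge pieces and vertex pieces along their common faces, checking that the fan structures and the multivalued PL function $\phi$ (given locally by \eqref{eq:phi-gc} on the relevant cones, plus an integral PL function fixing vertex positions as in Section 3.6) agree on overlaps, so that $(\cA,\cP,\phi)$ is a bona fide tropical surface with the prescribed discriminant; (iii) verify the monodromy at each singular point is as claimed --- at an outer point on $C$ it is the $A_{k-1}$ monodromy of Section 3.1 with $k$ the edge's affine length, and at an inner point it is the ``order of the vertex'' monodromy coming from the trivalent/Gorenstein vertex construction, with the sign of the multiplicity dictated by convexity of the corner exactly as in Proposition 3.24; (iv) define $(\cA',\cP',\phi')$ as the discrete Legendre transform and invoke the local duality results (Propositions 3.3 and the duality statement in Proposition 3.24) to transfer properties (2)--(4); (v) finally, sum multiplicities: the outer points contribute $\sum_i(\text{affine length of edge }i) = 2\,\mathrm{Area}(P)$ (each unit edge contributing $1$, matching $2\,\mathrm{Area}$ after subdivision, or directly $\sum\det(v_i,v_{i+1})$ before) and the inner points contribute $\sum_i\det(u_{i-1},u_i)$, so the total is $12$ by Proposition \ref{prop:12}. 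The same count holds for $\cA'$ since Legendre duality preserves multiplicities.

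The main obstacle I expect is step (ii)--(iii): making the gluing globally consistent, i.e.\ checking that the local fan charts around adjacent edge and vertex pieces are compatible as integral affine charts (the transition maps must lie in $\GL(2,\Z)\ltimes\R^2$ away from $\Delta$), and that the monodromy representation of $\pi_1(\cA-\Delta)$ is generated by exactly the loops around the prescribed points with exactly the prescribed matrices. In particular one must be careful that the ``outer'' structure near $C$ glues to the Gorenstein vertex interior without introducing spurious singularities, and that the boundary circle is genuinely affine (the chart transitions across $\partial P$'s subdivision points are trivial, as in Remark \ref{rem:subdivide}). Once the local-to-global assembly is pinned down, properties (1), (5) and the Legendre duality in (2)--(4) follow by bookkeeping, and the numerical identity $12$ is precisely Proposition \ref{prop:12} applied to the multiplicity data.
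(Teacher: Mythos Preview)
There is a genuine gap in step (iv): you define $(\cA',\cP')$ as the discrete Legendre transform of $(\cA,\cP)$, but this is not what the proposition asserts and it would not give the stated properties.  Legendre duality sends $\cA_P$ to $\cA'_{\check{P}}$ (for the dual polygon $\check{P}$), not to $\cA'_{P}$; this is exactly what Proposition~\ref{thm:ell} says afterwards.  Property~(2) requires that $\cA'$ contain \emph{the same polygon $P$} as a sub-complex, and properties~(3)--(4) pin the multiplicities to the edge lengths and vertex orders of $P$ itself.  If you take the Legendre dual you will instead see $\check{P}$ and its combinatorics, so the statement fails.  In the paper both $(\cA,\cP)$ and $(\cA',\cP')$ are built directly from $P$: for $\cA$ one glues the triangles $\Conv\{0,v_i,v_{i+1}\}$ (forming $P$) to an outer annulus made of parallelograms $\Conv\{0,v_i,v_{i-1}-v_i,v_{i-1}\}$; for $\cA'$ one replaces the triangulated $P$ by the single polygon $P$ and attaches the same ring of parallelograms, with different fan structures at the $v_i$.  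The relation between $\cA$ and $\cA'$ is an ``inversion'' swapping inner and outer singular points, not a Legendre transform.

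Two smaller issues compound this.  First, your step~(i) subdivision ``so that all relevant determinants are $1$'' is at odds with properties~(3)--(4), which demand singular points of multiplicity exactly the edge lengths $\det(v_i,v_{i+1})$ and vertex orders $\det(u_{i-1},u_i)$; subdividing would replace each such point by several simple ones and change the polyhedral decomposition you are asked to produce.  The paper does \emph{not} subdivide here --- it writes down the fan structures at $0$ and at each $v_i$ directly and reads off the multiplicities from the non-trivial change of chart across the edges $\{0,v_i\}$ and $\{v_i,v_{i+1}\}$.  Second, invoking the three-dimensional orbifolded trivalent/Gorenstein vertices of Sections~\ref{sec:orb_vert}--\ref{sec:Gor} ``restricted to one dimension lower'' is both unnecessary and imprecise: the construction is entirely two-dimensional and uses only the $A_{k-1}$ local model.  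Your numerical check in step~(v) is fine (indeed $\det(v_i,v_{i+1})$ equals the affine length of the edge $v_iv_{i+1}$ since each triangle has no interior lattice point and the $v_i$ are primitive), and property~(5) then follows from Proposition~\ref{prop:12} as you say --- but you need the correct direct construction of $\cA'$ before you can claim it.
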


\begin{proof}
$(\cA,\cP)$ is defined as follows.  Take the triangles with corners $0,v_i,v_{i+1}$, and the parallelograms with corners $0,v_i,v_{i-1}-v_i,v_{i-1}$.  The triangles are glued to give the polygon $P$.  For the parallelograms the sides $\{0,v_i\}$ are glued to $\{v_{i}-v_{i+1},v_{i}\}$.  
The sides $\{v_i,v_{i-1}\}$ of the triangles are glued to the sides $\{0,v_{i-1}-v_i\}$ of the parallelograms.  See the left of Figure \ref{fig:ell-non-Fano-eg} for an example.

The fan structure at the vertex $0$ of a triangle is trivial.  At the vertex $v_i$ of a triangle, the fan is generated by $e_1,-e_1,e_2,-e_2$ and they are mapped to the primitive vector in the direction $v_{i-1}-v_i$, the primitive vector in the direction $v_{i+1}-v_i$, $v_i$ and $-v_i$ respectively.  At the vertex $v_i$ of a parallelogram, the fan is generated by $e_1,-e_1,-e_2$ and they are mapped to the primitive vector in the direction $v_{i-1}-v_i$, the primitive vector in the direction $v_{i+1}-v_i$ and $-v_i$ respectively.  This gives $(\cA,\cP)$.  Each edge of a triangle contains a singular point.  The singular points in the edges $\{0,v_i\}$ are called inner and those in $\{v_i,v_{i+1}\}$ are called outer.  It is a direct computation that the multiplicities are as stated in (3) and (4).

The sides $\{v_i,v_{i-1}\}$ of the parallelograms form the boundary of $\cA$ which is an affine circle.  The affine length is the sum of affine lengths of $v_i-v_{i-1}$, which is equal to the number of lattice points in the boundary of $P$.  This gives (1).  For (2), $C$ is given by the union of the sides $\{v_i,v_{i+1}\}$ of the triangles.  (5) follows from Proposition \ref{prop:12}.

$(\cA',\cP')$ is defined as follows.  Take the polygon $P$ with corners $v_i$, and the parallelograms with corners $0,v_i,v_{i-1}-v_i,v_{i-1}$.  For the parallelograms the sides $\{0,v_i\}$ are glued to $\{v_{i}-v_{i+1},v_{i}\}$.  
The sides $\{v_i,v_{i-1}\}$ of $P$ are glued to the sides $\{0,v_{i-1}-v_i\}$ of the parallelograms.  See the right of Figure \ref{fig:ell-non-Fano-eg} for an example.  

The fan at the vertex $v_i$ of $P$ is generated by the three vectors: $v_i$, the primitive vector along $v_{i+1}-v_i$ and that along $v_{i-1}-v_i$.  The fan at the vertex $v_i$ of a parallelogram is generated by $e_1,-e_1,-e_2$, and they are mapped to the primitive vector in the direction $v_{i-1}-v_i$, the primitive vector in the direction $v_{i+1}-v_i$ and $-v_i$ respectively.  Each edge of $P$ contains a singular point which is called inner.  Each edge $\{0,v_i\}$ of a parallelogram contains a singular point which is called outer.  Similarly one can verify the properties (1)-(5) for $(\cA',\cP')$.
\end{proof}

$(\cA,\cP)$ and $(\cA',\cP')$ are affine manifolds with toric polyhedral decompositions.  We call them to be affine rational elliptic surfaces.  They violate the positivity condition in Definition 1.54 of \cite{GS1} if $P$ is not convex.
Indeed $\cA$ and $\cA'$ are related by inversion; the inner singular points of $\cA$ correspond to the outer singular points of $\cA'$, and vice versa.
Figure \ref{fig:ell-non-Fano-eg} shows two examples for Proposition \ref{prop:not-conv} where $P$ is not convex.  One can glue local Lagrangian fibrations with monodromy $\left(\begin{array}{cc}
1 & 1 \\
0 & 1
\end{array}\right)$ and $\left(\begin{array}{cc}
1 & -1 \\
0 & 1
\end{array}\right)$ to obtain a Lagrangian fibration on a symplectic $4$-fold.  Note that negative multiplicities can only occur for a Lagrangian fibration but not for a holomorphic fibration.

\begin{figure}[htb!]
  \centering
   \begin{subfigure}[b]{0.7\textwidth}
   	\centering
    \includegraphics[width=\textwidth]{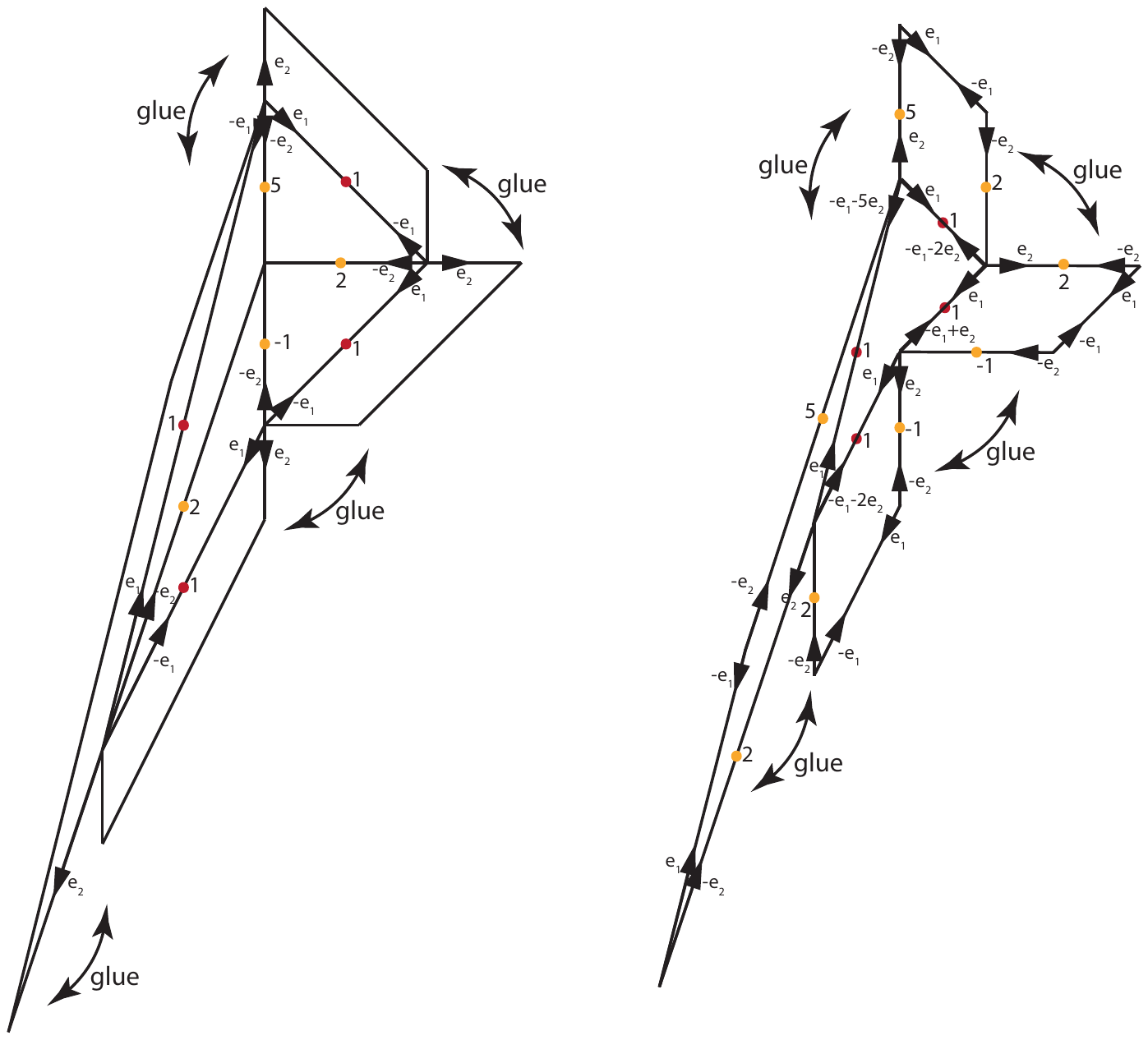}
    \caption{}
    \label{fig:ell-non-Fano-eg1}
   \end{subfigure}
   \hspace{10pt}
   \begin{subfigure}[b]{0.7\textwidth}
   	\centering
    \includegraphics[width=\textwidth]{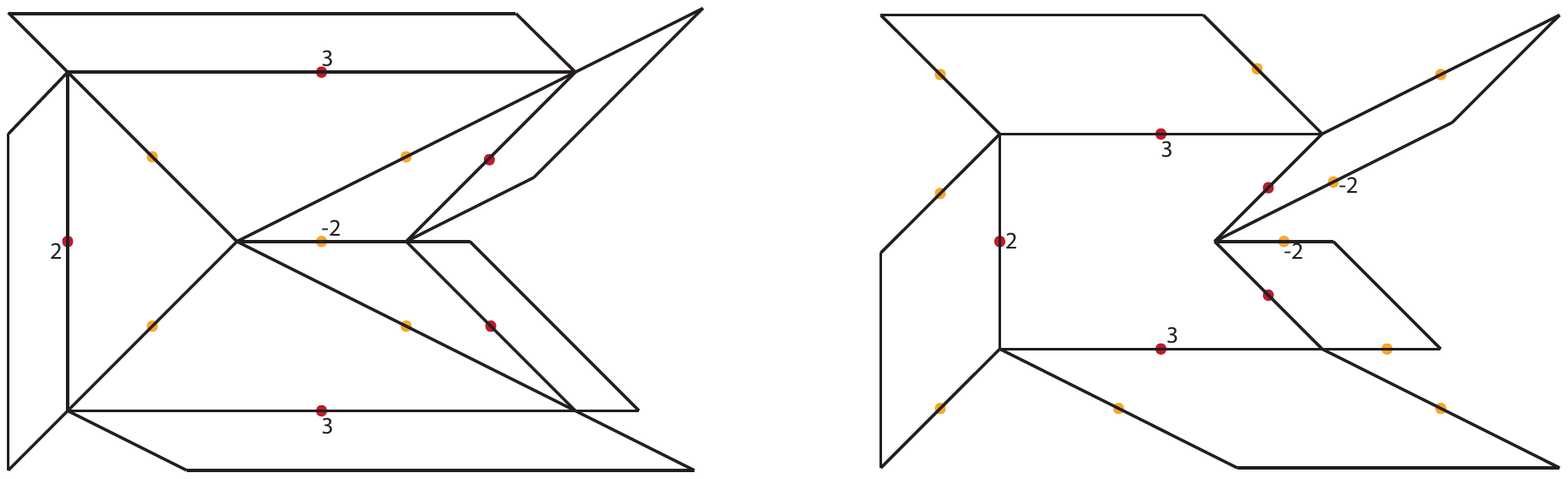}
    \caption{}
    \label{fig:ell-non-Fano-eg2}
   \end{subfigure}
\caption{Two examples of non-convex polygons.  (A) is constructed from the fan polytope of the Hirzebruch surface $\mathbb{F}_3$.  The `12' property still holds.}
  \label{fig:ell-non-Fano-eg}
\end{figure}

Note that the dual of a non-convex polygon in Proposition \ref{prop:12} is not a polygon.  It is a legal loop which is discussed in Section \ref{sec:legal}.  For this reason let's go back to the important case that $P$ is convex.  By definition $P$ is a reflexive polygon which has exactly one interior lattice point.  Its dual is again a reflexive polygon $\check{P}$.  Without loss of generality we assume that $u_i$ for $i=1,\ldots,m$ are the vertices of $P$.  Figure \ref{fig:ell-tor} lists all the affine surfaces constructed in this way.  (For simplicity we only list $\cA$ but not $\cA'$.)  

Each polygon in $(\cA'_{\check{P}},\cP'_{\check{P}})$ gives a piecewise linear function (unique up to addition of a linear function) supported on the dual fan at the corresponding vertex in $(\cA,\cP)$, and vice versa.  This gives a multivalued piecewise linear function $\phi$ on $(\cA,\cP)$.  Thus for a reflexive polygon we have a tropical manifold $(\cA,\cP,\phi)$.

\begin{figure}[h]
\begin{center}
\includegraphics[scale=0.5]{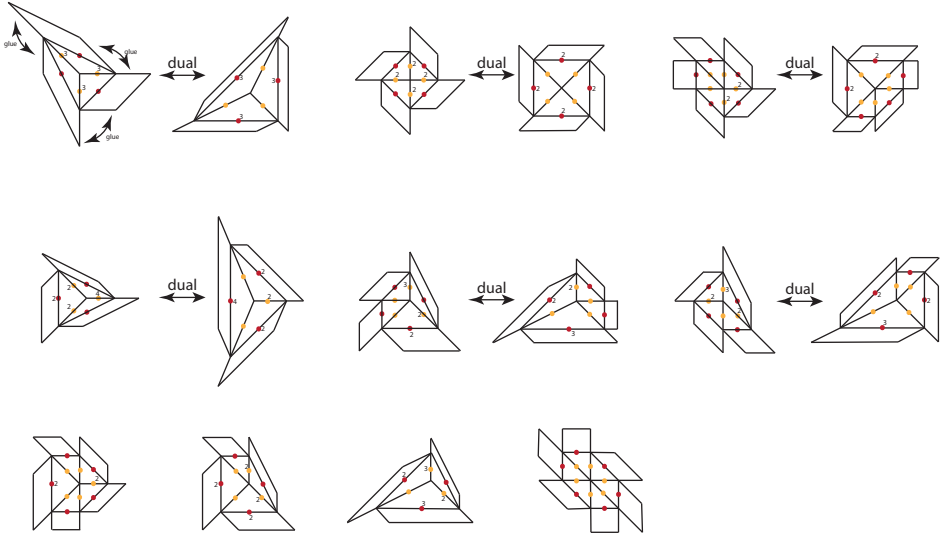}
\caption{Affine rational elliptic surfaces with $A_{k-1}$ singularities coming from reflexive polygons.  The dual polygons give mirror pairs.  The four in the last row are self-dual.}
\label{fig:ell-tor}
\end{center}
\end{figure}

By gluing in the Lagrangian fibrations on $A_{k-1}$ singularities, one obtains the following, which is a more precise version of Proposition \ref{thm:ell-intro}.

\begin{prop}[Symplectic rational elliptic surfaces] \label{thm:ell}
Each reflexive polygon $P$ corresponds to two symplectic rational elliptic surfaces with $A_k$ singularities $S$ and $S'$ together with Lagrangian fibrations.  Both $S$ and $S'$ satisfy the following properties.
\begin{enumerate}
\item The base of each fibration is topologically a closed disc.  The inverse image of the boundary is a symplectic torus.
\item The total number of interior singular fibers (counted with multiplicities) equals $12$.
\item The $A_k$ singularities can be divided into two groups, which are in one-to-one correspondence with non-standard corners of $P$ and non-standard simplices formed by $\{v_i,v_{i+1}\}$ respectively.  One has $k = A-1$ where $A$ is the area of the parallelogram spanned by primitive tangent vectors at the corner in the first case, or the area of the parallelogram spanned by $v_i,v_{i+1}$ in the second case.
\item The singular fibers of the Lagrangian fibration can be divided into two groups, which are in one-to-one correspondence with corners of $P$ and simplices formed by $\{v_i,v_{i+1}\}$ respectively.  The multiplicity equals $A$ defined above.
\end{enumerate}
For a pair of dual reflexive polygons $(P,\check{P})$, we have the mirror pairs $(S_P-D_{S_P},S_{\check{P}}'-D_{S_{\check{P}}'})$ and $(S_P'-D_{S_P'},S_{\check{P}}-D_{S_{\check{P}}})$ in the sense that the corresponding tropical geometries are Legendre dual to each other, where $D_{S_P},D_{S_{\check{P}}},D_{S_P'},D_{S_{\check{P}}'}$ denote the symplectic tori defined in (1).
\end{prop}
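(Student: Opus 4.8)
The plan is to realise the tropical surfaces $(\cA_P,\cP_P,\phi_P)$ and $(\cA'_P,\cP'_P,\phi'_P)$ attached to $P$ in Proposition~\ref{prop:not-conv} as bases of singular Lagrangian torus fibrations, by gluing the local models of Section~\ref{sec:loc-aff} following Casta\~no-Bernard--Matessi \cite{CM1} together with the almost-toric surgery description of Symington and Leung--Symington \cite{Symington,LS}, and then to extract properties (1)--(4) and the mirror assertion from the combinatorics already recorded in Propositions~\ref{prop:12} and~\ref{prop:not-conv}. Concretely: over $\cA_P-\Delta$ one has the semiflat symplectic manifold $T^*(\cA_P-\Delta)/\Lambda^*$ with its Lagrangian torus fibration and Lagrangian section; near each node of $\Delta$ of multiplicity $A$ one glues in the Lagrangian fibration on the local $A_{A-1}$ surface singularity described in Section~\ref{sec:loc-aff} (orbifold fan structures in $\cP_P$ are first removed by the subdivision of Remark~\ref{rem:subdivide}, which does not change the affine structure). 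The local monodromies around the nodes are the $A_{A-1}$-type shears of Section~\ref{sec:loc-aff}, so these local fibrations are mutually compatible and glue to a Lagrangian torus fibration over $\cA_P$. One then completes the base along the boundary circle $\partial\cA_P$ by collapsing the cycle of the torus fibre invariant under the boundary monodromy; this produces a closed symplectic surface $S=S_P$, with $A_k$ orbifold points at the nodes of multiplicity $>1$, containing the symplectic $2$-torus $D_{S_P}:=\pi^{-1}(\partial\cA_P)$. The same construction applied to $(\cA'_P,\cP'_P)$ produces $S'=S_P'$ and $D_{S_P'}$.

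Properties (1)--(4) are then a translation of Proposition~\ref{prop:not-conv}. Part (1) is \ref{prop:not-conv}(1) together with the boundary collapse; to identify $S_P$ with a rational elliptic surface carrying the stated $A_k$ singularities one resolves the orbifold points, so that the affine base acquires, by Section~\ref{sec:loc-aff}, exactly $12$ focus--focus nodes in total (each node of multiplicity $A$ separating into $A$ simple ones, with the totals summing to $12$ by Propositions~\ref{prop:12} and~\ref{prop:not-conv}(5)), and one invokes the classification of \cite{Symington,LS} to identify the resolved smooth symplectic manifold with the rational elliptic surface and $D_{S_P}$ with a smooth anticanonical elliptic fibre. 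Part (2) is \ref{prop:not-conv}(5) itself, since a node of multiplicity $A$ carries a singular Lagrangian fibre of multiplicity $A$. For (3) and (4): by \ref{prop:not-conv}(3)--(4) the nodes of $\Delta$ split into those in bijection with the corners of $P$, of multiplicity equal to the order $\det(u_{i-1},u_i)$ of the vertex (the area $A$ of the parallelogram spanned by the primitive tangent vectors of $P$ there), and those in bijection with the boundary simplices $\{v_i,v_{i+1}\}$, of multiplicity $\det(v_i,v_{i+1})$ (the area $A$ of the parallelogram spanned by $v_i,v_{i+1}$); near a node of multiplicity $A$ the glued local model carries the $A_{A-1}$ surface singularity, so $S_P$ acquires an $A_k$ orbifold point with $k=A-1$ there together with a singular Lagrangian fibre of multiplicity $A$, which is exactly (3) and (4).

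For the mirror assertion it would suffice to check that $(\cA_P,\cP_P,\phi_P)$ and $(\cA'_{\check{P}},\cP'_{\check{P}},\phi'_{\check{P}})$ are discrete Legendre dual polarized tropical manifolds, and likewise with the two families interchanged. Away from the discriminant a Lagrangian fibration is modelled on $T^*B_0/\Lambda^*$ while $TB_0/\Lambda$ is its semiflat mirror, and the discrete Legendre transform interchanges these two together with the roles of $B_0$ and $\check B_0$; since the $A_{A-1}$ local models entering the gluing are themselves exchanged by Legendre duality (the $A_{k-1}$ singularity being self-mirror), the semiflat pieces together with the slab functions of the local models glue compatibly on the two sides, and the boundary data identifies $D_{S_P}$ with $D_{S_{\check{P}}'}$ as the distinguished anticanonical torus. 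The required Legendre duality is exactly how the polarizations were defined after Proposition~\ref{prop:not-conv}: each polytope of $(\cA'_{\check{P}},\cP'_{\check{P}})$ is the Legendre dual polytope of $\phi_P$ around the corresponding vertex of $(\cA_P,\cP_P)$, and the normal fan of each polytope of $(\cA_P,\cP_P)$ is the fan structure at the dual vertex of $(\cA'_{\check{P}},\cP'_{\check{P}})$; this is the global version, over the affine circle, of the local Legendre duality for the $A_{k-1}$ singularity established in Section~\ref{sec:loc-aff}. It then follows that $S_P-D_{S_P}$ is mirror to $S_{\check{P}}'-D_{S_{\check{P}}'}$ and that $S_P'-D_{S_P'}$ is mirror to $S_{\check{P}}-D_{S_{\check{P}}}$.

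The main obstacle will be the first step. Casta\~no-Bernard--Matessi \cite{CM1,CM2} set up their gluing for \emph{simple} (and, in the conifold paper, positive) affine manifolds, whereas the bases here are non-simple exactly at the $A_{A-1}$ nodes and, in addition, have boundary; one must therefore verify that the almost-toric machinery of \cite{Symington,LS} extends to produce a closed symplectic \emph{orbifold} surface from these data, that the boundary collapse yields a genuinely smooth symplectic $2$-torus disjoint from the orbifold locus, and that the resulting surface has the asserted diffeomorphism type. Checking the local-to-global compatibility of the monodromies (and of the polarizations $\phi_P$, $\phi'_{\check{P}}$) against the list in Figure~\ref{fig:ell-tor} is the bulk of the work; the remaining identifications of areas of parallelograms with multiplicities of singular fibres and with the $A_k$-type of the orbifold points are routine given Propositions~\ref{prop:12} and~\ref{prop:not-conv}.
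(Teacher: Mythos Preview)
Your proposal is correct and follows essentially the same approach as the paper: take the semiflat fibration $T^*(\cA-\partial\cA-\Delta)/\Lambda^*$, glue in the local $A_{k-1}$ Lagrangian fibration model at each node via action-angle coordinates, complete along the boundary circle, and read off (2)--(4) from Proposition~\ref{prop:not-conv} and the mirror statement from Legendre duality of $\cA_P-\partial\cA_P$ and $\cA'_{\check P}-\partial\cA'_{\check P}$.

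Two remarks on where you are working harder than necessary. First, your ``main obstacle'' paragraph worries that the Casta\~no-Bernard--Matessi gluing in \cite{CM1} is stated only for simple affine manifolds; the paper sidesteps this entirely by gluing the explicit local model \eqref{eq:fib_An} directly (its base is affine isomorphic to an affine $A_{k-1}$ neighbourhood, so action-angle coordinates identify the punctured local fibration with the semiflat one), so no appeal to the general \cite{CM1} machinery or to simplicity is needed in dimension two. Second, for the boundary the paper does not phrase it as a collapse but simply glues in the explicit standard fibration $\pi_0:(\C^\times/L\Z)\times\C\to(\R/L\Z)\times\R_{\geq 0}$, whose base near the boundary is affine isomorphic to a neighbourhood of $\partial\cA$; the boundary torus $D_S$ is then $\bS^1\times(\R/L\Z)$ on the nose. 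Your description via collapsing the monodromy-invariant cycle is of course equivalent, and your additional step of resolving the $A_k$ points to invoke the \cite{Symington,LS} classification (to justify the name ``rational elliptic surface'') is a reasonable extra check that the paper leaves implicit.
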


\begin{proof}
$T^*(\cA-\partial \cA-\Delta)/\Lambda^*$ gives a symplectic manifold, where $\Delta \subset \cA$ is the collection of singular points and $\Lambda\subset T(\cA-\partial \cA-\Delta)$ is the local system induced from the integral affine structure.  The Lagrangian fibration given in \eqref{eq:fib_An} has monodromy $\left(\begin{array}{cc}
1 & k \\
0 & 1
\end{array}\right)$
around the singular point $(0,0)$ in the base.  Thus its base is affine isomorphic to an affine $A_{k-1}$ singularity.  By using the action-angle coordinates, the Lagrangian fibration over a punctured neighborhood of $(0,0)$ is isomorphic to the fibration on $T^*(\cA-\Delta)/\Lambda$ around an affine $A_{k-1}$ singularity in $\Delta$.  Thus the local model can be glued in to give a Lagrangian fibration over $\cA-\partial \cA$.

$\partial \cA$ is an affine circle with length $L$.  We have the standard Lagrangian fibration $\pi_0:(\C^\times / L\Z) \times \C \to (\R/L\Z) \times \R_{\geq 0}$ (with the standard symplectic form).  A neighborhood of $(\R/L\Z) \times \{0\}$ in the base of $\pi_0$ is affine isomorphic to a neighborhood of $\partial \cA \subset \cA$.  Thus we can glue in a neighborhood of $\bS^1 \times (\R/L\Z)$ in $\pi_0$ and get a Lagrangian fibration over $\cA$.  This gives $S$.  The construction for $S'$ is similar.  The boundary divisor is the symplectic torus $\bS^1 \times (\R/L\Z)$.  Property (2)-(4) follow easily from Proposition \ref{prop:not-conv}.

It is easy to see that $\cA_P-\partial\cA_P$ and $\cA_{\check{P}}'-\partial\cA_{\check{P}}'$ are Legendre dual to each other.  Hence they form a mirror pair by the Gross-Siebert reconstruction program.
\end{proof}

\begin{remark}
We can take two such affine rational elliptic surfaces whose boundaries are affine circles with certain lengths, and glue them together (which are rescaled if necessary to match their boundary lengths) and obtain an affine sphere with singularities.  By the construction of \cite{CM1} it gives a symplectic K3 surface with $A_k$ singularities.  A pair of reflexive polygons $(P_1,P_2)$ and the dual $(\check{P}_1,\check{P}_2)$ produces a mirror pair of K3 surfaces with $A_k$ singularities.

In \cite{HZ1} a pair of dual reflexive polygons is used to construct an affine manifold corresponding to a Calabi-Yau manifold.  In the above proposition $P_1$ and $P_2$ are not necessarily dual to each other.  

Such a splitting is related to the conjecture of Doran-Harder-Thompson \cite{DHT} on mirror construction by gluing the Landau-Ginzburg mirrors of the components in a Tyurin degeneration.  See \cite[Section 7]{Kanazawa}.
\end{remark}

In Section \ref{sec:Schoen} we glue the product of an interval with an affine rational elliptic surface with another one to obtain a symplectic Calabi-Yau threefold with orbi-conifold singularities.

\subsection{Resolution and smoothing of rational elliptic surfaces with $A_k$ singularities} \label{sec:A_k-res}

Now we construct resolutions and smoothings of rational elliptic surfaces with $A_k$ singularities.  We shall stick with the example of a mirror pair given on the left of Figure \ref{fig:ell-tor-smoothing}.  All other rational elliptic surfaces associated to reflexive polygons have a similar construction.

\begin{figure}[h]
\begin{center}
\includegraphics[scale=0.6]{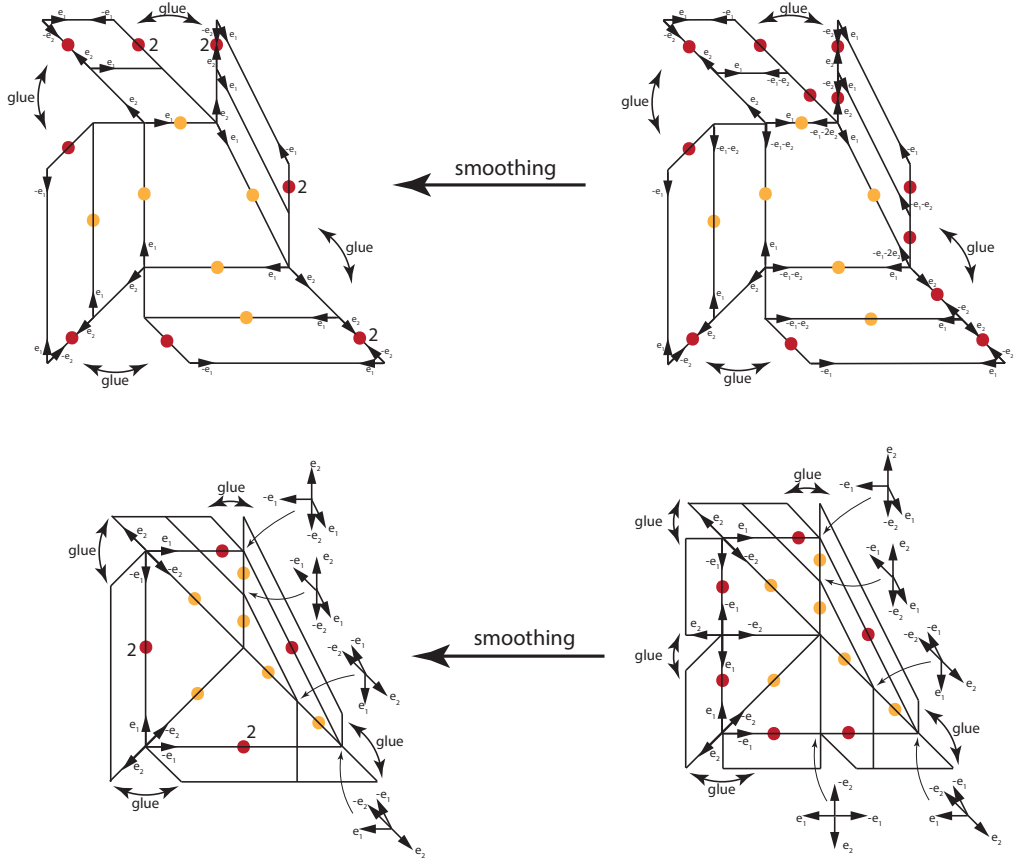}
\caption{An example of a mirror pair of elliptic surfaces with $A_n$ singularities.  The top left and bottom left figures show the mirror pair (this case the reflexive polygon is self-dual).  The top right and bottom right figures are their smoothings respectively.  The mirror of a smoothing of the top left figure is given by a resolution of the bottom left figure, which can be deduced by taking Legendre dual.
}
\label{fig:ell-tor-smoothing}
\end{center}
\end{figure}

Consider the second graph of the bottom row of Figure \ref{fig:ell-tor}, and compare with the bottom left of Figure \ref{fig:ell-tor-smoothing}.  In this example there are four inner and four outer singular points, and they have multiplicities $2,2,1,1$ respectively.  

First we take a smoothing of all the inner singular points, meaning that we separate each inner singular point with multiplicity $k>1$ into $k$ points with multiplicity $1$, and the $k$ points lie in the original monodromy-invariant affine line.  This is done by a certain refinement of the polyhedral decomposition and taking a suitable fan structure at each new vertex to match the monodromy.  There are several choices, and we have fixed one choice in the bottom left of Figure \ref{fig:ell-tor-smoothing}.

Then we take the Legendre dual, which gives a resolution of all the inner singular points for the dual polygon.  (In the example the polygon is self-dual.)  Note that each inner singular point with multiplicity $k>1$ is separated into $k$ points with multiplicity $1$ lying in distinct parallel monodromy-invariant affine line.  See the top left of Figure \ref{fig:ell-tor-smoothing}.

Now we either resolve or smooth out all the outer singular points.  Since resolutions can be obtained by taking Legendre dual of smoothings, we just show the smoothings here.  For the bottom left of Figure \ref{fig:ell-tor-smoothing}, we refine the reflexive polygon by taking cone over the lattice points lying in the relative interior of each boundary edge, and take a suitable fan structure at all the boundary lattice points to obtain the bottom right of Figure \ref{fig:ell-tor-smoothing}.  For the mirror shown in the top left of Figure \ref{fig:ell-tor-smoothing}, we refine the outer polygons if necessary and modify the fan structures at the vertices of the outer polygons to obtain the top right of Figure \ref{fig:ell-tor-smoothing}.

All other affine rational elliptic surfaces with $A_k$ singularities can be treated similarly.  We summarize by the following proposition.

\begin{prop}
All rational elliptic surfaces with $A_k$ singularities $S$ and $S'$ associated with reflexive polygons can be resolved and smoothed out symplectomorphically.  For each pair of dual reflexive polygons $P$ and $\check{P}$, a resolution of the rational elliptic surface $S_P$ (or $S_P'$) is mirror to a smoothing of $S'_{\check{P}}$ (or $S_{\check{P}}$) in the sense of Gross-Siebert.
\end{prop}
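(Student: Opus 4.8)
The plan is to leverage the two constructions already in place: the affine rational elliptic surfaces $(\cA_P,\cP_P)$ and $(\cA_P',\cP_P')$ from Proposition \ref{thm:ell}, together with the local affine models for smoothing and resolution of an $A_{k-1}$ singularity from Section \ref{sec:loc-aff} (the left and right sides of Figure \ref{fig:affA_n}). First I would make precise the statement that every $A_k$ singular point in $\cA_P$ (resp. $\cA_P'$), whether inner or outer, admits a neighborhood which is affine isomorphic to a neighborhood of the central affine $A_{k-1}$ singularity of the middle picture of Figure \ref{fig:affA_n}; this follows from the monodromy computation already carried out (the monodromy around each such point is conjugate to $\left(\begin{smallmatrix} 1 & k \\ 0 & 1\end{smallmatrix}\right)$) together with the fact that an affine manifold with singularities is determined near an isolated singularity by its monodromy and the combinatorial type of the polyhedral decomposition around it. Having identified the local charts, I would replace each such chart by the corresponding local smoothing (all simple singularities collinear in the monodromy-invariant line) or local resolution (the simple singularities in distinct parallel lines), exactly as in Section \ref{sec:A_k-res}. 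The key point is that the gluing with the surrounding polyhedral decomposition of $\cA_P$ is unaffected away from a small neighborhood of the singular point, since the modification takes place strictly inside the star of the affected edge; one only has to choose the fan structures at the newly introduced vertices so that they match the unchanged outside charts, and Remark \ref{rem:subdivide} shows that any orbifolded fans that appear can be resolved by a further subdivision without changing the affine structure.

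Next I would upgrade this from a statement about affine manifolds to one about symplectic manifolds with Lagrangian fibrations. Here I invoke the construction of Castano-Bernard and Matessi \cite{CM1}: once we have the affine base $\cA$ with its polyhedral decomposition and (for the modified pieces) the explicit local Lagrangian fibrations on the smoothing/resolution of the $A_{k-1}$ singularity given by Equations \eqref{eq:fib_An}--\eqref{eq:fib_An'}, we can glue $T^*(\cA - \partial\cA - \Delta)/\Lambda^*$ to these local models, in the same way as in the proof of Proposition \ref{thm:ell}. Since the modification is localized, the gluing along the boundary $\partial\cA$ of the standard Lagrangian fibration $\pi_0$ producing the boundary symplectic torus $D_S$ goes through verbatim. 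The resulting symplectic four-folds are, by construction, symplectic resolutions (resp. smoothings) of $S_P$ (resp. $S_P'$): the exceptional loci of the resolution are the symplectic spheres sitting over the segments joining the collinear simple singular points, while the smoothing replaces the singular fiber by a chain of Lagrangian vanishing cycles — and one checks, using action-angle coordinates, that away from the modified neighborhoods the fibration is unchanged, so the smoothing/resolution is supported precisely at the $A_k$ locus.

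For the mirror statement, I would argue entirely on the combinatorial side via discrete Legendre transform. By Proposition \ref{thm:ell} the tropical manifolds $(\cA_P - \partial\cA_P)$ and $(\cA_{\check P}' - \partial\cA_{\check P}')$ are Legendre dual. The local smoothing and resolution models of $A_{k-1}$ are themselves Legendre dual to each other (this is the content of the discussion around Figure \ref{fig:affA_n}, where the left and right sides are exchanged by discrete Legendre transform), and since discrete Legendre transform is a local operation on the star of each cell, performing a smoothing of the inner and outer singular points of $\cA_P$ corresponds, cell by cell, to performing a resolution of the dual singular points of $\cA_{\check P}'$. Hence a resolution of $S_P$ and a smoothing of $S_{\check P}'$ arise from a Legendre-dual pair of polarized tropical manifolds, and by the Gross-Siebert reconstruction they form a mirror pair; the case of $S_P'$ and $S_{\check P}$ is identical after exchanging the roles of inner and outer. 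The main obstacle I anticipate is bookkeeping rather than conceptual: one must verify that the polarization $\phi$ can be extended compatibly over all the refined cells so that it remains strictly convex (for the resolution) and that the choices of collinear vs. parallel placement of simple singularities are made consistently on both sides of the Legendre duality; this amounts to chasing through the explicit piecewise linear functions written down in Section \ref{sec:loc-aff}, and no new idea beyond those already present in \cite{CM1,CM2} should be required.
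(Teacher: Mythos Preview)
Your proposal is correct and follows essentially the same approach as the paper: the paper presents no formal proof but instead works through the construction by explicit example in Section \ref{sec:A_k-res} (smoothing the inner singular points by refining the decomposition, taking Legendre dual to obtain a resolution of the mirror, then handling the outer points), and then states the proposition as a summary. Your argument is a faithful formalization of this, using the same local affine $A_{k-1}$ models, the same gluing via \cite{CM1}, and the same Legendre-duality argument for the mirror statement; the only difference is cosmetic---the paper treats inner and outer points in a specific order to make the Legendre-dual bookkeeping transparent, whereas you handle all singular points at once and defer the compatibility of $\phi$ to the final paragraph.
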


\subsection{Legal loops} \label{sec:legal}

Note that the dual of a non-convex polygon defined in Proposition \ref{prop:12} may not be a polygon.  A natural notion which is closed under duality is the legal loop defined in \cite{PR}.  The `12' Property holds for a general legal loop; the proof given in \cite{PR} uses modular forms.

\begin{defn}
A legal loop is a finite sequence of primitive vectors $\{v_1,\ldots,v_m\} \subset \Z^2$ with $v_i \not= v_{i+1}$ such that each triangle $\mathrm{Conv}\{0,v_i,v_{i+1}\}$, for $i=1,\ldots m$ where $v_{m+1}=v_1$, contains no interior lattice point.

A legal loop is said to be directed if $\det (v_i,v_{i+1})$ has the same sign for all $i$.
\end{defn}

The loop is formed by the union of edges connecting consecutive $v_i$.  Figure \ref{fig:legal-loop} and \ref{fig:legal-loop-2} show some examples.  For simplicity let's assume $(v_{i+1}-v_i) \nparallel (v_i - v_{i-1})$ for all $i$.  It is easy to see the following.

\begin{prop}
The dual (formed by taking the outward primitive vectors of the facets $\mathrm{Conv}\{v_i,v_{i+1}\}$ of the triangles $\mathrm{Conv}\{0,v_i,v_{i+1}\}$) of a legal loop is again a legal loop.  
\end{prop}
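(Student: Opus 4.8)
The plan is to verify directly that the dual sequence is again a legal loop, i.e.\ that it consists of primitive vectors, that consecutive vectors differ, and that each triangle it spans has no interior lattice point. Let $w_i$ denote the primitive outward normal to the facet $\mathrm{Conv}\{v_i,v_{i+1}\}$ of the triangle $\mathrm{Conv}\{0,v_i,v_{i+1}\}$; concretely $w_i = -R\cdot(v_{i+1}-v_i)/\gcd$, where $R$ is the counterclockwise rotation by $\pi/2$ as in Section~\ref{sec:orb_vert}, so $w_i$ is primitive by construction. The sequence $\{w_1,\ldots,w_m\}$ is the candidate dual legal loop. First I would check $w_i \neq w_{i+1}$: if $w_i = w_{i+1}$ then $v_{i+1}-v_i$ and $v_{i+2}-v_{i+1}$ are parallel, which is excluded by the standing assumption $(v_{i+1}-v_i)\nparallel(v_i-v_{i-1})$ (applied with the index shifted by one).

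The main content is the no-interior-lattice-point condition for $\mathrm{Conv}\{0,w_i,w_{i+1}\}$. Here I would use the standard dictionary between the triangle $\mathrm{Conv}\{0,v_i,v_{i+1}\}$ and the cone/fan picture: the condition that $\mathrm{Conv}\{0,v_i,v_{i+1}\}$ contains no interior lattice point is equivalent, when $\det(v_i,v_{i+1}) > 0$, to $v_i$ and $v_{i+1}$ forming a basis of $\Z^2$ (this is an elementary fact — a lattice triangle with a vertex at the origin and no interior lattice point and no lattice point in the relative interior of the opposite edge is unimodular; and if there is a lattice point in the relative interior of $\{v_i,v_{i+1}\}$ one subdivides, reducing to the unimodular case). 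Dually, by Pick's theorem or by direct computation, the vertex $v_{i+1}$ of the loop and the edge vectors $u_{i-1}, u_i$ (directions of $v_i-v_{i-1}$ and $v_{i+1}-v_i$) control the local geometry near that vertex, and taking outward normals interchanges the role of "edge" and "vertex" — the edge $\mathrm{Conv}\{v_i,v_{i+1}\}$ of the $v$-loop becomes the vertex $w_i$ of the $w$-loop, and the vertex $v_{i+1}$ becomes the edge $\mathrm{Conv}\{w_i,w_{i+1}\}$. So I would show $\det(w_i,w_{i+1}) = \det(u_i, \cdot)$-type quantity equal to the order of the vertex $v_{i+1}$, and conversely $\det(v_i,v_{i+1})$ equals (up to sign) the order of the vertex $w_i$ in the dual loop; since the $v$-loop being legal forces $\det(v_i,v_{i+1})=\pm 1$ after subdivision, the dual triangles are unimodular hence have no interior lattice point.

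Concretely, the cleanest route is: (i) reduce to $\det(v_i,v_{i+1}) = 1$ for all $i$ by inserting all lattice points on each edge $\mathrm{Conv}\{v_i,v_{i+1}\}$ — this does not change the loop as a subset of $\R^2$ and does not change its dual, since collinear insertions on an edge map to repetitions of the same outward normal, which one then collapses; (ii) once every $\{v_i,v_{i+1}\}$ is a $\Z$-basis, compute $w_i = -R(v_{i+1}-v_i)$ directly (no gcd needed), and verify $w_{i+1}-w_i = -R(v_{i+2}-2v_{i+1}+v_i)$ and $\det(w_i,w_{i+1}) = \det(v_{i+1}-v_i, v_{i+2}-v_{i+1})$; (iii) observe this last determinant equals the order of the vertex $v_{i+1}$ in the sense of the definition after Proposition~\ref{prop:12}, and that the triangle $\mathrm{Conv}\{0,w_i,w_{i+1}\}$ has no interior lattice point because one can run the same subdivision argument on the $w$-side, or because the primitive generators $w_i, w_{i+1}$ of two adjacent rays of the dual fan to a smooth (after subdivision) complete fan again span a smooth cone. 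The involutivity (applying the dual twice returns the original loop) then follows because $-R$ squares to $-\mathrm{id}$ and the double-normal of an edge recovers its direction; I would state this as a brief remark rather than belabor it.

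The step I expect to be the main obstacle is handling non-directed legal loops — where $\det(v_i,v_{i+1})$ changes sign — since then "outward normal" must be interpreted with a consistent orientation convention, and the subdivision-to-unimodular reduction needs care at the indices where the loop "reverses". I would address this by fixing the convention that $w_i = -R(v_{i+1}-v_i)$ up to primitivization (with no absolute value), so that the sign of $\det(v_i,v_{i+1})$ is carried along coherently, and then the no-interior-lattice-point condition — which is insensitive to sign — transfers without trouble; the directedness, when it holds, simply says all the $\det(w_i,w_{i+1})$ have a common sign too, consistent with the "directed" clause of the definition.
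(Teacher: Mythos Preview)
Your overall plan is sound, and the computation $\det(w_i,w_{i+1})=\det(u_i,u_{i+1})$ (the order of the vertex $v_{i+1}$) is correct in the directed case. The gap is in step~(iii), where you conclude that $\mathrm{Conv}\{0,w_i,w_{i+1}\}$ has no interior lattice point. Both justifications you offer fail. The claim that $w_i,w_{i+1}$ ``span a smooth cone'' is false: take $v_i=(a,-1)$, $v_{i+1}=(1,0)$, $v_{i+2}=(b,1)$, all with consecutive determinants equal to $1$ after your subdivision; then $w_i=(1,a-1)$, $w_{i+1}=(1,1-b)$ and $\det(w_i,w_{i+1})=2-a-b$, which can be any integer. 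The alternative ``run the same subdivision argument on the $w$-side'' is circular: subdividing the $v$-loop further only inserts repeated $w$'s and never reduces $|\det(w_i,w_{i+1})|$ at a genuine vertex, while subdividing the $w$-loop presupposes the no-interior condition you are trying to prove. The sentence in your second paragraph (``the dual triangles are unimodular hence have no interior lattice point'') makes the same false leap.

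What is needed is weaker than unimodularity: by Pick, the triangle $\mathrm{Conv}\{0,w_i,w_{i+1}\}$ (with $w_i,w_{i+1}$ primitive) has no interior lattice point iff $|\det(w_i,w_{i+1})|=\gcd(w_{i+1}-w_i)$. The key observation you are missing is that legality of the $v$-loop forces $\langle w_i,v_{i+1}\rangle=\langle w_{i+1},v_{i+1}\rangle=1$. Indeed $\langle w_i,v_{i+1}\rangle$ is the lattice distance from $0$ to the line through $v_i$ and $v_{i+1}$, which equals $|\det(v_i,v_{i+1})|/\gcd(v_{i+1}-v_i)$, and the no-interior condition on the $v$-triangle says precisely that this ratio is $1$. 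It follows that $w_{i+1}-w_i$ is an integer multiple $k\cdot Rv_{i+1}$ of the primitive vector $Rv_{i+1}$; then $\det(w_i,w_{i+1})=k\,\det(w_i,Rv_{i+1})=k\langle w_i,v_{i+1}\rangle=k$ while $\gcd(w_{i+1}-w_i)=|k|$, so the Pick criterion holds. This argument works uniformly for non-directed loops (take ``outward'' to mean $\langle w_i,v_i\rangle>0$ for each triangle separately, with no global sign convention needed) and shows at once that the outward primitive normal to $\{w_i,w_{i+1}\}$ is $v_{i+1}$, giving the involutivity for free.
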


\begin{figure}[h]
\begin{center}
\includegraphics[scale=0.6]{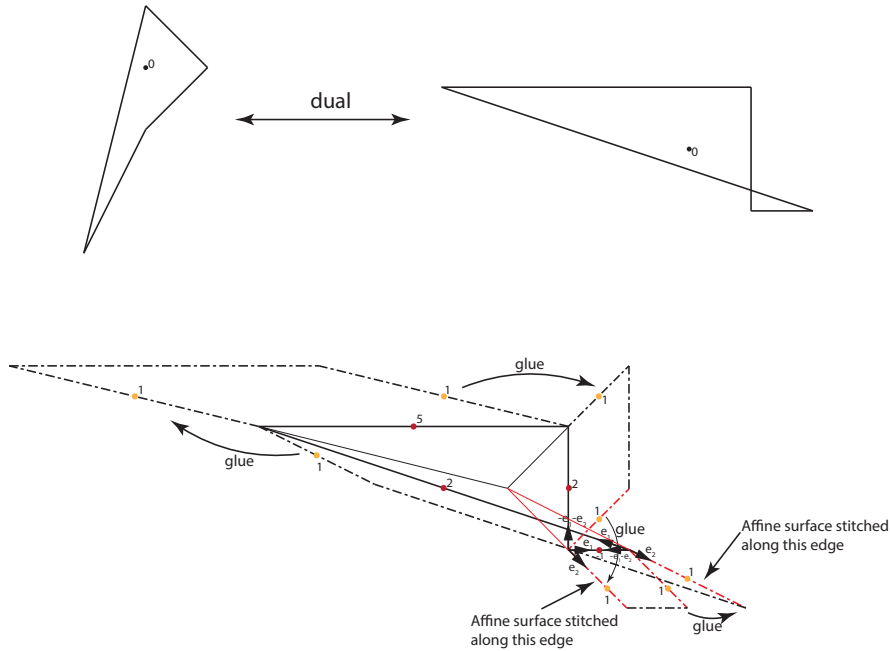}
\caption{A dual pair of legal loops.  The bottom figure shows a stitched affine manifold which is dual to the affine manifold given in the left of Figure \ref{fig:ell-non-Fano-eg1}.}
\label{fig:legal-loop}
\end{center}
\end{figure}

\begin{figure}[h]
\begin{center}
\includegraphics[scale=0.6]{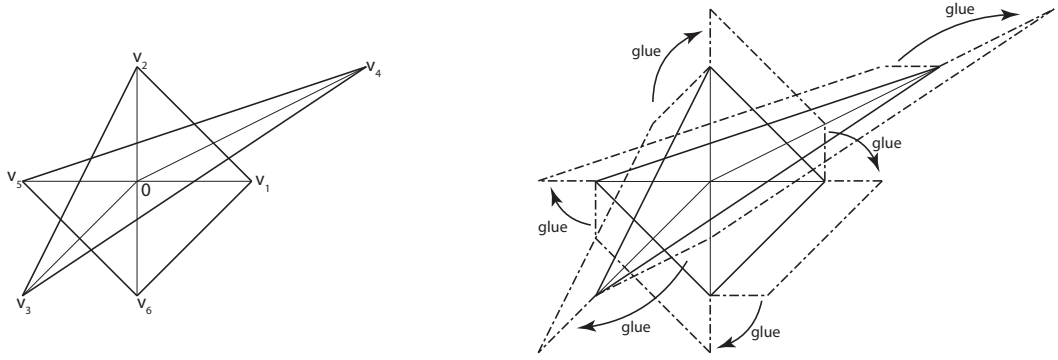}
\caption{An example of a legal loop which is directed and has winding number $2$.  It is associated with an integral affine manifold (with the origin removed).}
\label{fig:legal-loop-2}
\end{center}
\end{figure}

\begin{remark}
As shown in Figure \ref{fig:legal-loop}, the dual of a directed legal loop could be non-directed.
\end{remark}

The `12' Property for legal loops involves winding numbers.  It is stated as follows.

\begin{theorem}[\cite{PR}] \label{thm:12-legal}
For a legal loop with winding number $w \in \Z$ around the origin,
$$ \sum_{i\in\Z/m\Z} \det(v_i,v_{i+1}) + \sum_{i\in\Z/m\Z} \det (u_{i-1},u_{i}) = 12 w $$
where $u_i$ is the outward primitive normal vector of the triangle $\{0,v_i,v_{i+1}\}$.
\end{theorem}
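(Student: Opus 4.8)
The plan is to prove Theorem \ref{thm:12-legal} topologically, in the spirit of the generalized elliptic-surface picture of Section \ref{sec:surf}: I would realize the legal loop (together with its dual legal loop, which exists by the Proposition above) as the combinatorial data of a smooth torus fibration $\pi\colon M\to\bS^2$ on a closed oriented $4$-manifold $M$, and then evaluate the characteristic number $-\tfrac12 p_1(M)=-\tfrac32\sigma(M)$ in two ways: once by summing the local contributions of the singular fibers, which reproduces the left-hand side of the identity, and once via Matsumoto's signature analysis of torus-fibered $4$-manifolds \cite{Matsumoto}, which gives $12w$. The equality of the two evaluations is the theorem.

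\textbf{Construction of $M$.} First I would build $M$. The legal loop $\{v_1,\dots,v_m\}$ together with its dual legal loop assemble, exactly as $(\cA,\cP)$ and $(\cA',\cP')$ are glued along their boundary affine circles in Proposition \ref{prop:not-conv}, into an integral affine structure with singularities on a closed surface $B$; counting the triangle and parallelogram pieces shows $B\cong\bS^2$, with the winding number $w$ of the loop encoded in the affine monodromy (for $w=1$ this is precisely the construction of Section \ref{sec:surf}, and the extra winding illustrated in Figure \ref{fig:legal-loop-2} is incorporated without changing the topology of $B$). Writing $\Delta\subset B$ for the singular set and $\Lambda^*\subset T^*(B-\Delta)$ for the integral lattice, the symplectic manifold $T^*(B-\Delta)/\Lambda^*$ compactifies over $\Delta$ to a torus fibration $\pi\colon M\to\bS^2$ on a closed oriented $4$-manifold; each singular point of multiplicity $d$ carries a singular fiber with local monodromy conjugate to $\begin{pmatrix}1 & d \\ 0 & 1\end{pmatrix}$, and by construction $\Delta$ splits into the ``outer'' points, of multiplicities $\det(v_i,v_{i+1})$, and the ``inner'' points, of multiplicities $\det(u_{i-1},u_i)$, either of which may be negative when the loop makes a clockwise move.

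\textbf{The two evaluations.} The Pontryagin number of a torus fibration over a surface is a sum of local contributions concentrated at the singular fibers (through Meyer's signature cocycle together with the Hirzebruch signature theorem $p_1=3\sigma$, or equivalently through $\eta$-invariants of the boundary torus bundles), and the local contribution to $-\tfrac12 p_1(M)$ of a singular fiber of multiplicity $d$ is precisely $d$; the normalization is pinned down by the basic case of a rational elliptic surface, where the $12$ nodal fibers give $-\tfrac12 p_1=-\tfrac32\sigma=-\tfrac32(-8)=12$, consistent with Proposition \ref{prop:12}. Summing over $\Delta$ yields $-\tfrac12 p_1(M)=\sum_i\det(v_i,v_{i+1})+\sum_i\det(u_{i-1},u_i)$, the left-hand side. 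On the other hand, Matsumoto's classification of torus-fibered $4$-manifolds \cite{Matsumoto} computes $\sigma(M)=-8w$ for a torus fibration over $\bS^2$ whose monodromy representation $\pi_1(\bS^2-\Delta)\to\SL(2,\Z)$ has winding number $w$ (the direct generalization of $\sigma(E(n))=-8n$). Hence $-\tfrac12 p_1(M)=-\tfrac32\sigma(M)=12w$, and comparing the two evaluations proves the identity.

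\textbf{Main obstacle.} The hard part will be the sign bookkeeping. For a directed legal loop with positive winding number this is just the $w$-fold version of the toric computation in Proposition \ref{prop:12} and is essentially routine; but for a non-directed loop one must verify that a clockwise move (a negative $\det(v_i,v_{i+1})$, or a non-convex corner with negative $\det(u_{i-1},u_i)$) really does correspond to a singular fiber whose contribution to $-\tfrac12 p_1(M)$ is negative, i.e.\ that the orientation reversal of the affine chart is faithfully transmitted through $T^*(B-\Delta)/\Lambda^*$ to Matsumoto's local term, and that the combinatorial winding number of the loop agrees on the nose with the monodromy-theoretic $w$ in the signature formula, including the cases $w\le 0$ where the identity asserts a cancellation. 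Establishing this compatibility, rather than either evaluation in isolation, is the heart of the argument. As a remark, one could instead invoke the original proof of \cite{PR}, in which the left-hand side is identified with $12$ times the degree appearing in the valence formula for a suitable modular form; the route above has the advantage of staying within the affine-fibration framework of this paper.
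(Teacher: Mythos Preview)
Your strategy is essentially the paper's own: build a torus fibration over $\bS^2$ out of the legal loop, compute the signature once as a sum of local contributions at the singular fibers (giving the left-hand side), and once via Matsumoto's structure theorem (giving a multiple of $12$), then match the two. Two points deserve sharpening.

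First, the construction. You propose to glue the affine disc of the loop to that of its \emph{dual} loop along their boundary circles. The paper does not do this; instead it builds a single ``folded integral affine space'' from the loop alone (triangles plus parallelograms, with a codimension-one fold locus $S$ wherever $\det(v_{i-1},v_i)$ and $\det(v_i,v_{i+1})$ change sign), verifies that the monodromy around the centre $0$ is trivial, and then caps off with trivial fibrations at $0$ and at $\infty$. Both the $\det(v_i,v_{i+1})$ and the $\det(u_{i-1},u_i)$ contributions already appear as multiplicities of singular points in this \emph{single} folded affine surface, so the dual loop is not needed. Your gluing-with-the-dual is a different model; if you pursue it you must check that it does not double-count (inner singularities of $\cA$ and outer singularities of $\cA'$ for the dual loop carry the same data), and you must still confront the folding phenomenon, since the dual of a directed loop can be non-directed.

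Second, the identification with $w$. Matsumoto's theorem, as the paper invokes it, gives only that the total space is $V_k\#l(\bS^2\times\bS^2)$ or $\bar V_k\#l(\bS^2\times\bS^2)$ with signature $\mp 8k$ for some integer $k$; it does not by itself say $k=w$. You assert $\sigma(M)=-8w$ directly, but this is exactly the step that requires argument. The paper closes this gap by an additivity trick: concatenating legal loops adds both their winding numbers and their signatures, so it suffices to show $k=0$ when $w=0$; in that case the fibration is pulled back from a contractible base, hence has vanishing signature. You flag this identification as the ``main obstacle'' but do not indicate how to resolve it; the concatenation-plus-nullhomotopy argument is the missing idea.
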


Given a legal loop, we can associate it with a `folded' affine space.

\begin{defn}
A folded integral affine space is a smooth manifold $B_0$ (of dimension $n$) equipped with the following:
\begin{enumerate}
\item A codimension-1 submanifold $S \subset B_0$ (which may not be connected);
\item An integral affine structure on the closure of each connected component of $B_0-S$ (which is a manifold with boundary);
\item An isomorphism between the two integral affine structures along $S$ (which locally divides $B_0$ into two half-spaces).
\end{enumerate}
\end{defn}

Note that each of the integral affine structures at a point of $S$ in the above definition is the intersection of a lattice with a half-space, and the isomorphisms of lattices are required to preserve the half-spaces (and hence `fold' the half-spaces to each other).

To construct $B_0$, we do a similar construction as in the proof of Proposition \ref{prop:not-conv}: take the triangles with corners $0,v_i,v_{i+1}$, and the parallelograms with corners $0,v_i,v_{i-1}-v_i,v_{i-1}$.  Take the \emph{disjoint union} of these polygons, and then identify the sides $\{0,v_i\}$ among the triangles, identify the sides $\{0,v_i\}$ and $\{v_{i}-v_{i+1},v_{i}\}$ of the parallelograms, and identify the sides $\{v_i,v_{i-1}\}$ of the triangles and the sides $\{0,v_{i-1}-v_i\}$ of the parallelograms.  Finally we remove the points $0$ of the triangles, the mid-points of the sides $\{0,v_i\}$ and $\{0,v_{i-1}-v_i\}$ of the parallelograms.  This gives $B_0$ as an oriented surface.  

$S$ is defined to be the union of those sides $\{0,v_i\}$ of the triangles and $\{0,v_i\}$ of the parallelograms where $\det(v_{i-1},v_i)$ and $\det(v_i,v_{i+1})$ have different signs.  In particular if the legal loop is directed, $S = \emptyset$ and we do not have any folding.

As in Proposition \ref{prop:not-conv}, we have a similar local structure at each vertex, namely edges incident to the vertex are identified with rays in $\R^2$, and faces incident to the vertex are identified with cones in $\R^2$.  However they do not form a fan in general, namely some cones can intersect at their interior.  

The rays at the vertex $v_i$ of the triangles are generated by the four vectors: $v_i$, $-v_i$, the primitive vector along $v_{i+1}-v_i$ and that along $v_{i-1}-v_i$.  At the vertex $v_i$ of a parallelogram, the vector $-v_i$ is mapped to $-e_2$; the primitive vector in the direction $v_{i-1}-v_i$ and that in the direction $v_{i+1}-v_i$ are mapped to $\mathrm{sgn}(\det(v_{i-1},v_i)) e_1$ and $-\mathrm{sgn}(\det(v_{i-1},v_i)) e_1$ respectively.  This gives the structure of a folded integral affine space.  For directed legal loops this gives usual integral affine surfaces.  The proof of the following is similar to Proposition \ref{prop:not-conv} and hence omitted.

\begin{lemma}
The monodromy around $0$ is trivial.  The multiplicities of the singular points on the sides $\{0,v_i\}$ and $\{0,v_{i-1}-v_i\}$ of the parallelograms are given by $\det(u_{i-1},u_i)$ and $\det(v_{i-1},v_i)$ respectively, where $u_i$ is the outward primitive normal vector of the triangle $\{0,v_i,v_{i+1}\}$.
\end{lemma}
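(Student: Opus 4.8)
The plan is to follow the proof of Proposition \ref{prop:not-conv} almost verbatim, now in the folded setting, making the two local computations at the parallelogram vertices and then invoking the `12' Property for legal loops (Theorem \ref{thm:12-legal}) only implicitly since the lemma does not assert a count. First I would observe that the monodromy around $0$ is a composition of the affine identifications around the punctured point $0$: the edges incident to $0$ in the triangles are glued cyclically, and by construction the fan structure at $0$ (inherited from the triangle corners, where it is trivial) simply patches the charts together with the identity transition on the directions $v_i \mapsto v_i$; going once around returns every tangent vector to itself, so the monodromy is trivial. In the folded case one must check that crossing a wall $\{0,v_i\}$ in $S$ (where $\det(v_{i-1},v_i)$ and $\det(v_i,v_{i+1})$ differ in sign) is still realized by an affine identification preserving the relevant half-space, which is exactly clause (3) of the definition of a folded integral affine space, so no extra monodromy is introduced there either.

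Next I would compute the multiplicity of the singular point on a side $\{0,v_{i-1}-v_i\}$ of the parallelogram with corners $0,v_i,v_{i-1}-v_i,v_{i-1}$. The relevant loop runs from the vertex $0$ (or equivalently $v_{i-1}-v_i$) around that singular point and back, passing through the two vertices of the parallelogram adjacent to that edge and into the neighbouring triangle. Transporting the monodromy-invariant direction along the singular edge is immediate; transporting the complementary primitive vector across, using the stated identifications $-v_i \mapsto -e_2$ and the direction of $v_{i+1}-v_i$ mapped to $\pm e_1$ according to $\mathrm{sgn}(\det(v_{i-1},v_i))$, one reads off that the off-diagonal entry of the monodromy matrix is $\det(v_{i-1},v_i)$ (up to the sign convention fixed by orientation). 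This is the same bookkeeping as in Proposition \ref{prop:not-conv}; the only new feature is keeping track of the sign data built into the folded structure, which is precisely what makes negative multiplicities possible. An entirely parallel computation at the side $\{0,v_i\}$ of the parallelogram, now involving the outward normals $u_{i-1},u_i$ of the two adjacent triangles (these are the directions playing the role of $v_i$ in the dual picture), yields multiplicity $\det(u_{i-1},u_i)$.

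The main obstacle, such as it is, is not any hard estimate but careful sign- and orientation-tracking: one must fix a global orientation on $B_0$, choose a consistent side of each wall in $S$, and verify that the local charts at the parallelogram vertices listed in the paragraph before the lemma genuinely assemble into a folded integral affine structure with the stated identifications, so that the two monodromy transports are computed with compatible conventions. Once that is pinned down, the rest is the routine $2\times 2$ matrix computation already carried out in Proposition \ref{prop:not-conv}. Hence, as the excerpt says, I would carry out the argument by direct analogy and omit the full details.
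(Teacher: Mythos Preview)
Your proposal is correct and matches the paper's approach exactly: the paper omits the proof entirely, stating only that it is ``similar to Proposition~\ref{prop:not-conv} and hence omitted,'' and your plan is precisely to carry out that analogy, with the additional care about signs in the folded setting. There is nothing to add.
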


\begin{prop}[Torus fibration associated with legal loop]
Each legal loop is associated with a smooth torus fibration over $\bS^2$ with simple nodal singular fibers (with signs $\pm 1$).  Each vertex $v_i$ corresponds to $|\det(u_{i-1},u_i)|$ singular fibers of $\mathrm{sign}=\mathrm{sgn}(\det(u_{i-1},u_i))$ where $u_i$ is the outward primitive normal vector of the triangle $\{0,v_i,v_{i+1}\}$.  Each edge $\{v_i,v_{i+1}\}$ corresponds to $|\det(v_i,v_{i+1})|$ singular fibers of $\mathrm{sign}=\mathrm{sgn}(\det(v_i,v_{i+1}))$.  These are all the singular fibers.
\end{prop}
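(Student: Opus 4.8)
The plan is to run the construction behind Proposition \ref{prop:not-conv} in reverse: starting from the folded integral affine surface $B_0$ already attached to the legal loop, first refine its discriminant into simple points, then build the total space by gluing a local torus‑fibration model over each simple point and over the folding locus, and finally compactify the base to a sphere. For the refinement, recall from the Lemma preceding the statement that the singular point on the parallelogram side $\{0,v_i\}$ (the side dual to the vertex $v_i$) has multiplicity $\det(u_{i-1},u_i)$, and the one on the side $\{0,v_{i-1}-v_i\}$ (dual to the edge $\{v_{i-1},v_i\}$) has multiplicity $\det(v_{i-1},v_i)$. Exactly as in Section \ref{sec:A_k-res}, but now keeping track of signs, I replace a singular point of multiplicity $m$ by $|m|$ simple singular points, each with monodromy conjugate to $\left(\begin{array}{cc} 1 & \mathrm{sgn}(m) \\ 0 & 1 \end{array}\right)$, lying along the original monodromy‑invariant segment, by a local refinement of the polyhedral decomposition together with fan structures matching the prescribed monodromies. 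Call the resulting folded affine surface $B_0'$ and its (now simple) discriminant $\Delta'$; for a directed legal loop the folding locus $S$ is empty and $B_0'$ is an honest integral affine surface.

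Next I would assemble the fibration. Over $B_0' - \Delta'$ take the semi‑flat torus bundle $T(B_0'-\Delta')/\Lambda$. Over a neighborhood of a simple singular point of sign $+1$ glue in the standard local model of a single nodal (focus–focus) fibre, and over one of sign $-1$ glue in the ``anti‑node'' obtained from it by reversing orientation in the fibre direction, whose induced base monodromy is $\left(\begin{array}{cc} 1 & -1 \\ 0 & 1 \end{array}\right)$; both local models are smooth (no longer Lagrangian) torus fibrations whose base affine structure agrees with $B_0'$ near $\Delta'$, so the gluing is by a fibrewise diffeomorphism. Across $S$ the two adjacent semi‑flat pieces are glued by the prescribed lattice isomorphism; since it preserves the half‑spaces it identifies the two $T^2$‑bundles compatibly, giving a genuine smooth $T^2$‑fibration over $B_0' - \Delta'$ — it is precisely the presence of folds and of negative nodes that forces us out of the almost‑toric/symplectic world. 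Finally I compactify the base: the puncture at $0$ fills in because the monodromy around it is trivial, and a collar of the outer affine boundary circle of $B_0'$ is glued to the standard piece as in the proof of Proposition \ref{thm:ell}, capping it with a torus fibre; since $B_0' \cup \Delta' \cup \{0\}$ is a closed disc, the result is a smooth torus fibration over $\bS^2$. The correspondence of singular fibres with vertices and edges, together with their signs, is then exactly the bookkeeping of the refinement step combined with the Lemma.

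The step requiring the most care is the global consistency of the compactification: one must verify that the base closes up to $\bS^2$ and that the $T^2$‑bundle extends over the capping disc, i.e. that the product of all local node monodromies around $\Delta'$ (together with the trivial monodromy around $0$) is conjugate to the identity, so that the capping model of Proposition \ref{thm:ell} can be attached. Concretely this holds because the monodromy around the outer boundary of $B_0$ is computed directly from the affine structure and is (conjugate to) the identity — the same fact that legitimizes the $\pi_0$‑gluing in Proposition \ref{thm:ell}. A secondary point is the compatibility of a fold with any simple node lying on it, which can occur when $S$ meets $\Delta'$; this is a local statement about the folded semi‑flat model near such a point. I would \emph{not} reprove $\sum_{i\in\Z/m\Z}\det(v_i,v_{i+1}) + \sum_{i\in\Z/m\Z}\det(u_{i-1},u_i) = 12w$ here, as that is Theorem \ref{thm:12-legal}; but once the fibration over $\bS^2$ is in hand, Matsumoto's formula \cite{Matsumoto} for $-p_1/2$ of a torus fibration over $\bS^2$ reproves it topologically, which is the cross‑check mentioned in the introduction.
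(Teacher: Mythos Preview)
Your proposal is correct and follows essentially the same route as the paper: refine the folded affine base so that all discriminant points are simple, take the semi-flat torus bundle on each component, glue across the folding locus by the prescribed lattice isomorphism, insert the local nodal model (or its sign-reversed version) at each simple point, and then cap off at $0$ and at infinity using triviality of the monodromy there. The only cosmetic differences are that the paper uses $T^*U/\Lambda^*$ rather than $TB_0'/\Lambda$ and describes the negative-sign model concretely as the fibration \eqref{eq:fib_An'} with the first coordinate negated; your additional remarks on the global monodromy check at the outer boundary and on possible fold--node incidences are points the paper leaves implicit.
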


\begin{proof}
As in Section \ref{sec:A_k-res}, we can resolve or smooth out the $A_k$ singularities such that all of them become simple.  Denote the corresponding folded integral affine space by $(\tilde{B}_0,\tilde{S})$.  Take the torus bundle $T^*U/\Lambda^*$ where $U$ is a component of $\tilde{B}_0-\tilde{S}$ and $\Lambda$ denotes the integral structure in the tangent bundle.  The torus bundles for various $U$ are glued together according to the isomorphism along $S$.  This gives a torus bundle over $B_0$.  Then at each singular point with positive (or negative) sign, we glue in the fibration given by Equation \eqref{eq:fib_An'} (or multiplying the first component of Equation \eqref{eq:fib_An'} by $(-1)$).  This matches the monodromies of the singular points.  Now we have a torus fibration over an annulus.  Finally since the monodromy around $0$ and the outer boundary is trivial, we can glue in a trivial fibration around $0$ and $\infty$ to obtain a fibration over $\bS^2$.  This gives the fibration with the specified properties.
\end{proof}


\begin{proof}[A topological proof of the `12' Property (Theorem \ref{thm:12-legal})]
By a classical result of Matsumoto \cite[Theorem 1.1']{Matsumoto}, the total space of a good torus fibration is topologically either $V_k \# l (\bS^2 \times \bS^2)$ or $\bar{V}_k \# l (\bS^2 \times \bS^2)$, where $V_k$ is an elliptic surface with Euler characteristic $12k$.  The signature is to $8k$ and $-8k$ respectively.  On the other hand, the signature is equal to $(-2/3) (k_+ - k_-)$ where $k_+$ (or $k_-$) are the total numbers of positive (or negative) simple singular fibers.  Thus we see that $\sum_{i\in\Z/m\Z} \det(v_i,v_{i+1}) + \sum_{i\in\Z/m\Z} \det (u_{i-1},u_{i}) = 12 k$.  

To prove that $k$ equals the winding number $w$, it suffices to see that $k=0$ when $w=0$ (since we can concatenate legal loops).  When $w=0$, the torus fibration can be identified with the pulled back of a fibration over a contractible space, and hence is homotopic to a trivial bundle.  It follows that the signature is zero.
\end{proof}


\section{Orbi-conifold transitions of Schoen's Calabi-Yau mirror pairs} \label{sec:Schoen}

Schoen's Calabi-Yau threefold \cite{Schoen} is (a resolution of) a fiber product of two rational elliptic surfaces over $\bP^1$.
In Section 9 of \cite{CM2}, Castano-Bernard and Matessi constructed conifold transitions of the Schoen's Calabi-Yau from the fan polytopes of toric blow-ups of $\bP^2$ (whose dual polytopes have standard vertices).

In this section we give a generalization of their construction.  Namely we construct orbi-conifold transitions of the Schoen's Calabi-Yau threefold and its mirror.  (Orbi-conifold points naturally occur when the vertices of a polygon are not standard.)   It gives compact examples of mirror pairs for orbi-conifolds.

Conceptually Schoen's Calabi-Yau degenerates to orbifolded conifolds as follows.  Take two elliptic surfaces with $A$-type singularities, and take their fiber product over $\bP^1$.  Suppose there is a point in $\bP^1$ such that both elliptic surfaces have singularities over that point.  Correspondingly the fiber product has an orbifolded conifold singularity.  In the following we use affine geometry to understand it more clearly.  The Gross-Siebert mirror (obtained by taking discrete Legendre transform) has generalized conifold singularities.

\subsection{General formulation}
We have explained local generalized and orbifolded conifolds in Section \ref{sec:SYZ}.  Below we define a notion which includes both singularities in a global geometry.
\begin{defn}
A complex (or symplectic) orbi-conifold is a topological space $X$ together with a discrete subset $S \subset X$ such that $X-S$ is a complex (or symplectic) orbifold, and for each $p \in S$ we have a homeomorphism of a neighborhood of $p$ to a neighborhood of the singular point in a local orbifolded or generalized conifold given in Section \ref{sec:SYZ}, and the homeomorphism is orbi-complex (or orbi-symplectomorphic) away from $p$.  A topological orbi-conifold is defined similarly without the complex (or symplectic) structure.
\end{defn}

\begin{defn}
A symplectic orbi-conifold transition of a symplectic manifold $X$ is another symplectic manifold $Y$ which is a resolution of a topological orbi-conifold $X_0$, where $X_0$ is a degeneration of $X$.  (The reverse process is also called an orbi-conifold transition.)
\end{defn}

\begin{remark}
A more natural definition should require $X_0$ to be a symplectic orbi-conifold.  Below we only construct $X_0$ as a topological orbi-conifold.  
To make it symplectic, we should use the technique of \cite{AAK} by Moser argument to construct a Lagrangian fibration on the local generalized conifold.
\end{remark}

Now we consider the affine setting.  The following definition is an orbifold generalization of \cite[Definition 6.3]{CM2}.

\begin{defn} \label{def:orbi-cfd}
An affine orbi-conifold is a polarized tropical threefold $(B,\cP,\phi)$ with the following properties.  
\begin{enumerate}
\item The discriminant locus $\Delta$ is a graph with vertices of valency 3 or 4.
\item Each edge of $\Delta$ has monodromy given by the matrix $\left(\begin{array}{ccc}
1 & 0 & 0 \\
k & 1 & 0 \\
0 & 0 & 1 \\
\end{array}\right)$
 in a suitable basis.
\item Every trivalent vertex of $\Delta$ has a neighborhood which is integral affine isomorphic to the orbifolded positive or negative vertex given in Section \ref{sec:orb_vert}. 
\item Every 4-valent vertex of $\Delta$ has a neighborhood which is integral affine isomorphic to the affine generalized or orbifolded conifolds given in Section \ref{sec:aff-loc}.
\end{enumerate}
\end{defn}

Since the affine generalized conifold point and the affine orbifolded conifold point are locally dual to each other (see Figure \ref{fig:loc-orb-cfd}), we immediately have the following.

\begin{prop} \label{prop:gen-dual-orb}
Let $(B,\cP,\phi)$ be an affine orbi-conifold, and $(\check{B},\check{\cP},\check{\phi})$ its Legendre dual.  Then the affine generalized conifold points of $(B,\cP,\phi)$ are in one-to-one correspondence with the affine orbifolded conifold points of $(\check{B},\check{\cP},\check{\phi})$, and vice versa.
\end{prop}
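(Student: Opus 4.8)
The plan is to reduce the statement to the local model computations of Section~\ref{sec:aff-loc}, using that the discrete Legendre transform is a \emph{local} operation. Recall that $(\check B,\check\cP,\check\phi)$ is built cell by cell: to a maximal cell $\sigma$ one assigns the Newton polytope of $\phi|_\sigma$, to a vertex $v$ the normal fan of the associated polytope, and the dual polarization $\check\phi$ records on each cone of that normal fan the corresponding vertex of the Newton polytope --- exactly as in the verification that the affine $A_{k-1}$ models $(B,\cP,\phi)$ and $(\check B,\check\cP,\check\phi)$ are Legendre dual. Consequently a neighborhood of a vertex of $\cP$, together with the cells containing it, determines and is determined by a neighborhood of the dual vertex of $\check\cP$, and an integral-affine isomorphism between two such local data induces one between their Legendre duals. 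The discriminant locus is unchanged as an abstract graph under the transform (only its affine charts change), so $4$-valent vertices of $\Delta$ correspond bijectively to $4$-valent vertices of $\check\Delta$, and likewise for trivalent vertices and edges.

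First I would record the local duality already observed in Section~\ref{sec:aff-loc} (see Figure~\ref{fig:loc-orb-cfd} and the remark after Proposition~\ref{prop:mono-match}): the polarized tropical manifold modeling the affine generalized conifold and the one modeling the affine orbifolded conifold are Legendre dual to each other, as one checks directly from the explicit prisms and rectangles and the piecewise linear function~\eqref{eq:phi-gc}. I would also note that these two local models are not integral-affine isomorphic, since their monodromy representations around the $4$-valent vertex differ by Proposition~\ref{prop:mono-match} together with Propositions~\ref{prop:mon-gencfd} and~\ref{prop:mon-orbcfd}; hence the two types of $4$-valent vertex in Definition~\ref{def:orbi-cfd}(4) are mutually exclusive.

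Then the proposition follows. Given a $4$-valent vertex $p$ of the discriminant locus of an affine orbi-conifold $(B,\cP,\phi)$, Definition~\ref{def:orbi-cfd}(4) provides an integral-affine isomorphism of a neighborhood of $p$ with one of the two local models; applying the discrete Legendre transform, locality and functoriality yield an integral-affine isomorphism of a neighborhood of the dual vertex $\check p$ with the Legendre dual model, i.e.\ with the orbifolded conifold model if $p$ was a generalized conifold point and with the generalized conifold model if $p$ was an orbifolded conifold point. Since the discrete Legendre transform of $(\check B,\check\cP,\check\phi)$ is again $(B,\cP,\phi)$, the map $p \mapsto \check p$ is a bijection with inverse $\check p \mapsto p$, establishing both one-to-one correspondences. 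The only nontrivial point is the locality and functoriality of the discrete Legendre transform; this is implicit in the Gross-Siebert construction and routine to unwind, and it is the step I would write out with the most care.
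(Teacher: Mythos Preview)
Your proposal is correct and follows essentially the same approach as the paper: the paper simply states that the proposition ``immediately'' follows from the fact that the local affine generalized conifold and orbifolded conifold models are Legendre dual to each other (Figure~\ref{fig:loc-orb-cfd} and the discussion after Proposition~\ref{prop:mono-match}), and you have spelled out that reasoning in more detail, including the bijection on $4$-valent vertices and the involutivity of the discrete Legendre transform.

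One small point worth tightening: Definition~\ref{def:orbi-cfd}(4) only demands an \emph{integral-affine} isomorphism with the local model, not an isomorphism of the full polarized tropical data $(\cP,\phi)$, whereas the discrete Legendre transform depends on $(\cP,\phi)$. So the cleanest way to carry the type of a $4$-valent vertex across the duality is not literally ``functoriality of the transform under integral-affine isomorphism'' but rather the observation you already make: the type is detected by the monodromy pattern around the vertex (Propositions~\ref{prop:mon-gencfd}, \ref{prop:mon-orbcfd}, \ref{prop:mono-match}), and under Legendre duality the monodromy passes to the inverse-transpose via $T^*B_0 \cong T\check B_0$, which interchanges the two patterns. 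This is a cosmetic fix and does not affect the substance of your argument.
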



By gluing the local models of fibrations given in Section \ref{sec:SYZ} and \ref{sec:loc-aff}, we obtain a global orbi-conifold.  Combining with Proposition \ref{prop:gen-dual-orb}, we have the following.

\begin{prop}[Mirror symmetry of singularities] \label{prop:top-orb-cfd}
Given an affine orbi-conifold $(B,\cP,\phi)$, there exists a topological orbi-conifold $X$ and a torus fibration to $B$ with section whose discriminant locus and monodromies agrees with that of $B$.  Let $\check{X}$ be the topological orbi-conifold corresponding to the Legendre dual $(\check{B},\check{\cP},\check{\phi})$.  Then the generalized conifold points of $X$ are in one-to-one correspondence with the orbifolded conifold points of $\check{X}$, and vice versa.
\end{prop}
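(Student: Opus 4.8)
The plan is to run the gluing construction of Casta\~no-Bernard and Matessi \cite{CM1,CM2} with the orbifolded local models of Sections \ref{sec:SYZ} and \ref{sec:loc-aff} substituted for the simple ones. First I would start from the semi-flat piece $T^*B_0/\Lambda^*$ over $B_0=B-\Delta$, which carries a torus fibration with its canonical zero section and whose monodromy around each edge of $\Delta$ is, by hypothesis, of the prescribed affine $A_{k-1}$ type. Over a tubular neighborhood of each edge of $\Delta$ — which by Remark \ref{rem:edge} is a product of an affine $A_{k-1}$ singularity with $\R$ — glue in the local torus fibration on $\{uv=(1+z)^k\}\times\C^\times$ given by Equation \eqref{eq:fib_An'} times the trivial $\C^\times$-factor. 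Over a neighborhood of each trivalent vertex glue in the fibration on the toric orbifold $\C^3/G$ or on its mirror as in Section \ref{sec:orb_vert}; over a neighborhood of each $4$-valent vertex glue in the Lagrangian fibration on $G_{k,l}$, respectively $O_{k,l}$, from Section \ref{sec:SYZ}, according to whether the vertex is an affine generalized or an affine orbifolded conifold point. All overlaps lie in $B_0$, where the transition maps are forced by the integral affine structure, so the gluing is well defined once one checks that the discriminant loci and boundary monodromies of the inserted models match those of the ambient affine manifold; this is precisely Lemma \ref{lem:disc-locus}, Propositions \ref{prop:mon-gencfd}, \ref{prop:mon-orbcfd} and \ref{prop:mono-match}, together with the monodromy computations for the orbifolded trivalent vertices in Section \ref{sec:orb_vert} and for the affine $A_{k-1}$ model in Section \ref{sec:loc-aff}.

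Next I would check that the resulting space $X$ is a topological orbi-conifold with a global section. Away from the finite set $S$ of points lying over the $4$-valent vertices, $X$ is covered by products of smooth torus fibrations and by the orbifold charts $\C^3/G$ coming from the orbifolded trivalent vertices and orbifolded edges, so $X-S$ is a topological orbifold; and by construction a neighborhood of each point of $S$ is homeomorphic to a neighborhood of the singular point of the corresponding $G_{k,l}$ or $O_{k,l}$, which is the model required in the definition of an orbi-conifold. The zero section of $T^*B_0/\Lambda^*$ extends, since each local model used above carries a compatible section (a toric Lagrangian section, respectively the image of the real locus), agreeing with the zero section on overlaps. As noted after Definition \ref{def:orbi-cfd}, without the Moser argument of \cite{AAK} the fibration near the generalized conifold points is only piecewise smooth, so I only claim the topological statement, which is what the proposition asserts.

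Finally, for the mirror correspondence I would apply the same construction to the discrete Legendre dual $(\check B,\check\cP,\check\phi)$ to obtain $\check X$ with its torus fibration over $\check B$. By Proposition \ref{prop:gen-dual-orb}, the $4$-valent vertices of $B$ that are affine generalized conifold points correspond bijectively to the $4$-valent vertices of $\check B$ that are affine orbifolded conifold points, and symmetrically. Since the local model of $X$ over an affine generalized conifold vertex contains exactly one generalized conifold point of $X$, and the local model of $\check X$ over an affine orbifolded conifold vertex contains exactly one orbifolded conifold point of $\check X$ (and conversely), this vertex bijection transports to the desired one-to-one correspondence between the generalized conifold points of $X$ and the orbifolded conifold points of $\check X$, and vice versa.

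The main obstacle is the consistency of the gluing in the first step: one must verify that the genuinely orbifold local pieces — the orbifolded trivalent vertices and the orbifolded $A_{k-1}$ edges — patch together along overlaps in $B_0$ into a single space carrying a torus fibration that realizes the prescribed global monodromy representation, i.e.\ that passing from the simple to the orbifolded setting does not break the scheme of \cite{CM1}. Once the monodromies around every edge of $\Delta$ are seen to agree on both sides of each overlap, the construction of \cite{CM1} goes through essentially unchanged, and the rest of the argument is bookkeeping.
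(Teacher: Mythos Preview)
Your proposal is correct and follows exactly the approach the paper intends: the paper itself offers no detailed proof, only the sentence ``By gluing the local models of fibrations given in Section~\ref{sec:SYZ} and~\ref{sec:loc-aff}, we obtain a global orbi-conifold. Combining with Proposition~\ref{prop:gen-dual-orb}, we have the following,'' and you have faithfully unpacked what that gluing entails and which monodromy checks are needed. Your identification of the main obstacle (compatibility of the orbifolded local pieces along overlaps in $B_0$) is also the right point to flag.
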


\begin{defn}
Given an affine orbi-conifold $(B,\cP,\phi)$, a smoothing of $(B,\cP,\phi)$ is a simple and positive polarized tropical threefold $(\tilde{B},\tilde{\cP},\tilde{\phi})$ such that the corresponding manifold $\tilde{X}$ is a (topological) smoothing of the orbi-conifold $X$.  
A resolution of $(B,\cP,\phi)$ is a simple and positive polarized tropical threefold such that its Legendre dual is a smoothing of the Legendre dual $(\check{B},\check{\cP},\check{\phi})$.
\end{defn}

Given a simple and positive polarized tropical threefold $(B,\cP,\phi)$, \cite{CM2} constructed a symplectic manifold with a Lagrangian fibration to $B$.   In particular an affine smoothing or resolution of an affine orbi-conifold corresponds to a symplectic manifold.  This gives a way to produce orbi-conifold transition via affine geometry.

\begin{remark}
Similar to Definition \ref{def:orbi-cfd}, we can allow vertices with higher valencies and define the notion of an affine variety with Gorenstein singularities by using the local models given in Section \ref{sec:Gor}.  The same technique can be applied to construct more general geometric transitions.
\end{remark}


\subsection{Examples of compact CY orbi-conifolds}

The following is a more precise formulation of the first part of Theorem \ref{thm:main}.

\begin{theorem}
Each pair of reflexive polygons $(P_1,P_2)$ corresponds to a compact orbifolded conifold $O^{(P_1,P_2)}$ and a compact generalized conifold $G^{(P_1,P_2)}$.  They have the following properties.
\begin{enumerate}
\item $O^{(P_1,P_2)}$ and $G^{(P_1,P_2)}$ contain orbifolded loci which are locally modeled by (an open subset of) $\C^2/\Z_k \times \C^\times \supset \{0\} \times \C^\times$.  
\item The components of the orbifolded loci of $O^{(P_1,P_2)}$ (or $G^{(P_1,P_2)}$) are in one-to-one correspondence with the non-standard corners of $P_i$ and edges of $P_i$ with affine length greater than one ($i=1,2$).  The multiplicity $k$ equals the index of the non-standard corner in the first case and equals the affine length of the edge in the second case.
\item $O^{(P_1,P_2)}$ is a degeneration of a Schoen's CY.  $G^{(P_1,P_2)}$ is a blow-down of a mirror Schoen's CY.
\end{enumerate}
\end{theorem}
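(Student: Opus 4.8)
The plan is to realize both $O^{(P_1,P_2)}$ and $G^{(P_1,P_2)}$ as the topological orbi-conifolds attached, via Proposition~\ref{prop:top-orb-cfd}, to an affine orbi-conifold assembled from the affine rational elliptic surfaces of Proposition~\ref{prop:not-conv}, thereby extending the construction of Section~9 of \cite{CM2} (which only used fan polytopes of toric blow-ups of $\bP^2$) to arbitrary reflexive polygons. Recall that Schoen's Calabi--Yau is (a resolution of) the fibre product $S_1\times_{\bP^1}S_2$ of two rational elliptic surfaces. I would fix a Lagrangian $\bS^1$-fibration $\bP^1\to I=[0,1]$; composing with the elliptic fibration $S_{P_i}\to\bP^1$ gives, on the level of affine bases, a projection $\cA_{P_i}\to I$ whose generic fibre is an interval and whose two boundary arcs sweep out the affine circle $\partial\cA_{P_i}$. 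After rescaling so that $\partial\cA_{P_1}$ and $\partial\cA_{P_2}$ have equal affine length, set
$$B \;=\; B^{(P_1,P_2)} \;=\; \cA_{P_1}\times_I\cA_{P_2},$$
a compact affine $3$-manifold with singularities carrying the product toric polyhedral decomposition $\cP$ and the multivalued convex function $\phi$ induced from the functions on the two factors in Proposition~\ref{thm:ell}. Then $O^{(P_1,P_2)}$ is the topological orbi-conifold of Proposition~\ref{prop:top-orb-cfd} associated to $(B,\cP,\phi)$, and $G^{(P_1,P_2)}$ is the one associated to the analogous manifold built from the primed surfaces $\cA'_{P_i}$; by Proposition~\ref{thm:ell} these two constructions are interchanged by discrete Legendre duality composed with $P_i\mapsto\check P_i$, which explains the appearance of $(\check P_1,\check P_2)$ in Theorem~\ref{thm:main}.

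Next I would check that $(B,\cP,\phi)$ is an affine orbi-conifold in the sense of Definition~\ref{def:orbi-cfd}. Its discriminant locus is the union of the preimages of the singular points of the two factors. A singular point of $\cA_{P_i}$ over a base point $t_0\in I$ that is not a singular value of the other factor sweeps out an edge of $\Delta$ of multiplicity $k$, with the monodromy of item (2) by Remark~\ref{rem:edge}; a singular point sitting at a corner of $\partial\cA_{P_i}$ produces a trivalent vertex which one identifies with an orbifolded positive or negative vertex of Section~\ref{sec:orb_vert}; and a base point $t_0$ over which both factors are singular produces a $4$-valent vertex whose neighbourhood, after matching the two local $A$-type monodromies (Proposition~\ref{prop:mono-match}) with the product structure, is integral affine isomorphic to the affine orbifolded (resp.\ generalized) conifold of Section~\ref{sec:aff-loc}. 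Properties (1) and (2) are then immediate: by Proposition~\ref{thm:ell}(3) the affine $A_{k-1}$ singularities of $\cA_{P_i}$ are in bijection with the non-standard corners of $P_i$ (of multiplicity equal to the index) and with the edges of affine length $>1$ (of multiplicity equal to the affine length), and an $A_{k-1}$ point crossed with the interval direction is precisely the affine base of a Lagrangian fibration on $\C^2/\Z_k\times\C^\times\supset\{0\}\times\C^\times$ described at the end of Section~\ref{sec:loc-aff}.

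For property (3) I would smooth the affine orbi-conifold: apply the fibrewise resolution/smoothing of Section~\ref{sec:A_k-res} to each factor, turning $\cA_{P_i}$ into the affine base of a genuine smooth rational elliptic surface, and then perform the local conifold moves of Section~\ref{sec:SYZ} (the passages $O_{k,l}$ to $\tilde O_{k,l}$, $G_{k,l}$ to $\hat G_{k,l}$, etc.) at the $4$-valent vertices. The result is a positive and simple polarized tropical threefold, hence by the reconstruction of \cite{GS1} and its symplectic counterpart \cite{CM1,CM2} it is the affine base of a smooth compact Calabi--Yau threefold; since its polyhedral data is visibly the (resolved) fibre product of the data of the two smooth rational elliptic surfaces, this threefold is a Schoen Calabi--Yau, so $O^{(P_1,P_2)}$ is a degeneration of one. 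The generalized case is dual: the resolution of $G^{(P_1,P_2)}$ is Legendre dual to a smoothing of $O^{(\check P_1,\check P_2)}$, hence mirror to a Schoen Calabi--Yau, and it is a small resolution contracting the rational curves created at the conifold points (cf.\ Theorem~\ref{localSYZ}), so $G^{(P_1,P_2)}$ is the corresponding blow-down of a mirror Schoen Calabi--Yau.

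The main obstacle is the $4$-valent vertex analysis. One must verify that the local affine geometry cut out inside $B=\cA_{P_1}\times_I\cA_{P_2}$ at a base point where both elliptic surfaces degenerate coincides, as an integral affine manifold with toric polyhedral decomposition and multivalued convex function, with the standard local orbifolded or generalized conifold of Section~\ref{sec:aff-loc} --- including the global consistency of the fan structures at neighbouring vertices and of $\phi$, which is exactly where the simplicity condition must be relaxed. The same vertex-by-vertex bookkeeping is needed once more after smoothing, to confirm that the resulting tropical threefold is indeed simple and positive and that its Gross--Siebert reconstruction is a Schoen threefold rather than merely some Calabi--Yau.
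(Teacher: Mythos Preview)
Your central construction---setting $B=\cA_{P_1}\times_I\cA_{P_2}$ as a fibre product over an interval---does not produce the right affine base.  Topologically, the fibre product of two discs over an interval (each projecting with generic fibre an interval, boundary circle mapping two-to-one onto $I$) is a $3$-ball, not $\bS^3$.  The paper instead glues $\cA'_{P_1}\times(\R/L_2\Z)$ to $(\R/L_1\Z)\times\cA'_{P_2}$ along their common torus boundary $(\R/L_1\Z)\times(\R/L_2\Z)$; this is the genus-one Heegaard splitting of $\bS^3$, and no rescaling of the boundary circles is required since $L_1$ and $L_2$ enter as the two independent factors.  The underlying intuition---that the fibre product of two elliptic surfaces over $\bP^1$ should descend to a fibre product of affine bases over the moment interval of $\bP^1$---is attractive but fails: there is no affine map $\cA_{P_i}\to I$ through which the Lagrangian fibration and the elliptic fibration simultaneously factor, and even granting one, the fibre product has the wrong topology.

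Several details downstream of this are then also off.  The paper attaches the \emph{primed} surfaces $\cA'_{P_r}$ to $O^{(P_1,P_2)}$ and the unprimed $\cA_{\check P_r}$ to its Legendre dual $G^{(\check P_1,\check P_2)}$, the reverse of your assignment.  In the paper's $(B,\cP)$ the discriminant locus consists of $m_1+m_2$ disjoint circles (from the inner singular points) together with an $(m_1,m_2)$ grid of circles on the gluing torus meeting only in the $m_1 m_2$ four-valent orbifolded-conifold vertices; there are no trivalent vertices, and none arise ``at corners of $\partial\cA_{P_i}$'' since the boundary circles carry no singular points.  Once the Heegaard gluing is in place, the remainder of your outline---matching neighbourhoods of the four-valent vertices with the local models of Section~\ref{sec:aff-loc}, then smoothing via the fibrewise procedure of Section~\ref{sec:A_k-res}---agrees with what the paper does.
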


Note that $O^{(P_1,P_2)}$ is mirror to $G^{(\check{P}_1,\check{P}_2)}$ (rather than $G^{(P_1,P_2)}$) where $\check{P}$ denotes the dual polygon of $P$.


The construction goes as follows.  The two reflexive polygons $P_1$ and $P_2$ are associated with two affine rational elliptic surfaces $\cA_r'$, $r=1,2$, with $A_k$ singularities (see Proposition \ref{prop:not-conv} for the notations).  Let $L_r$ be the affine lengths of the boundary of the polygons respectively.  We shall glue the affine threefolds $\cA_1' \times (\R/L_2\Z)$ and $(\R/L_1\Z)\times \cA_2'$ along their boundaries $\partial \cA_1' \times (\R/L_2\Z) \cong (\R/L_1\Z)\times \partial\cA_2' \cong (\R/L_1\Z) \times (\R/L_2\Z)$.  Then we get an affine $\bS^3$ with singularities, which corresponds to an orbifolded conifold degeneration of the Schoen's Calabi-Yau threefold.

\begin{remark}
If we glue them in the other way, namely $\partial \cA_1' \times (\R/\Z) \cong \partial\cA_2'\times (\R/\Z)$ with rescaling if necessary, then we get an affine $\bS^2 \times \bS^1$ which corresponds to $(K3 \textrm{ with } A_k \textrm{ singularities}) \times \bT^{2}$.
\end{remark}

For instance, take $P_1$ to be the polygon given in the second graph of the last row of Figure \ref{fig:ell-tor}, and $P_2$ the second graph of the first row of Figure \ref{fig:ell-tor}.  The gluing is given in Figure \ref{fig:SchoenCY-whole}. 

\begin{figure}[h]
\begin{center}
\includegraphics[scale=0.6]{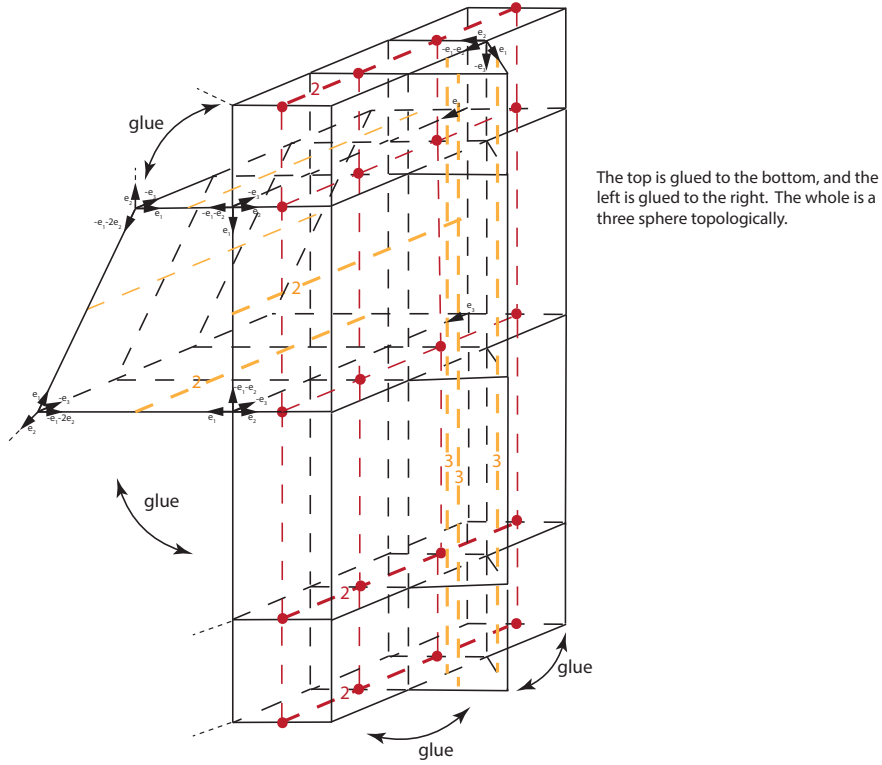}
\caption{An orbifolded conifold degeneration of Schoen's Calabi-Yau.  Each thick dot represents an orbifolded conifold singularity.}
\label{fig:SchoenCY-whole}
\end{center}
\end{figure}

The polyhedral decomposition consists of the following polytopes.  Let $P_r$ ($r=1,2$) consist of $m_r$ edges with affine lengths $l_k^{(r)}$ for $k=1,\ldots,m_r$ (the labeling of edges is in counterclockwise order).  Then $\sum_{k=1}^{m_r} l_k^{(r)} = L_r$.  We have the polytopes 
\begin{equation} \label{eq:hatP1}
\hat{P}_j^{(1)}:=[0,l_j^{(2)}] \times P_1
\end{equation} 
for $j=1,\ldots,m_2$,
\begin{equation} \label{eq:hatP2}
\hat{P}_i^{(2)}:=[0,l_i^{(1)}] \times P_2 
\end{equation}
for $i=1,\ldots,m_1$, and the cubes 
\begin{equation} \label{eq:C}
C_{ij}:=[0,l_i^{(1)}] \times [0,l_j^{(2)}] \times [0,1] 
\end{equation}
for $i=1,\ldots,m_1,j=1,\ldots,m_2$. 
The facet $\{0\} \times P_r$ (for $r=1,2$) of $\hat{P}_k^{(r)}$ is glued with the facet $\{l_{k-1}^{(r)}\} \times P_r$ of $\hat{P}_{k-1}^{(r)}$ for $k\in\Z/m_r\Z$.  The facet $\{0\} \times [0,l_j^{(2)}] \times [0,1]$ of $C_{ij}$ is glued with the facet $\{l_{i-1}^{(1)}\} \times [0,l_j^{(2)}] \times [0,1]$ of $C_{i-1,j}$ for $i\in\Z/m_1\Z$; the facet $[0,l_i^{(1)}] \times \{0\} \times [0,1]$ of $C_{ij}$ is glued with the facet $[0,l_i^{(1)}] \times \{l_{j-1}^{(2)}\} \times [0,1]$ of $C_{i,j-1}$ for $j\in\Z/m_2\Z$.  The facet $[0,l_i^{(1)}] \times [0,l_j^{(2)}] \times \{0\}$ of $C_{ij}$ is glued with the facet $[0,l_j^{(2)}] \times \partial_i P_1$ of $\hat{P}_j^{(1)}$ (where $\partial_i P_1$ denotes the $i$-th edge of $P_1$), and the facet $[0,l_i^{(1)}] \times [0,l_j^{(2)}] \times \{1\}$ of $C_{ij}$ is glued with the facet $[0,l_i^{(2)}] \times \partial_j P_2$ of $\hat{P}_i^{(2)}$.  The resulting total space is $\bS^3$ topologically.

Now we describe the fan structure at each vertex.  Every vertex of the polyhedral decomposition is either a vertex of $\hat{P}_i^{(1)}$ (for $i=1,\ldots,m_1$) or that of $\hat{P}_j^{(2)}$ (for $j=1,\ldots,m_2$).  It is always in the shape of a product of the fan of $\bP^1$ and that of a weighted projective plane.  Then the fan structure is taken to be the product of the fan of $\bP^1$ and the fan at the corresponding vertex of the affine rational elliptic surface $\cA_r$.

The discriminant locus $\Delta$ is taken to be the union of the lines 
\begin{equation} \label{eq:Delta_1}
[0,l_j^{(2)}]\times\{p\} \subset \partial \hat{P}_j^{(1)}
\end{equation}
for $j=1,\ldots,m_2$ and $p$ being the mid-point of an edge of $P_1$,
\begin{equation} \label{eq:Delta_2}
[0,l_i^{(1)}]\times\{p\} \subset \partial \hat{P}_i^{(2)}
\end{equation} 
for $i=1,\ldots,m_1$ and $p$ being the mid-point of an edge of $P_2$, 
\begin{equation} \label{eq:Delta_3} 
\{0\} \times [0,l_j^{(2)}] \times \{1/2\} \subset \partial C_{ij}
\end{equation}
and
\begin{equation} \label{eq:Delta_4}
[0,l_i^{(1)}] \times \{0\} \times \{1/2\} \subset \partial C_{ij}
\end{equation}
for $i=1,\ldots,m_1$ and $j=1,\ldots,m_2$.

This gives $(B,\cP)$.  One can directly verify the following and we omit the standard computations here.

\begin{lemma}
The polyhedral decomposition and the fan structures at vertices define an affine structure on $B-\Delta$.  The multiplicities of the discriminant loci defined by Equation \eqref{eq:Delta_1},\eqref{eq:Delta_2} are the lengths of the edges of $P_2$ and $P_1$ respectively; the multiplicities for Equation \eqref{eq:Delta_3}, \eqref{eq:Delta_4} are the orders of the $i$-th vertex of $P_1$ and the $j$-th vertex of $P_2$ respectively.  (The $i$-th vertex is adjacent to the $(i-1)$-th and $i$-th edges.)
\end{lemma}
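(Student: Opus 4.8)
The plan is to reduce both assertions to the local models already established. By construction every vertex of $\cP$ is a vertex of some $\hat{P}_i^{(1)}$ or $\hat{P}_j^{(2)}$, and there its fan is the product of the fan of $\bP^1$ in the interval direction with the fan of the affine rational elliptic surface $\cA_r$ in the remaining two directions; moreover every face identification in \eqref{eq:hatP1}--\eqref{eq:C} is, after a change of coordinates, a product of an identification of the $\bP^1$-factor and an identification of the $\cA_r$-factor. Hence the compatibility needed to define an integral affine structure on $B-\Delta$ follows from the corresponding compatibility for $\cA_1$ and $\cA_2$ (Proposition~\ref{prop:not-conv}) together with the trivial compatibility for $\bP^1$; the only place where a genuinely new affine structure appears is along the cubes $C_{ij}$, and there a neighborhood of the $4$-valent locus is the local affine orbifolded conifold of Section~\ref{sec:aff-loc}. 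This is the routine bookkeeping the statement defers.

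For the edges \eqref{eq:Delta_1} and \eqref{eq:Delta_2}: such a segment lies in the boundary of $\hat{P}_j^{(1)}=[0,l_j^{(2)}]\times P_1$ (resp.\ of $\hat{P}_i^{(2)}$), running in the interval direction through the midpoint of an edge of $P_1$ (resp.\ $P_2$), so a small disc transverse to it is contained in a copy of $\cA_1$ (resp.\ $\cA_2$) around an inner singular point, the interval direction being monodromy-invariant. By Proposition~\ref{prop:not-conv}(3) the monodromy is conjugate to $\left(\begin{smallmatrix}1&0&0\\ m&1&0\\ 0&0&1\end{smallmatrix}\right)$ with $m$ the affine length of that edge, giving the asserted multiplicity.

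For the edges \eqref{eq:Delta_3} and \eqref{eq:Delta_4}: these two segments inside a cube $C_{ij}$ cross in a single $4$-valent vertex, and the heart of the proof is to identify a neighborhood of this vertex with the local affine orbifolded conifold $(B,\cP)$ of Section~\ref{sec:aff-loc} whose two depths are the order of the $i$-th vertex of $P_1$ and the order of the $j$-th vertex of $P_2$. Granting this, Proposition~\ref{prop:mono-match} reads off the monodromy matrices $\left(\begin{smallmatrix}1&0&0\\ 0&1&0\\ 0&k&1\end{smallmatrix}\right)$ and $\left(\begin{smallmatrix}1&-l&0\\ 0&1&0\\ 0&0&1\end{smallmatrix}\right)$ around the two components, hence the asserted multiplicities, and also verifies condition (2) of Definition~\ref{def:orbi-cfd}. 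To see that the depths are the vertex orders one tracks the gluings: the bottom facet of $C_{ij}$ is glued along the $i$-th edge of $P_1$ while that of the neighbouring cube $C_{i-1,j}$ is glued along the $(i-1)$-st edge, so passing from one cube to the other across the common facet one "turns" by $\det(u_{i-1},u_i)$, the order of the $i$-th vertex of $P_1$; symmetrically the top facets of $C_{ij}$ and $C_{i,j-1}$ contribute the order of the $j$-th vertex of $P_2$, and the $[0,1]$-direction interpolates between them. For $P_1,P_2$ with standard vertices this is precisely the computation of \cite{CM2}.

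The main obstacle is exactly this last identification: one must check, for every $4$-valent vertex at once and compatibly with the fans already fixed on the two elliptic-surface pieces, that the product-of-fans data at the eight vertices of $C_{ij}$ reassemble into the standard local orbifolded conifold model with the correct pair of depths. This is the orbifold analogue of Proposition~6.4 of \cite{CM2}; the non-convex and non-standard cases need care with orientations and with the basis changes bringing each monodromy to the normal form above, but require no idea beyond Propositions~\ref{prop:not-conv} and \ref{prop:mono-match}. The remaining points (that $\Delta$ is a graph whose only vertices are these $4$-valent ones, the loops of types \eqref{eq:Delta_1}--\eqref{eq:Delta_2} carrying none) are immediate from the description of the gluings.
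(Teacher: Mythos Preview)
The paper gives no proof of this lemma: the sentence preceding it reads ``One can directly verify the following and we omit the standard computations here.'' Your sketch carries out precisely that verification, reducing the edge multiplicities to the two local models already in hand---Proposition~\ref{prop:not-conv} for the segments \eqref{eq:Delta_1}--\eqref{eq:Delta_2} (transverse slices land in $\cA'_{P_r}$ near an inner singular point) and Proposition~\ref{prop:mono-match} for the segments \eqref{eq:Delta_3}--\eqref{eq:Delta_4} (a neighborhood of the $4$-valent point in $C_{ij}$ is the local affine orbifolded conifold). This is the natural route and is correct.

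One remark worth recording: your computation for \eqref{eq:Delta_1} yields the affine length of the relevant edge of $P_1$, and symmetrically \eqref{eq:Delta_2} gives an edge length of $P_2$, whereas the lemma as printed has these swapped. Your answer is the right one: the transverse disc to a segment of type \eqref{eq:Delta_1} sits in the $P_1$-factor, and the paragraph immediately after the lemma confirms that \eqref{eq:Delta_1}--\eqref{eq:Delta_2} correspond to the \emph{inner} singular points of $\cA'_{P_r}$, whose multiplicities by Proposition~\ref{prop:not-conv}(3) are the edge lengths of $P_r$ itself. So the discrepancy is a typo in the lemma statement, not an error in your argument.
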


The discriminant loci defined by Equation \eqref{eq:Delta_1} and \eqref{eq:Delta_2} correspond to the inner singular points of the affine surfaces $\cA_r'$.  (One can smooth out the inner singular points so that all of them are simple (having multiplicity $1$), see the top left of Figure \ref{fig:ell-tor-smoothing}.  Here we simply leave them as orbifolded singularities.)

The discriminant loci defined by Equation \eqref{eq:Delta_3} and \eqref{eq:Delta_4} correspond to the outer singular points of $\cA_r'$.  They intersect at $m_1\cdot m_2$ points, and these are the orbifolded conifold singularities.  For instance in Figure \ref{fig:SchoenCY-whole} there are $12$ orbifolded conifold points, $6$ of which are not the usual conifold points.

By Proposition \ref{prop:top-orb-cfd}, we have a topological orbifolded conifold $O^{(P_1,P_2)}$ corresponding to $(B,\cP)$.

We have the following compact affine generalized conifold $(\check{B},\check{P})$.  It is glued from $\cA^{\check{P}_1} \times (\R/\check{L}_2\Z)$ and $(\R/\check{L}_1\Z) \times \cA^{\check{P}_2}$, where $\check{P}_r$ is the dual polytope of $P_r$ and $\check{L}_r$ is the affine length of its boundary.  See Figure \ref{fig:SchoenCYmir-whole-4sides}.  (In this example $\check{P}_1$ is the same as $P_1$, and $\check{P}_2$ is given by the first graph of the first row of Figure \ref{fig:ell-tor}.)

\begin{figure}[h]
\begin{center}
\includegraphics[scale=0.6]{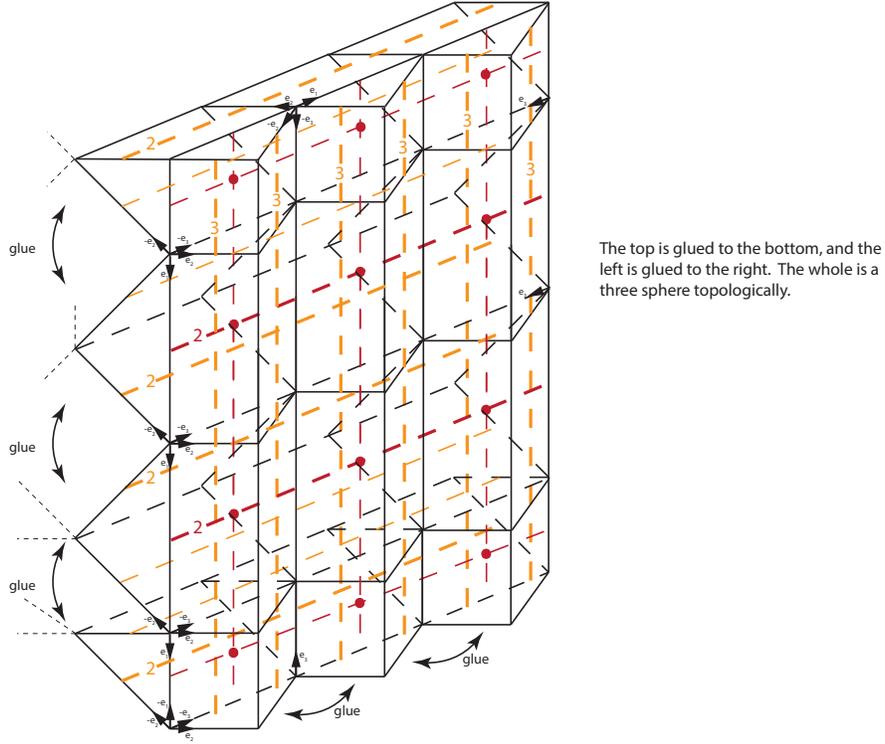}
\caption{The mirror obtained by taking Legendre dual.  Each thick dot represents a generalized conifold singularity.}
\label{fig:SchoenCYmir-whole-4sides}
\end{center}
\end{figure}

$(\check{B},\check{P})$ is obtained by gluing the prisms 
\begin{equation} \label{eq:hatT1}
\hat{T}^{(1)}_{ij}:=[0,\check{l}_j^{(2)}] \times T_i^{(1)} 
\end{equation}
and
\begin{equation} \label{eq:hatT2}
\hat{T}^{(2)}_{ij}:=[0,\check{l}_i^{(1)}] \times T_j^{(2)}
\end{equation}
along their boundaries, where $T_i^{(r)}$ are the triangles formed by the adjacent corners of $\check{P}_r$ for $r=1,2$.  The facet $\{\check{l}_j^{(2)}\} \times T_i^{(1)}$ of $\hat{T}^{(1)}_{ij}$ is glued with the facet $\{0\} \times T_i^{(1)}$ of $\hat{T}^{(1)}_{i,j+1}$; the facet $\{\check{l}_i^{(1)}\} \times T_i^{(2)}$ of $\hat{T}^{(2)}_{ij}$ is glued with the facet $\{0\} \times T_i^{(2)}$ of $\hat{T}^{(2)}_{i+1,j}$.  Denote the edges of the triangle $T_i^{r}$ by $\partial_0 T_i^{(r)}, \partial_1 T_i^{(r)},\partial_2 T_i^{(r)}$ corresponding to the $i$-th side of $\check{P}_r$ and its adjacent vertices  (in counterclockwise order) respectively.  The facet $[0,\check{l}_j^{(2)}] \times \partial_0 T_i^{(1)}$ of $\hat{T}^{(1)}_{ij}$ is glued with the facet $[0,\check{l}_i^{(1)}] \times T_j^{(2)}$ of $\hat{T}^{(2)}_{ij}$ by $\partial_0 T_i^{(1)} \cong [0,\check{l}_i^{(1)}]$ and $\partial_0 T_j^{(2)} \cong [0,\check{l}_j^{(2)}]$; the facet $[0,\check{l}_j^{(2)}] \times \partial_2 T_i^{(1)}$ of $\hat{T}^{(1)}_{ij}$ is glued with the facet $[0,\check{l}_j^{(2)}] \times \partial_1 T_{i+1}^{(1)}$ of $\hat{T}^{(1)}_{i+1,j}$; the facet $[0,\check{l}_i^{(1)}] \times \partial_2 T_j^{(2)}$ of $\hat{T}^{(2)}_{ij}$ is glued with the facet $[0,\check{l}_i^{(1)}] \times \partial_1 T_{j+1}^{(2)}$ of $\hat{T}^{(2)}_{i,j+1}$.  The resulting space is again $\bS^3$ topologically.

The dual discriminant locus $\check{\Delta}$ is given by the union of the lines 
\begin{align}
[0,\check{l}_j^{(2)}]\times\{p\} \subset \partial \hat{T}_{ij}^{(1)}, &\textrm{ $p$ is the mid-point of $\partial_1T_i^{(1)}$ or $\partial_2T_i^{(1)}$}, \label{eq:Deltacheck_1} \\
[0,\check{l}_i^{(1)}]\times \{p\} \subset \partial \hat{T}_{ij}^{(2)}, &\textrm{ $p$ is the mid-point of $\partial_1T_j^{(2)}$ or $\partial_2T_j^{(2)}$}, \label{eq:Deltacheck_2} \\
 [0,\check{l}_j^{(2)}]\times\{p\} \subset \partial \hat{T}_{ij}^{(1)}, &\textrm{ $p$ is the mid-point of $\partial_0T_i^{(1)}$}, \label{eq:Deltacheck_3} \\
[0,\check{l}_i^{(1)}]\times \{p\} \subset \partial \hat{T}_{ij}^{(2)}, &\textrm{ $p$ is the mid-point of $\partial_0T_j^{(2)}$}. \label{eq:Deltacheck_4}
\end{align}

The discriminant loci defined by Equation \eqref{eq:Deltacheck_1} and \eqref{eq:Deltacheck_2} correspond to the inner singular points of the affine surfaces $\cA^{\check{P}_r}$.  
The discriminant loci defined by Equation \eqref{eq:Deltacheck_3} and \eqref{eq:Deltacheck_4} correspond to the outer singular points of $\cA^{\check{P}_r}$.  They intersect at $m_1\cdot m_2$ points, and these are the generalized conifold singularities.  

By Proposition \ref{prop:top-orb-cfd}, we have a corresponding topological generalized conifold $G^{(\check{P}_1,\check{P}_2)}$.  From the definition of $(\check{B},\check{\cP})$, one can write down a multivalued piecewise-linear function $\phi$ on $B$ such that $(\check{B},\check{\cP})$ is the Legendre dual of $(B,\cP,\phi)$.

\begin{remark}
If we take the polytope $P$ to be the moment map polytope of a toric blow-up of $\bP^2$ (or equivalently $\check{P}$ to be the fan polytope of a toric blow-up of $\bP^2$), then $B$ is a usual affine conifold.  This was constructed in \cite[Section 9.4]{CM2}.
\end{remark}

\subsection{Orbi-conifold transitions and mirror symmetry}

From the last subsection, we have a compact orbifolded conifold $O^{(P_1,P_2)}$ and a compact generalized conifold $G^{(P_1,P_2)}$ for each pair of reflexive polygons $(P_1,P_2)$.  In this subsection we construct their smoothings and resolutions in affine geometry.  By the Gross-Siebert reconstruction, they give a mirror pair of toric degenerations.  Due to discrete Legendre transform, a resolution of $O^{(P_1,P_2)}$ is mirror to a smoothing of $G^{(\check{P}_1,\check{P}_2)}$, and vise versa.  This completes the proof of Theorem \ref{thm:main}.

Since a resolution of the affine orbifolded conifold corresponding to $(\check{P}_1,\check{P}_2)$ is Legendre dual to a smoothing of the affine generalized conifold corresponding to $(P_1,P_2)$, and vice versa, it suffices to construct smoothings for $O^{(P_1,P_2)}$ and $G^{(P_1,P_2)}$.

\subsubsection{Smoothing of $O^{(P_1,P_2)}$.}  Recall that we have constructed a smoothing $\tilde{\cA}_{P_r}'$ for the affine elliptic surfaces $\cA_{P_r}'$, see the top right of Figure \ref{fig:ell-tor-smoothing}.  A smoothing of $O^{(P_1,P_2)}$ is given by gluing the two products of an affine circle with $\tilde{\cA}_{P_r}'$ for $r=1,2$.  See Figure \ref{fig:SchoenCY-whole-4sides-smoothing}.  

\begin{figure}[h]
\begin{center}
\includegraphics[scale=0.6]{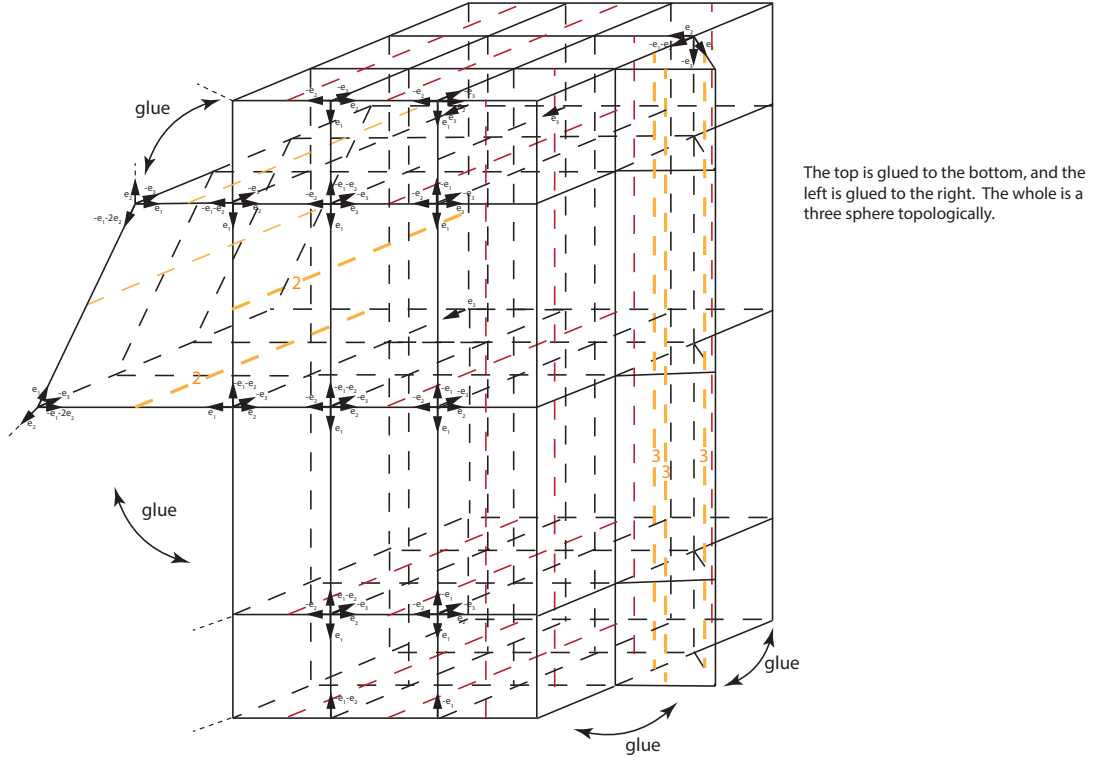}
\caption{A smoothing of an orbifolded conifold, which gives a Schoen's CY.  Its Legendre transform gives a resolution of the mirror generalized conifold (which is an orbi-conifold transition of the mirror Schoen's CY shown in Figure \ref{fig:SchoenCYmir-whole-4sides-smoothing}).}
\label{fig:SchoenCY-whole-4sides-smoothing}
\end{center}
\end{figure}

Let $K_r$, $r=1,2$, be the maximal multiplicity of the outer singular points of $\cA_{P_r}'$.  The polyhedral decomposition consists of $\hat{P}_j^{(1)}$ for $j=1,\ldots,m_2$ defined by Equation \eqref{eq:hatP1}, $\hat{P}_i^{(2)}$ for $i=1,\ldots,m_1$ defined by Equation \eqref{eq:hatP2}, and $(K_1+K_2)$ copies of $C_{ij}$ for $i=1,\ldots,m_1,j=1,\ldots,m_2$ defined by Equation \eqref{eq:C}.  They are glued as shown in Figure \ref{fig:SchoenCY-whole-4sides-smoothing}.  The fan structure at each vertex is taken to be the product of the fan of $\bP^1$ and the fan at the corresponding vertex of $\tilde{\cA}_{P_r}'$.  The discriminant locus is shown in the figure and we omit the detailed descriptions.  This gives a smoothing of $O^{(P_1,P_2)}$.  Its Legendre dual gives a resolution of $G^{(P_1,P_2)}$.

\subsubsection{Smoothing of $G^{(P_1,P_2)}$.} First we glue the products of an affine circle with a smoothing $\tilde{\cA}_{P_r}$ of $\cA_{P_r}$ (recall the bottom right of Figure \ref{fig:ell-tor-smoothing}) for $r=1,2$.  See Figure \ref{fig:SchoenCYmir-whole-4sides-smoothing}.  It consists of prisms $[0,1] \times \tilde{T}_i^{(1)}$ and $[0,1] \times \tilde{T}_j^{(2)}$, where $\tilde{T}_i^{(r)}$ for $r=1,2$ are the standard triangles formed by (the origin and) the adjacent boundary lattice points of $\check{P}_r$.

The resulting space is an affine conifold (with negative nodes).  It can be smoothed out by subdividing the rectangular facets containing the conifold singularities, and refining the polyhedral decomposition correspondingly.  Since all these negative nodes are contained in an affine plane, they can be simultaneously smoothed out by \cite[Theorem 8.7]{CM2}.  This gives a smoothing of $G^{(P_1,P_2)}$.  Its Legendre dual gives a resolution of $O^{(P_1,P_2)}$.  

As a result, we obtain orbi-conifold transitions of the Schoen's CY and its mirror.

\begin{figure}[h]
\begin{center}
\includegraphics[scale=0.6]{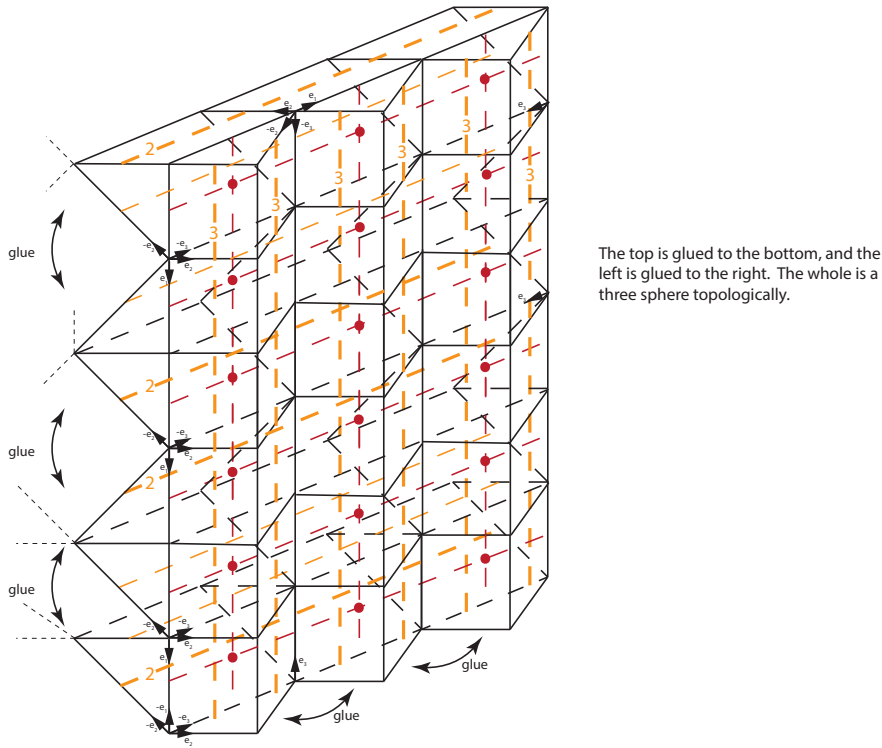}
\caption{A smoothing of the mirror.  The figure has (negative) conifold singularities which can be easily smoothed out by subdividing the $18$ rectangles containing the conifold singularities.  Its Legendre transform gives an orbi-conifold transition of Schoen's CY shown in Figure \ref{fig:SchoenCY-whole-4sides-smoothing}.}
\label{fig:SchoenCYmir-whole-4sides-smoothing}
\end{center}
\end{figure}

\begin{remark}
For convenience, in the above we first smooth out to a conifold, and then take further smoothing of the conifold.
There are other choices of subdivision which do not go through an conifold.  For instance see the left of Figure \ref{fig:loc-orb-con-res}.  In general orbi-conifold degenerations do not factor through ordinary conifold degenerations.
\end{remark}

\bibliographystyle{amsalpha}
\bibliography{geometry}

\end{document}